\documentclass[11pt]{article}

\usepackage[english]{babel}
\usepackage[T1]{fontenc}
\usepackage[utf8]{inputenc}
\usepackage{amsbsy}
\usepackage{amsmath}
\usepackage{amssymb}
\usepackage{amsfonts}
\usepackage{amsthm}
\usepackage{bm}
\usepackage{graphicx}
\usepackage{hyperref}
\usepackage[active]{srcltx}
\usepackage{upgreek}
\usepackage{xcolor}
\usepackage[all]{xy}
\usepackage{dsfont}

\usepackage{comment}

\usepackage{tikz}

\newcommand{\blue}[1]{\color{blue}#1\color{black}}

\newcommand{\red}[1]{\color{red}#1\color{black}}

\newcommand{\N}{\mathbb{N}}

\newcommand{\R}{\mathbb{R}}

\newcommand{\Z}{\mathbb{Z}}

\newcommand{\gA}{\mathfrak{A}}

\newcommand{\ga}{\mathfrak{a}}
\newcommand{\gb}{\mathfrak{b}}
\newcommand{\gB}{\mathfrak{B}}
\newcommand{\gc}{\mathfrak{c}}
\newcommand{\gd}{\mathfrak{d}}

\newcommand{\gu}{\mathfrak{u}}

\newcommand{\grad}{\nabla}

\newcommand{\energyset}{\mathcal{X}(\R)}
\newcommand{\energysethydro}{\mathcal{X}_{hy}(\R)}

\newcommand{\energysethydrok}{\mathcal{X}_{hy}^l(\R)}
\newcommand{\Nenergyset}{\mathcal{NX}(\R)}
\newcommand{\Nenergysethydro}{\mathcal{NX}_{hy}(\R)}
\newcommand{\Nenergysethydrok}{\mathcal{NX}_{hy}^l(\R)}

\newcommand{\Ror}{R^{or}}
\newcommand{\Rhy}{R^{hy}}

\newcommand{\grandOde}[1]{\mathcal{O}\left( #1\right)}

\newcommand{\ntend}{\underset{n\rightarrow +\ii}{\longrightarrow}}

\newcommand{\ttendii}{\underset{t\rightarrow +\ii}{\longrightarrow}}

\newcommand{\tiitendf}{\underset{t\rightarrow +\ii}{\rightharpoonup}}

\newcommand{\tiitendfd}{\overset{d}{\underset{t\rightarrow +\ii}{\rightharpoonup}}}

\newcommand{\ntendfX}{\overset{\mathcal{X}_{hy}}{\underset{n\rightarrow +\ii}{\rightharpoonup}}}

\newcommand{\tiitendfX}{\overset{\mathcal{X}_{hy}}{\underset{t\rightarrow +\ii}{\rightharpoonup}}}

\newcommand{\ii}{\infty}

\newcommand{\adm}{\mathrm{Adm}}
\newcommand{\pos}{\mathrm{Pos}}

\newcommand{\loc}{\mathrm{loc}}

\newcommand{\norm}[2]{\left\Vert #1\right\Vert_{#2}}
\newcommand{\normX}[1]{\left\Vert #1\right\Vert_{\mathcal{X}_{hy}}}
\newcommand{\normLii}[1]{\left\Vert #1\right\Vert_{L^\ii}}
\newcommand{\normLdeux}[1]{\left\Vert #1\right\Vert_{L^2}}

\newcommand{\normHun}[1]{\left\Vert #1\right\Vert_{H^1}}
\newcommand{\psLdeux}[2]{\left\langle #1,#2 \right\rangle_{L^2}}
\newcommand{\psLdeuxLdeux}[2]{\left\langle #1,#2 \right\rangle_{L^2\times L^2}}
\newcommand{\vvvert}[1]{\big\vert\kern-0.25ex\big\vert\kern-0.25ex\big\vert #1\big\vert\kern-0.25ex\big\vert\kern-0.25ex\big\vert}

\newcommand{\normLdeuxLdeux}[1]{\left\Vert #1\right\Vert_{L^2\times L^2}}

\newcommand{\normR}[1]{\left| #1\right|}

\newcommand{\QN}{\mathcal{Q}^{(N)}}

\newtheorem{thm}{Theorem}[section]

\newtheorem{claim}[thm]{Claim}

\newtheorem{lem}[thm]{Lemma}
\newtheorem{prop}[thm]{Proposition}

\newtheorem*{thm*}{Theorem}
\newtheorem*{merci}{Acknowledgments}
\newtheorem{rem}[thm]{Remark}

\theoremstyle{remark}

\title{Construction of a multi-soliton-like solutions for non-integrable Schrödinger equations with non-trivial far field}

\begin{document}

\date{}
\author{
\renewcommand{\thefootnote}{\arabic{footnote}}
Jordan Berthoumieu\footnotemark[1]}
\footnotetext[1]{Institut de Recherche Mathématique Avancée, UMR 7501 Université de
Strasbourg, 7 rue René Descartes 67000 Strasbourg, France. Website: https://berthoumieujordan.wordpress.com \ E-mail: {\tt berthoumieu@unistra.fr}}

\maketitle

\begin{abstract}

This article provides a naturel sequel of previous works~\cite{Bert2,Bert3} regarding the stability of travelling waves for a general one-dimensional Schrödinger equation~\eqref{NLS} with non-zero condition at infinity. The aim of this article is twofold. First, we prove the asymptotic stability of well-prepared chains of dark solitons and secondly, we construct an asymptotic $N$-soliton-like solution, which is an exact solution of~\eqref{NLS}, the large-time dynamics of which is similar to a decoupled chain of solitons.
\end{abstract}

\numberwithin{equation}{section}

\section{Introduction}\label{section: intro}

We are interested in the defocusing nonlinear Schrödinger equation 
\begin{equation}\label{NLS}\tag{$NLS$}
    i\partial_t \Psi +\partial_x^2 \Psi +\Psi f(|\Psi|^2)=0\quad\text{on }\R\times\R,
\end{equation}
with the condition at infinity
\begin{equation}\label{nonvanishing condition à l'infini}\tag{$NVC$}
    |\Psi(t,x)|\underset{|x|\rightarrow +\ii}{\longrightarrow}1.
\end{equation}

The system~\eqref{NLS} endowed with~\eqref{nonvanishing condition à l'infini} should be viewed as a generalization of the Gross-Pitaevskii equation corresponding to the nonlinearity $f(\rho)=1-\rho$. Models such as~\eqref{NLS} with non-trivial far field are relevant in condensed matter physics and arise, in particular, in the context of the Bose-Einstein condensation or of superfluidity (see~\cite{AbiHuMeNoPhTu,Coste1}). They also appear in nonlinear optics to describe the propagation of waves in nonlinear media (see~\cite{KivsLut1,KivPeSt1}).

To keep in compliance with condition~\eqref{nonvanishing condition à l'infini}, we shall assume that the nonlinearity $f$ satisfies the condition $f(1)=0$ and we restrict our attention to the defocusing case, namely
\begin{equation*}\label{f'(1) <0}
    f'(1)<0.
\end{equation*}

The choice of general classes of nonlinearities provides original behaviors as enumerated by D. Chiron in~\cite{Chiron7}, encompassing relevant physics models for modeling repulsive interactions between bosons in Bose-Einstein condensates or for the propagation of waves in anomalous media in the context of optical fibers. In the latter framework, the non-trivial far field~\eqref{nonvanishing condition à l'infini} expresses the presence of a nonzero background. The dynamics of solutions differs drastically from the case of null condition at infinity, in the sense that the resulting dispersion relation is different and that we expect the general dynamics to be governed by scattering for null condition at infinity (see~\cite{GiniVel02} for instance). In contrast, special localized and coherent structures emerge naturally in this nonlinear dispersive equation. They are called travelling waves (or solitons). We also find in the literature the terminology of \textit{dark or gray solitons} since these solutions appear as a local decline of the light intensity in a bright optical fiber.

Generically, dispersive systems seem to be governed by the \textit{soliton resolution conjecture}, according to which a solution with localized and smooth initial data eventually decomposes as a chain of decoupled solitons and an extra dispersive tail, that travels even faster than the soliton in our particular case\footnote{This is a consequence of the dispersion relation~\eqref{dispersion relation}.}. This decomposition was highlighted for several integrable dispersive models, by the inverse scattering method in~\cite{EckhSch1,jenliupersul}, but also by more sophisticated methods, for instance in~\cite{CuccJen1,BorJenMcL} and very recently in~\cite{GassGeraMill1}.

In the sequel, we somehow finish the preliminaries of the study of the \textit{stability} aspects made in the previous articles~\cite{Bert1,Bert2,Bert3}. In fact, we show the asymptotic stability of a well-prepared chain of travelling waves, that is a decoupled sum of travelling waves. Then, we prove the existence of an asymptotic $N$-soliton, which is an exact solution of~\eqref{NLS} that behaves as a decoupled sum of travelling waves of different speeds, also called \textit{chain of well-prepared solitons} (see~\eqref{definition N-solition} below).

Regarding the nonlinearity $f$, it is taken as general as possible. Among all the benefits provided by studying the mechanism of stability for such a wide class of generality, it is of great interest to extend the asymptotic description of solutions to non-integrable systems. In fact, it is expected that non-integrable systems share numerous similarities with integrable systems, especially when it comes to the large-time dynamics of its underling waves and the inclination to decompose according to the soliton resolution conjecture. Only a few results are known for non-integrable systems, but we can cite very recent results such as~\cite{duykemer1,JendLaw2} for the nonlinear wave equation and~\cite{duykemermar,JendLaw1} for the critical wave map equation. In this sense, the assumptions on the nonlinearity are fairly large, in order to cover (in principle) non-integrable systems. In the sequel, we assume that the nonlinearity $f\in C^3(\R_+)$ satifies
\begin{itemize}
    \item For all $\rho\in\R$,
\begin{equation}\label{hypothèse de croissance sur F minorant intermediaire}\tag{H1}
    \dfrac{c_s^2}{4} (1-\rho)^2 \leq F(\rho).
\end{equation}
    \item There exist $M\geq 0$ and $q\in [2, +\ii)$ such that for all $\rho \geq 2$,
\begin{equation}\label{hypothèse de croissance sur F majorant}\tag{H2}
    F(\rho)\leq M|1-\rho|^q.
\end{equation}
    \item \begin{equation}\label{condition suffisante pour la stabilité orbitale sur f''(1)+6f'(1)>0}\tag{H3}
    f''(1)+3f'(1)\neq 0.
\end{equation}
\end{itemize}

We lay emphasis on the wide variety of physically relevant nonlinearities that fall within these assumptions. An interesting class of functions are perturbations of the Gross-Pitaevskii case :
\begin{equation*}
f(\rho)=1-\rho + a(1-\rho)^{2p-1}\quad\text{where }p\in [2,+\ii[\text{ and }0<a< p\left(\dfrac{2p-1}{2p-3}\right)^{2p-3},
\end{equation*}

as well as the pure-power type nonlinearities
\begin{equation*}
f(\rho)=\alpha(1-\rho^\beta)\quad\text{where }\alpha>0\text{ and }\beta >1,
\end{equation*}

or even the saturated nonlinearities
\begin{equation*}
f(\rho)=\dfrac{1}{(1+\gamma\rho)^2}-\dfrac{1}{(1+\gamma)^2}\quad\text{where }\gamma >0.
\end{equation*}

We refer to~\cite{AntHieMar} (see Exemple 1.9) and the references therein for more details on the relevance and the origin of these special nonlinearities from the physics literature. Furthermore, the previous classes appear to be well adapted in the context of Theorem~\ref{théorème: il existe une branche C^1 de solitons proche de c_s}. In these special classes, a more explicit description of the range of existing unique travelling waves can be achieved. We refer to~\cite{Bert2} (see Subsection 1.3) for more explicit comments on these aspects.

\subsection{Dynamical setting}\label{section: dynamical setting}

The Hamiltonian structure of~\eqref{NLS} is given by the generalized Ginzburg-Landau energy,
\begin{equation*}\label{définition de l'energie de Ginzburg Landau}
    E(\Psi):=E_k(\Psi)+E_p(\Psi):=\dfrac{1}{2}\int_\R |\partial_x \Psi|^2+ \dfrac{1}{2}\int_\R F(|\Psi|^2)\quad\text{with }F(\rho):=\int_\rho^1 f(r)dr.
\end{equation*}

The energy is well-defined on the natural energy set 
\begin{equation*}\label{energy space, espace d'energie}
    \mathcal{X}^0(\R)=\energyset:=\big\lbrace \psi\in H^1_{\loc}(\R) \big| \psi' \in L^2 (\R), F(|\psi|^2) \in L^1(\R) \big\rbrace ,
\end{equation*}

and on 
\begin{equation*}\label{energy space, espace d'energie non nul}
    \Nenergyset:=\energyset\cap \big\{ \inf_\R |\psi| >0\big\},
\end{equation*}

endowed with the distance
\begin{equation}\label{métrique d}
    d(\psi_1,\psi_2)=\left\Vert \psi_1-\psi_2\right\Vert_{L^\ii([-1,1])}+\normLdeux{|\psi_1|^2 - |\psi_2|^2}+\normLdeux{\psi_1'-\psi_2'}.
\end{equation}

We mention that the Cauchy problem for~\eqref{NLS} is wellposed in $u_0 + H^1(\R)$ with initial data $u_0$ in $\mathcal{X}(\R)$ and has been handled under several conditions on $f$ that are described below. 
\begin{thm}[\cite{Gallo1}]\label{théorème: local global well-posedness of cauchy problem}
Let $u_0\in \mathcal{X}(\R)$. Take $f$ in $ C^2(\R)$ satisfying \eqref{hypothèse de croissance sur F minorant intermediaire}. In addition, assume that there exist $\alpha_1\geq 1$ and $C_0 >0$ such that for all $\rho\geq 1$, 
\begin{equation}\label{theoreme de gallo condition de croissance sur f''}
|f''(\rho)|\leq \dfrac{C_0}{\rho^{3-\alpha_1}}.\tag{H0}
\end{equation}

If $\alpha_1 > \frac{3}{2}$, assume moreover that there exists $\alpha_2\in [ \alpha_1-\frac{1}{2},\alpha_1]$ such that for $\rho\geq 2$, $C_0 \rho^{\alpha_2} \leq F(\rho)$.\\
There exists a unique function $w \in  C^0\big(\R,H^1(\R)\big)$ such that $u:=u_0+w$ solves \eqref{NLS}, the solution continuously depends on the initial condition, and the energy $E$ is conserved by the flow.
\end{thm}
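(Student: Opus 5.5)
The plan is to follow Gallo's strategy: write $u=u_0+w$ and solve for $w\in C^0(\R,H^1(\R))$ by a fixed point on the Duhamel formulation, then globalize with the conservation laws. The equation for $w$ reads
\begin{equation*}
i\partial_t w+\partial_x^2 w = -\partial_x^2 u_0 - (u_0+w)f(|u_0+w|^2),\qquad w(0)=0.
\end{equation*}
Since $u_0\in\mathcal{X}(\R)$ only, $\partial_x^2u_0$ need not lie in $H^1$. I would first regularize: choose $\tilde u_0\in C^\infty_b$ with $\partial_x\tilde u_0\in H^k$ for all $k$ and $u_0-\tilde u_0\in H^1$ (for instance a convolution with a mollifier, using that $u_0$ is bounded and uniformly continuous because $u_0'\in L^2$ and $F(|u_0|^2)\in L^1$ with \eqref{hypothèse de croissance sur F minorant intermediaire} forces $|u_0|^2-1\in L^2$). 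Replacing $u_0$ by $\tilde u_0$ and $w(0)$ by $u_0-\tilde u_0\in H^1$, the forcing $\partial_x^2\tilde u_0$ now lies in $H^1$.

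\textbf{Local theory.} In one dimension $H^1$ is an algebra embedded in $L^\infty$, and $e^{it\partial_x^2}$ is unitary on $H^1$. I would set up a contraction in $C^0([0,T],H^1)$ on a ball of radius $R$. The map $w\mapsto G(w):=(\tilde u_0+w)f(|\tilde u_0+w|^2)$ must send $H^1$ balls to $H^1$, Lipschitz. The point is that $\tilde u_0\notin L^2$, so I would write
\begin{equation*}
G(w)=\tilde u_0\big(f(|\tilde u_0+w|^2)-f(|\tilde u_0|^2)\big)+wf(|\tilde u_0+w|^2)+\tilde u_0 f(|\tilde u_0|^2).
\end{equation*}
The last term lies in $H^1$ since $f(1)=0$ and $|\tilde u_0|^2-1\in H^1$. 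The other terms are controlled using $f\in C^2$, local Lipschitz bounds on bounded sets in $L^\infty$, and the derivative formula. This gives local existence, uniqueness, continuous dependence and $T=T(R)$, with $R$ bounded in terms of $\|w\|_{H^1}$. Energy conservation follows first for smooth data by direct computation, and then by density and continuity of the flow.

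\textbf{Globalization (main obstacle).} It suffices to bound $\|w(t)\|_{H^1}$ on bounded time intervals. Energy conservation controls $\|\partial_x w\|_{L^2}$ and, by \eqref{hypothèse de croissance sur F minorant intermediaire}, $\||u|^2-1\|_{L^2}$; this yields an $L^\infty$ bound on $u$, since $\big||u|^2-1\big|^{3/2}$ is controlled by $\||u|^2-1\|_{L^2}\|\partial_x|u|^2\|_{L^2}$, modulo bounded $|u|$. The missing piece is $\|w\|_{L^2}$. I would compute
\begin{equation*}
\frac{d}{dt}\|w\|_{L^2}^2 = 2\,\mathrm{Im}\int \bar w\big(\partial_x^2\tilde u_0+G(w)\big),
\end{equation*}
and bound the right-hand side by $C(1+\|w\|_{L^2}^2)$, using the $L^\infty$ bound and a Lipschitz estimate $|G(w)-\tilde u_0f(|\tilde u_0|^2)|\lesssim |w|$ valid on bounded sets; Gronwall then gives exponential growth but no blow-up. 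This is where \eqref{theoreme de gallo condition de croissance sur f''} enters. When $\alpha_1\le 3/2$, $f$ grows slowly enough that the $L^\infty$ bound from the energy is immediate. When $\alpha_1>3/2$, the lower bound $F(\rho)\ge C_0\rho^{\alpha_2}$ is needed so that the energy controls $\|u\|_{L^\infty}$ despite the faster growth of $f$ (and hence of $F$). I expect this $L^\infty$ control in the superquadratic regime to be the delicate step. With it, the uniform $H^1$ bound on compact time intervals lets me iterate the local result, giving global existence in $C^0(\R,H^1)$ and completing the proof.
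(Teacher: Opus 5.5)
The paper does not prove this theorem; it is quoted from Gallo, so there is no in-paper argument to compare yours with. Your outline is a correct self-contained proof in dimension one, and each step works as you describe:
\begin{itemize}
\item the mollified splitting $u_0=\tilde u_0+(u_0-\tilde u_0)$ with $\partial_x^2\tilde u_0\in H^1$;
\item the contraction in $C^0([0,T],H^1)$, based on the algebra property of $H^1(\R)$ and its embedding into $L^\infty$ (this is where $f\in C^2$ is used, to make $w\mapsto(\tilde u_0+w)f(|\tilde u_0+w|^2)$ locally Lipschitz on $H^1$);
\item energy conservation by regularization and density;
\item the Gronwall bound on $\|w(t)\|_{L^2}$.
\end{itemize}

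What you should correct is your account of the hypotheses in the last paragraph, which contradicts what you wrote just before it. In dimension one, the $L^\infty$ bound on $u(t)=\tilde u_0+w(t)$ follows from (H1) and energy conservation alone, for every $\alpha_1$. Note that $u(t)$ is bounded a priori since $w(t)\in H^1$, so $\eta\in H^1$ below. Set $\eta=1-|u|^2$ and use $|\partial_x\eta|\le 2|u||\partial_x u|$ to get
\begin{equation*}
\|\eta\|_{L^\infty}^2\le 2\|\eta\|_{L^2}\|\partial_x\eta\|_{L^2}\le 4\|\eta\|_{L^2}\|\partial_x u\|_{L^2}\big(1+\|\eta\|_{L^\infty}\big)^{1/2}.
\end{equation*}
By (H1), both $\|\eta\|_{L^2}$ and $\|\partial_x u\|_{L^2}$ are bounded in terms of $E(u_0)$, so this inequality closes by itself. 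This is the precise form of your ``modulo bounded $|u|$''.

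A faster growth of $F$ can only strengthen what the energy controls, never weaken it. Once $\|u(t)\|_{L^\infty}\le M$, the estimate $|G(w)-\tilde u_0f(|\tilde u_0|^2)|\lesssim_M|w|$ only uses that $f$ is $C^1$ on $[0,M^2]$, so (H0) does not enter the Gronwall step either.

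So the step you flag as delicate is not delicate. In your argument, (H0) and the lower bound $F(\rho)\ge C_0\rho^{\alpha_2}$ are simply never used. They are inherited from Gallo's more general statement, which also covers higher dimensions where no such $L^\infty$ control is available. Unused hypotheses are harmless, but you should drop the claim that they are needed for the $L^\infty$ bound.
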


Observing moreover that $\mathcal{X}(\R)+H^1(\R)\subset \mathcal{X}(\R)$, we infer that the global well-posedness holds in $\mathcal{X}(\R)$. Note that this property was already highlighted to deal with the Gross-Pitaevskii case in~\cite{Gerard1}.

\subsection{Travelling wave solutions and chain of well-prepared solitons}\label{section: minimizing property of the travelling waves}

The linearization of~\eqref{NLS} around the trivial solution $\Psi\equiv 1$ reduces, in the long wave approximation, to the free wave equation. More precisely, the plane wave solutions of this linearized system satisfy the dispersion relation 
\begin{equation}\label{dispersion relation}
    \omega(\xi)=\pm\sqrt{\xi^4+c_s^2 \xi^2}
\end{equation} with sound speed 
\begin{equation*}\label{relation entre vitesse c_s et f'(1)}
c_s = \sqrt{-2 f'(1)}.
\end{equation*}

This quantity $c_s$ plays a crucial role in the understanding of the travelling waves solution. We focus on solutions of the form $\Psi(t,x)=u(x-ct)$. The profile of which satisfies the ordinary differential equation
\begin{equation}\label{TWC}\tag{$TW_c$}
    -icu'+u''+uf(|u|^2)=0.
\end{equation}

We recall that there exists a smooth continuous branch of travelling waves in the transonic limit, i.e. with speed close to $c_s$, which are asymptotically stable in the following sense.
\begin{thm}[\cite{Bert2,Bert3}]\label{théorème: il existe une branche C^1 de solitons proche de c_s}
    There exist a critical speed $c_{\min} >0$ and an integer $l_*$ such that for $c\in (c_{\min},c_s)$ and $f\in C^{l_*}(\R_+)$ satisfying the hypothesis~\eqref{theoreme de gallo condition de croissance sur f''}-\eqref{condition suffisante pour la stabilité orbitale sur f''(1)+6f'(1)>0}, there exists a non-constant and non-vanishing smooth solution $\gu_c$ of \eqref{TWC}, that is unique up to translations and constant phase shifts. Moreover, the travelling wave of speed $c\in (c_{\min},c_s)$ is asymptotically stable in the following sense.

There exists $\delta_{*} >0$ such that for any $\psi_0\in\energyset$ satisfying $d(\psi_0,\gu_{c^*})\leq \delta_{*}$, the solution $\psi(t)$ associated with the initial condition $\psi_0$ is globally wellposed and there exist $c^*_0\in (c_{\min},c_s) $ and two functions $(b,\theta)\in C^1(\R_+,\R^2)$ such that 
    \begin{equation*}
        e^{-i\theta(t)} \psi\big( t, .+b(t)\big)\tiitendfd \gu_{c^*_0},
    \end{equation*}    
    and
    \begin{equation*}
        b'(t)\ttendii c^*_0\quad\quad\text{ and }\quad\quad \theta'(t)\ttendii 0.
    \end{equation*}

Here the notation $\Psi(t)\tiitendfd\Psi_\ii$ stands for the fact that all three following convergences hold \begin{equation}\label{définition convergence faible d}
\left\{\begin{array}{l}
    1-|\Psi(t)|^2\tiitendf 1-|\Psi_\ii|^2\quad\text{in }L^2(\R),\\
    \partial_x \Psi(t)\tiitendf\partial_x \Psi_\ii\quad\text{in }L^2(\R), \\
    \Psi(t)\ttendii\Psi_\ii\quad\text{in }L^\ii_\loc(\R) . \\
\end{array}
\right.
\end{equation}

\end{thm}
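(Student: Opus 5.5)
This statement is imported from~\cite{Bert2,Bert3}, so the plan is to recall how those two works combine. The first part is existence, uniqueness and smoothness of the branch $c\mapsto \gu_c$ near the sound speed. The second part is asymptotic stability, obtained from orbital stability together with a Liouville-type rigidity result and a Martel--Merle style convergence argument.

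\textbf{Construction of the transonic branch.} Write $u=\rho^{1/2}e^{i\varphi}$ with $\eta=1-|u|^2$; this is legitimate because the solutions are non-vanishing. Then \eqref{TWC} reduces to the scalar equation
\begin{equation*}
\varphi'=\frac{c\,\eta}{2(1-\eta)},
\end{equation*}
coupled with a second-order ODE for $\eta$. That ODE has a first integral, $(\eta')^2=\mathcal{N}_c(\eta)$, where $\mathcal{N}_c$ is explicit in terms of $F$. Setting $\varepsilon^2=c_s^2-c^2$ and rescaling $\eta(x)=\varepsilon^2 N(\varepsilon x)$, the leading order is the KdV soliton equation. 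The coefficient of its quadratic term is proportional to $f''(1)+3f'(1)$, so \eqref{condition suffisante pour la stabilité orbitale sur f''(1)+6f'(1)>0} makes it non-degenerate. An implicit function argument in even $H^1$ functions then yields, for $c$ close to $c_s$, a unique small solution depending smoothly on $c$; the regularity $f\in C^{l_*}$ is what allows enough derivatives in $c$. The phase equation reconstructs $\gu_c$, uniquely up to translation and phase. The threshold $c_{\min}$ is the end of this perturbative window.

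\textbf{Orbital stability.} Along the branch we check the Grillakis--Shatah--Strauss condition $\frac{d}{dc}P(\gu_c)\neq 0$, where $P$ is the renormalized momentum. Its sign follows from the KdV asymptotics, where $P\sim \varepsilon^3$, and again uses \eqref{condition suffisante pour la stabilité orbitale sur f''(1)+6f'(1)>0}. We also need the spectral property that the Hessian of $E-cP$ at $\gu_c$ has exactly one negative direction and kernel spanned by $\partial_x\gu_c$ and $i\gu_c$; this is proved by perturbation from the KdV linearized operator. The outcome is a coercivity estimate modulo modulation. Hence solutions starting $\delta$-close in $d$ stay close to $e^{i\theta(t)}\gu_{c(t)}(\cdot-b(t))$, with modulation parameters that are $C^1$ by the implicit function theorem.

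\textbf{Asymptotic stability.} Take a sequence $t_n\to\ii$ and extract a weak limit in the sense of \eqref{définition convergence faible d}. By weak continuity of the flow in $\energyset$, this limit generates a solution that stays close to the soliton family and is uniformly localized in $x$. A monotonicity (virial-type) identity for the momentum on a half-line moving slower than the soliton but faster than the dispersion gives this localization. The Liouville theorem then states that such a localized solution near $\gu_c$ is exactly a soliton; this uses a Virial estimate on a dual (hydrodynamic) variable to obtain coercivity. Uniqueness of the limit shows that $c(t)$ converges to some $c_0^*$, so that $b'(t)\to c_0^*$ and $\theta'(t)\to 0$.

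The main obstacle is the Liouville rigidity step. It needs a virial functional whose quadratic form is positive near $\gu_c$, and that positivity holds only in the transonic regime, again through the KdV limit.
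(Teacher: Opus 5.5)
The paper does not prove this statement. It imports it from \cite{Bert2,Bert3}, and Section~3 replays the same architecture for chains: modulated orbital stability, weak limit profile, weak continuity of the hydrodynamic flow, monotonicity of a localized momentum, decay of the profile, a Liouville property via a virial estimate, and reconstruction of the phase. Your outline follows this architecture, but your step ``uniqueness of the limit shows that $c(t)$ converges'' is not an argument.

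The Liouville theorem only says that along a sequence $t_n$ with $c(t_n)\to c^\infty$ the limit profile is the soliton of speed $c^\infty$. It allows different sequences to select different $c^\infty$. Even the full-time convergence $\varepsilon(t,\cdot+a(t))\rightharpoonup0$ does not rule out a slow drift of $c(t)$. That convergence does follow, and via the modulation equations it gives $c'(t)\to0$ and $a'(t)-c(t)\to0$, but no more. You need a monotone quantity:
\begin{itemize}
\item The almost-monotonicity of the momentum to the right of $a(t)\pm R$ (the single-soliton analogue of \eqref{monotonie secondaire}) shows that $p_{a(t)+R}(t)$ and $p_{a(t)-R}(t)$ have limits up to $O(e^{-\tau R})$.
\item Their difference equals $p(Q_{c(t)})+O(e^{-\tau' R})+o(1)$, using $\varepsilon\rightharpoonup 0$ and the strong local convergence of $\varepsilon_\eta$. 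Hence $p(Q_{c(t)})$ converges.
\item Strict monotonicity of $c\mapsto p(Q_c)$ then gives $c(t)\to c_0^*$, whence $b'(t)\to c_0^*$ and $\theta'(t)\to 0$.
\end{itemize}
That monotonicity, like the Grillakis--Shatah--Strauss criterion itself, is a sign condition on $\frac{d}{dc}p(Q_c)$, not merely non-vanishing.

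The uniqueness part has a similar gap. The implicit function theorem in the KdV scaling only gives uniqueness among small even solutions near the rescaled KdV profile. The statement claims uniqueness, up to translations and phase shifts, among all non-constant solutions of \eqref{TWC}. The first integral you wrote down closes this. For a finite-energy solution, $\eta\to0$ at infinity forces $(\eta')^2=\mathcal N_c(\eta)$ with $\mathcal N_c(\xi)=4(1-\xi)F(1-\xi)-c^2\xi^2=(c_s^2-c^2)\xi^2+\frac23\big(f''(1)+3f'(1)\big)\xi^3+O(\xi^4)$. So $\eta$ is a homoclinic orbit of $0$ turning at the zero of $\mathcal N_c$ adjacent to $0$. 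For $c$ close to $c_s$ exactly one side of $0$ carries such a simple zero; for instance (H1) gives $\mathcal N_c>0$ on $(-\infty,0)$. This fixes $\eta$ up to translation and the phase up to a constant.

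Two smaller inaccuracies remain in the asymptotic part.
\begin{itemize}
\item The monotonicity formula gives only a time-integrated exponential localization, as in \eqref{monotonie troisieme}, not ``uniform localization''. The virial argument on the dual variable needs a smooth profile with decaying derivatives, as in \eqref{lemme: decaying property in rigidity}. Getting there is a separate smoothing step that your outline skips.
\item Dispersion here is supersonic: group velocities have modulus at least $c_s>c$. So no subsonic line moves ``faster than the dispersion''. The cut-off moves essentially with the soliton, and the flux is positive because $v^2-\mu\eta v+\frac{c_s^2}{4}\eta^2$ is positive definite for $|\mu|<c_s$.
\end{itemize}
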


\begin{rem}\label{remarque: travelling wave onde progressive en 0}
By uniqueness of a travelling wave, up to phase shifts and translations, we can fix two additional constraints. In particular, for a travelling wave of admissible speed $c\in (c_{\min},c_s)$ that can be lifted as $\gu_c=|\gu_c|e^{i\varphi_c}$, we can renormalize it to simplify the study. We assume that the travelling waves are centered, i.e. that for any $c\in (c_{\min},c_s),\min_\R |\gu_c| = |\gu_c(0)|$ and $\varphi_c(0)=0$.
\end{rem}

In view of the previous result, the next step in the stability analysis of solitons is to consider a chain of well-prepared solitons. We\footnote{Henceforth, the notation $\gc,\ga$ is prescribed for $N$-dimensional vectors and script letters for real numbers.} introduce the set of admissible speeds
\begin{equation*}\label{définition ensemble des vitesses admissibles}
    \adm_N:=\big\{\gc:=(c_1,...,c_N)\in (c_{\min},c_s)^N\big| \ c_1 < ...< c_N\big\}.
\end{equation*}
and the set
\begin{equation*}\label{définition ensemble des positions a_k en L}
    \pos_N(L):=\big\{ \ga:=(a_1,...,a_N)\in\R^N\big| a_{k+1}-a_k >L,\ \forall k\in\{1,...,N-1\}\big\},
\end{equation*}

for $L>0$. For $\gc=(c_1,...,c_N)$, $\ga=(a_1,...,a_N)\in\R^N$ and $\Theta\in\R$, and from the polar decomposition of the corresponding travelling waves $\gu_{c_k,a_k} = |\gu_{c_k,a_k}| e^{i\varphi_{c_k,a_k}}=|\gu_{c_k}(.-a_k)| e^{i\varphi_{c_k}(.-a_k)}$, we define a well-prepared chain of solitons/travelling waves as
\begin{equation}\label{definition N-solition}
    \Ror_{\gc,\ga,\Theta}:=\sqrt{1-\sum_{k=1}^N\big(1-|u_{c_k,a_k}|^2\big)}\ \exp\left(i \displaystyle\int_0^x \sum_{k=1}^N \varphi_{c_k,a_k}\right)\exp(i\Theta)
\end{equation}

or, in a more condensed writing, $\Ror_{\gc,\ga,\Theta}:=\sqrt{1-\eta_{\gc,\ga}}\ e^{-i \int_0^x v_{\gc,\ga}}e^{i\Theta}$,
where the hydrodynamical variables read $(\eta_{\gc,\ga},v_{\gc,\ga})=\sum_{k=1}^N (\eta_{c_k,a_k},v_{c_k,a_k})=\sum_{k=1}^N\big(1-|\gu_{c_k,a_k}|^2, - \varphi'_{c_k,a_k}\big)$ (see Subsection~\ref{section: Hydrodynamical setting}). We expect such a configuration to decouple as time gets large. As a matter of fact, the mutual interaction between the travelling waves shall become all the more negligible, since they are initially ordered and apart from each other (as it is illustrated in Figure~\ref{figure: chaine de trois solitons intro}); thus they will drift apart even more as time evolves. One step was already made in this direction in~\cite{Bert2}, in which the orbital stability of such a configuration was demonstrated.

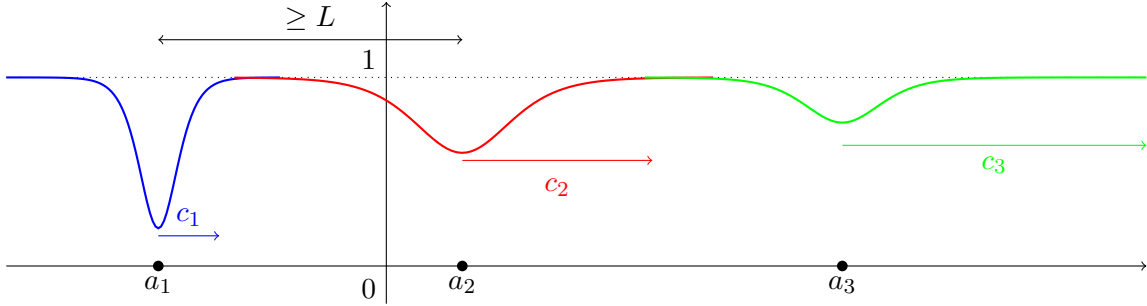
\begin{figure}[h!]
    \centering
    \begin{tikzpicture}

% Axes

\draw[->] (4,0) -- (4,4) node[left] {};
\draw[->] (-1,0.5) -- (14,0.5) node[below] {};

% Ligne y = 1 (pointillée)
\draw[dotted,black] (-1,3) -- (14,3);

% Labels 0 et 1
\node[left] at (4.0,0.2) {$0$};
\node[left] at (4.0,3.23) {$1$};

% Courbes sech^2
\draw[thick,blue,domain=-1:2.6,samples=100] 
  plot (\x,{3-2*(1/cosh(3*(\x-1)))^2}) node[right] {}; % Courbe C_1
\draw[thick,red,domain=2:8.3,samples=100] 
  plot (\x,{3-(-1/cosh(1.2*(\x-5)))^2}) node[right] {};
\draw[thick,green,domain=7.4:14,samples=100] 
  plot (\x,{3-0.6*(1/cosh(1.5*(\x-10)))^2}) node[right] {};

% Points et annotations
\fill (1,0.5) circle (2pt) node[below] {$a_1$};
\fill (5,0.5) circle (2pt) node[below] {$a_2$};
\fill (10,0.5) circle (2pt) node[below] {$a_3$};

%vitesses
\draw[->,blue] (1,0.9) -- (1.8,0.9) node[below] {};
\draw[->,red] (5,1.9) -- (7.5,1.9) node[below] {};
\draw[->,green] (10,2.1) -- (14,2.1) node[below] {};

\node[above] at (1.4,0.9) {\blue{$c_1$}};
\node[above] at (6.25,1.3) {\red{$c_2$}};
\node[above] at (12,1.6) {\textcolor{green}{$c_3$}};

% Distance L
\draw[<->] (1,3.5) -- (5,3.5) node[midway,above] {$\geq L$};

\end{tikzpicture}
    \caption{\textit{The modulus $\sqrt{1-\eta_{\gc,\ga}}$ of a well-prepared chain of three solitons, with speeds $c_1 < c_2 < c_3$, and $(a_1,a_2,a_3)\in\pos_3(L)$.}}\label{figure: chaine de trois solitons intro}
\end{figure}

\subsection{Main results}

A chain of travelling waves needs not be an exact solution because of the nonlinear nature of~\eqref{NLS}. However, it is of interest to construct a pure multi-soliton, i.e. an exact solution of~\eqref{NLS} that decouples asymptotically into such a decoupled sum of solitons as $t\rightarrow\pm\ii$, thus, that is compliant with the structure of the chain of well-prepared solitons studied in the preceding section. The main feature of a pure multi-soliton is that the potential collisions and interactions between the solitons that emerge generally preserve the shape of each soliton, namely its speed/amplitude. Such a configuration can be constructed by the inverse scattering method for integrable systems. In comparison, in this framework, we deal with a general nonlinearity $f$, which yields a (in-principle) non-integrable equation. Thus we rely on a different method, first introduced in~\cite{Martel1} and adapted to the general one-dimensional nonlinear Schrödinger equation with zero condition at infinity in~\cite{MartMer5} and very recently to the one-dimensional Zakharov system in~\cite{Riall1}.

For complete integrable systems, we expect the existence of pure multi-soliton (see~\cite{AbloSeg1} for explicit examples), due to the elastic nature of the collisions between the travelling waves. Furthermore, it is conjectured that the integrable structure is related with the nature of collisions but there are only a few results in this direction, such as~\cite{MartMer8,munoz2,perel1,Martmer12,PilVal1}.

In order to give a more accurate understanding of the relations between integrability and the elasticity of collisions, we continue the analysis of the multi-soliton-like structure in a very general frame, without further assumptions on the nonlinearity $f$. The main result of this article is the construction of an \textit{asymptotic $N$-soliton} and this is the point of the next theorem.
\begin{thm}\label{théorème: existence d'un asymptotic N soliton like solution classique variable}
    Assume that $f\in C^{3}(\R_+)$ satisfies~\eqref{theoreme de gallo condition de croissance sur f''}-\eqref{condition suffisante pour la stabilité orbitale sur f''(1)+6f'(1)>0}. Let $\gc^*\in (c_{\min},c_s)^N$ and $\ga^*\in\R^N$. There exist a solution $\mathcal{U}^{(N)}\in C^0\big(\R,\mathcal{NX}(\R)\big)$ to~\eqref{NLS} and a function $\Theta\in C^1(\R)$ such that
\begin{equation*}
d\left(\mathcal{U}^{(N)}(t),R^{or}_{\gc^*,\ga^*+\gc^*t,\Theta(t)}\right)\ttendii 0.
\end{equation*}
Moreover, the convergence is exponential in time.
\end{thm}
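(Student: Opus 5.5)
We follow the strategy of Martel~\cite{Martel1} and Martel--Merle~\cite{MartMer5}: backward compactness combined with uniform estimates. The speeds must be distinct, so we first reorder the travelling waves so that $c_1^*<\dots<c_N^*$, i.e. $\gc^*\in\adm_N$. For $t$ large, the centers $a_k^*+c_k^*t$ then belong to $\pos_N(L)$ with $L\sim\min_k(c^*_{k+1}-c^*_k)t$. Since the chain is nonvanishing for $t$ large, all estimates can be carried out in the hydrodynamical variables $(\eta,v)=(1-|\Psi|^2,-\varphi')$, where $E$ and the momentum $P=\frac12\int\eta v$ are well defined. Take an increasing sequence $T_n\to+\ii$. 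Let $\mathcal{U}_n$ be the solution given by Theorem~\ref{théorème: local global well-posedness of cauchy problem} with final data $\mathcal{U}_n(T_n)=\Ror_{\gc^*,\ga^*+\gc^*T_n,0}$. The core of the proof is a \emph{uniform backward estimate}: there exist $T_0$, $C$ and $\gamma>0$, independent of $n$, such that for $t\in[T_0,T_n]$,
\begin{equation*}
d\big(\mathcal{U}_n(t),\Ror_{\gc^*,\ga^*+\gc^*t,\Theta_n(t)}\big)\leq Ce^{-\gamma t}
\end{equation*}
for some phase $\Theta_n(t)$.

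We prove this estimate by a bootstrap on $[t^*,T_n]$, assuming the bound with $2C$ in place of $C$. On this interval we apply the modulation lemma from the orbital stability analysis of~\cite{Bert2}, which rests on the implicit function theorem and the nondegeneracy guaranteed by~\eqref{condition suffisante pour la stabilité orbitale sur f''(1)+6f'(1)>0}. It yields $C^1$ parameters $\gc(t),\ga(t),\Theta(t)$ such that $\varepsilon=(\eta-\eta_{\gc,\ga},v-v_{\gc,\ga})$ satisfies orthogonality conditions against the kernel directions $\partial_x(\eta_{c_k},v_{c_k})$ and against $\partial_c(\eta_{c_k},v_{c_k})$ (or the corresponding modulation of the phase). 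We then differentiate these conditions in time to control the parameters: $|\ga'-\gc|+|\Theta'|\lesssim\|\varepsilon\|+e^{-\gamma t}$, and the speed derivatives $|\gc'|$ are bounded at quadratic order. The interaction terms between distinct solitons are $O(e^{-\gamma t})$ because $\eta_c$ and $v_c$ decay exponentially, with rate $\sqrt{c_s^2-c^2}$.

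The main estimate comes from a Weinstein-type functional built from localized momenta:
\begin{equation*}
\mathcal{H}(t)=E(\mathcal{U}_n)-\sum_{k}c_k^*P_k(\mathcal{U}_n),
\end{equation*}
where each $P_k$ is the momentum cut off by smooth partitions $\chi_k$. The transition points of the $\chi_k$ sit at the midpoints $\frac{(a_k+a_{k+1})}{2}$ and move with speeds strictly between $c_k^*$ and $c_{k+1}^*$. A virial/monotonicity computation gives almost-monotonicity of the cumulative localized momenta $\sum_{j\geq k}P_j$ up to $O(e^{-\gamma t})$. The input is that for subsonic speeds the flux of momentum through a point moving at speed $\sigma<c_s$ has a sign in the region where $\eta$ and $v$ are small; this uses~\eqref{hypothèse de croissance sur F minorant intermediaire} and the small-$L^\ii$ bound on $\eta$ in the far region. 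Combining energy conservation, the almost-monotonicity run backward from $T_n$ (where $\varepsilon=0$), and the coercivity of the second variation of $E-c_kP$ around each $\gu_{c_k}$ on the orthogonal subspace, which is established in~\cite{Bert2,Bert3}, we get
\begin{equation*}
\|\varepsilon(t)\|^2+\sum_k|c_k(t)-c_k^*|\lesssim e^{-2\gamma t}+\|\varepsilon\|^3 .
\end{equation*}
Here the speed deviation is controlled by the localized momenta together with the condition $\frac{d}{dc}P(\gu_c)\neq 0$. We then integrate $\ga'-\gc$ to recover $|a_k(t)-a_k^*-c_k^*t|\lesssim e^{-\gamma t}$ and close the bootstrap with constant $C$, provided $T_0$ is large.

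This coercivity and monotonicity step is where we expect the main obstacle. The speeds $c_k^*$ may lie anywhere in $(c_{\min},c_s)$, so the cutoff speeds must also stay subsonic. Controlling the cubic remainders in the hydrodynamical variables additionally requires the $L^\ii$ smallness of $\eta$ to keep $\Psi$ away from zero; this follows from the $d$-bound in one dimension.

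With the uniform estimate in hand, we conclude by compactness. The sequence $\mathcal{U}_n(T_0)$ is bounded and, thanks to the uniform exponential localization, tight: the $L^2$ mass of $\eta$ and $\partial_x\mathcal{U}_n$ outside $[-A,A]$ is small uniformly in $n$, which is a further consequence of the monotonicity argument. Up to a subsequence and phase normalization, $\mathcal{U}_n(T_0)\to\mathcal{U}_0$ strongly for $d$. The continuous dependence in Theorem~\ref{théorème: local global well-posedness of cauchy problem} then propagates this convergence to every $t\geq T_0$. Passing to the limit in the uniform estimate gives the claimed exponential convergence for $\mathcal{U}^{(N)}$, the solution issued from $\mathcal{U}_0$ at time $T_0$. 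Global existence on $\R$ is provided by Theorem~\ref{théorème: local global well-posedness of cauchy problem}. The phase $\Theta(t)$ is the limit of the $\Theta_n(t)$, which converge since the $\Theta_n'$ are uniformly bounded. The nonvanishing of $\mathcal{U}^{(N)}$ for large $t$ follows from its $d$-closeness to the nonvanishing chain.
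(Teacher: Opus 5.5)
\textbf{Overall.} Your uniform backward estimate follows the paper's route: a bootstrap on $[t^*,T_n]$, modulation, the functional $E-\sum_k c_k^*\times$(localized momentum), almost-monotonicity combined with an Abel summation, and coercivity from \cite{Bert2}. Controlling $|\gc-\gc^*|$ through the localized momenta and $\frac{d}{dc}p(Q_c)\neq 0$ is a legitimate variant of what the paper does, which is to integrate $|\gc_n'|\lesssim\normX{\varepsilon^n}^2+e^{-\tau t}$ backwards from $\gc_n(s_n)=\gc^*$.

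\textbf{The genuine gap: passing to the limit.} At time $T_0$ you only have energy-space bounds. Tightness plus Rellich gives strong convergence of $1-|\mathcal{U}_n|^2$ in $L^2$ and of $\mathcal{U}_n$ in $L^\ii_{\loc}$. It does not give strong convergence of $\partial_x\mathcal{U}_n(T_0)$ in $L^2$, because tightness does not exclude oscillations (think of $1+\frac{1}{n}\phi(x)e^{inx}$ with $\phi$ compactly supported). So you do not get convergence in $d$. Consequently the continuous dependence in Gallo's theorem, which needs strongly convergent data, cannot be applied. Unlike the gKdV setting of \cite{Martel1}, there is also no lower-regularity Cauchy theory here that could exploit the partial compactness.

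The missing ingredient is the weak continuity of the hydrodynamical flow from \cite{Bert3}, which is what the paper uses:
\begin{enumerate}
\item Extract only a weak limit $Q^n(s_0)\ntendfX\mathcal{Q}^{(N)}_0$.
\item Check the hypothesis $\sup|\eta_n|\leq M<1$ on the time interval, using the uniform estimate and $\normLii{\eta_{\gc^*,\ga}}\leq M_1$ for $\ga\in\pos_N(l_1)$.
\item Deduce $Q^n(t)\ntendfX\mathcal{Q}^{(N)}(t)$.
\item Pass the bound $K_0e^{-\tau_0 t}$ to the limit by weak lower semicontinuity of $\normX{\cdot}$.
\end{enumerate}
Strong convergence is neither needed nor available at this stage.

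\textbf{Orthogonality conditions.} Your choice (against $\partial_x Q_{c_k}$ and $\partial_c Q_{c_k}$) does not give the bounds you use afterwards. Both the quadratic bound on $|\gc'|$ and the estimate $|c_k-c_k^*|\lesssim\normX{\varepsilon}^2+\dots$ from the localized momenta require $\langle\varepsilon,\nabla p(Q_{c_k})\rangle=0$, as imposed in the paper. With that condition, in the frame of the $k$-th soliton the term linear in $\varepsilon$ is $\langle JH_{c_k}\varepsilon,\nabla p(Q_{c_k})\rangle=\langle\varepsilon,H_{c_k}\partial_xQ_{c_k}\rangle=0$, where $H_c=E''(Q_c)-c\,p''(Q_c)$; this uses $J\nabla p(Q_c)=-\partial_xQ_c$. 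With orthogonality to $\partial_cQ_{c_k}$ instead, $c_k'$ and $p_k(Q)-p(Q_{c_k})$ are generically linear in $\varepsilon$. You then only get $|c_k-c_k^*|\lesssim\normX{\varepsilon}$, and the bootstrap does not close.

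\textbf{Conversion to $d$ and the phase.} Comparing $d$ with $\normX{\cdot}$ near a chain loses a factor $1+|\ga^*+\gc^*t|$. This is harmless after reducing the exponential rate, which is why the paper runs everything in $\normX{\cdot}$ and converts once at the end. The phase $\Theta\in C^1$ then comes directly from Proposition~\ref{prop:Mohamad_3_6_f}. Taking it as a limit of the $\Theta_n$ would only give convergence along a subsequence.
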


\begin{rem}
    Note that no assumption is made on $\gc^*$ and $\ga^*$, apart from the fact that we take speeds in the admissible range $(c_{\min},c_s)$, so that each travelling wave is individually stable. Up to ordering the speeds, the sum of solitons decouples as time gets large, the fastest travelling wave ahead of the others, followed by the second fastest, and so forth.
\end{rem}

In addition, we complete the analysis of the stability theory for~\eqref{NLS}, by proving the asymptotic stability of a well-prepared chain of solitons. 
\begin{thm}\label{théorème: stab asymp classical of a chain}
    Assume that the same assumptions that in Theorem \eqref{théorème: il existe une branche C^1 de solitons proche de c_s} hold true. Let $\gc^*\in \adm_N$, there exist $\delta_{*},L_* >0$ such that for any $\psi_0\in\energyset$ satisfying 
    \begin{equation*}
        d(\psi_0,R^{or}_{\gc^*,\ga^*,\Theta^*})\leq \delta_{*}\quad\text{with }(\ga^*,\Theta^*)\in\pos_N(L_*)\times\R^N,
    \end{equation*}
     the solution $\psi(t)$ associated with the initial condition $\psi_0$ is globally defined and there exist $\gc^\ii=(c_1^\ii,...,c_N^\ii)\in (c_1,c_s)^N $ and two functions $\gb=(b_1,...,b_N)\in C^1(\R_+,\R^N)$ and $(\theta_1,...,\theta_N)\in C^1(\R_+,\R^N)$ such that for any $j\in\{1,...,N\}$,
\begin{equation}\label{e^-itheta(t) psibig( t, .+b(t)big)tiitendfd gu_c^*_0}
        e^{-i\theta_j(t)} \psi\big( t, .+b_j(t)\big)\tiitendfd \gu_{c^\ii_j},
    \end{equation}

    and for any $j\in\{1,...,N\}$,
    \begin{equation*}
        b_j'(t)\ttendii c^\ii_j\quad\quad\text{ and }\quad\quad \theta_j'(t)\ttendii 0.
    \end{equation*}

Furthermore, using the convention $(c_0^\ii,c_{N+1}^\ii)=(c_{\min},c_s)$, consider translation parameters $\gB=(B_1,...,B_N,B_{N+1})\in C^0(\R_+,\R^{N+1})$ satisfying for any $j\in\{1,...,N\}$ and any $t\in\R$,
\begin{equation}\label{B_k(t)<b_k(t)<B_k+1(t),}
    B_j(t)<b_j(t)<B_{j+1}(t),
\end{equation}
and
\begin{equation}\label{c_k-1^ii < liminf_trightarrow +iidfracB_k(t)}
    c_{j-1}^\ii < \liminf_{t\rightarrow +\ii}\dfrac{B_j(t)}{t}\leq \limsup_{t\rightarrow +\ii}\dfrac{B_j(t)}{t}<c_j^\ii.
\end{equation}

Then,  for any $j\in\{1,...,N\}$, there exists a constant $\theta_j^\ii\in\R$ such that
\begin{equation*}\label{e^-itheta(t) psibig( t, .+b(t)big)tiitendfd 1}
         e^{-i\theta_j(t)} \psi\big( t, .+B_j(t)\big)\tiitendfd e^{i\theta_j^\ii}.
    \end{equation*}
\end{thm}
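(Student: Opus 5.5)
The proof follows the Martel–Merle strategy for asymptotic stability of multi-solitons, reduced to the single-soliton result of Theorem~\ref{théorème: il existe une branche C^1 de solitons proche de c_s}. All the work is done in the hydrodynamical variables $(\eta,v)=(1-|\psi|^2,-\varphi')$, which are legitimate because orbital stability of the chain (proved in~\cite{Bert2}) keeps $\psi(t)$ non-vanishing and $d$-close to $\Ror_{\gc(t),\ga(t),\Theta(t)}$ for all $t\ge 0$. This holds for suitable $C^1$ modulation parameters with $|c_j(t)-c_j^*|$ small and $a_{j+1}(t)-a_j(t)\ge L_*/2+\sigma t$ for some $\sigma>0$. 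The parameters are fixed by orthogonality conditions on $(\eta-\eta_{\gc,\ga},v-v_{\gc,\ga})$ against the localized kernel directions of the linearized operators. The modulation equations then give $|a_j'-c_j|+|c_j'|+|\theta_j'|\lesssim \|\varepsilon\|_{\text{loc}}+e^{-\nu(L_*+\sigma t)}$. The phases $\theta_j$ are obtained by a local lifting near $a_j$.

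The first key step is monotonicity. I introduce cut-offs $\Phi_j(t,x)=\phi\big((x-\tilde B_j(t))/K\big)$ with $\phi$ a smoothed $\tanh$ transition, where $\tilde B_j(t)=\frac{a_{j-1}(t)+a_j(t)}{2}$ lies between consecutive solitons. I consider localized momenta and energies $\mathcal{P}_j(t)=\frac12\int \eta v\,\Phi_j$, and also $\mathcal{E}_j(t)$. Differentiating in time and using the dispersion relation, where the bulk speed $(c_{j-1}+c_j)/2<c_s$ makes the quadratic part of $-\dot{\tilde B}_j\,\partial_t\mathcal{P}_j$ coercive, I aim for almost monotonicity:
\[
\mathcal{P}_j(t_2)-\mathcal{P}_j(t_1)\ge -Ce^{-\nu(L_*+\sigma t_1)/K}.
\]
The same holds with $x-\tilde B_j$ replaced by $x-\tilde B_j - \gamma(t-t_0)$ for the Liouville step. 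Combined with energy conservation, this yields the stability of each localized piece and the existence of the limits $c_j^\ii$, with $c_j^\ii>c_{j-1}^\ii$.

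The second step extracts limit profiles. For each $j$, take $t_n\to\ii$ and a weak limit (in the sense of~\eqref{définition convergence faible d}) of $e^{-i\theta_j(t_n)}\psi(t_n,\cdot+b_j(t_n))$, where $b_j=a_j$. Weak continuity of the flow in $\energyset$ (from~\cite{Bert3}) gives a solution $\tilde\psi_j$ that stays $\delta$-close to $\gu_{c_j^\ii}$ in a neighbourhood of $0$ for all times. The monotonicity formulas, applied on both sides of $b_j$, show that $\tilde\psi_j$ is uniformly exponentially localized, i.e.\ smooth and compact in $\energyset$ modulo translation. The Liouville-type rigidity theorem near $\gu_c$ from~\cite{Bert3}, which underlies Theorem~\ref{théorème: il existe une branche C^1 de solitons proche de c_s}, then forces $\tilde\psi_j=\gu_{c_j^\ii}$ up to symmetries. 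Uniqueness of the limit and orthogonality give full convergence~\eqref{e^-itheta(t) psibig( t, .+b(t)big)tiitendfd gu_c^*_0}, $b_j'\to c_j^\ii$ and $\theta_j'\to0$. The region far from the solitons must be treated separately: the interaction there is exponentially small by the separation, and the same monotonicity localizes mass and momentum around each soliton.

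For the last claim, with $B_j$ strictly between $b_{j-1}$ and $b_j$ in the speed sense~\eqref{c_k-1^ii < liminf_trightarrow +iidfracB_k(t)}, I apply the monotonicity formulas centred at speeds $\gamma$ with $c_{j-1}^\ii<\gamma<\liminf B_j/t$ and $\limsup B_j/t<\gamma'<c_j^\ii$. Any weak limit of $\psi(t_n,\cdot+B_j(t_n))$ is then a solution whose localized energy vanishes in the sense of the virial/monotonicity argument, so it is a constant of modulus one. The phase limit $\theta_j^\ii$ is identified from the convergence of $\int v\,\Phi$ between consecutive solitons, which is monotone hence convergent.

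The main obstacle is the Liouville/rigidity step for each limiting object. It requires the limiting solutions to remain in the stability regime and to be uniformly localized, which in turn needs the almost monotonicity to hold with speeds strictly between $c_{j-1}^\ii$ and $c_j^\ii$, below $c_s$. I expect the hardest computation to be the coercivity of the quadratic form in $\partial_t\mathcal{P}_j$ in the hydrodynamical variables, including control of the cubic terms via $\|\eta\|_{L^\ii}$ small away from the solitons. I would first try to reuse the one-soliton virial estimates of~\cite{Bert3} verbatim, localized between solitons.
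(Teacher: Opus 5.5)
Your overall architecture matches the paper's: orbital stability of the chain from \cite{Bert2}, almost-monotonicity of localized momenta, weak limit profiles, rigidity, then a return to $\psi$. Near each soliton, you and the paper both simply reduce to the single-soliton Liouville theorem of \cite{Bert3}.

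The gap is in the part that is new in this theorem, the region between consecutive solitons. There you only assert that a weak limit of $\psi(t_n,\cdot+B_j(t_n))$ has vanishing localized energy ``in the sense of the virial/monotonicity argument''. You plan to get this by reusing the one-soliton virial estimates of \cite{Bert3}, but those are built on the linearization around $Q_c$ (kernel, orthogonality conditions) and are not the right tool around the constant state. The paper closes this step with three ingredients your proposal lacks.
(i) Orbital stability of the trivial solution $(\eta,v)=(0,0)$. This makes the limit profile $\widetilde\varepsilon_j$ a global solution with $\normX{\widetilde\varepsilon_j(t)}$ uniformly small for all $t\in\R$.
(ii) Spatial decay of $\widetilde\varepsilon_j$ and its derivatives. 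This comes from passing to the weak limit the time-integrated localization estimate given by the monotonicity formula. The monotone quantity itself does not pass to the limit, since it carries the momenta of the solitons on the right.
(iii) The virial $V(t)=\int(x-\widetilde A_j(t))\,\widetilde\eta_j\widetilde v_j$. Its quadratic part $\frac12\int(\partial_x\widetilde\eta_j)^2+\frac{c_s^2-(\widetilde A_j')^2}{2}\int\widetilde\eta_j^2+\frac12\int\widetilde v_j^2$ is coercive because the path is subsonic. Boundedness of $V$ then gives $\int^{\ii}\normX{\widetilde\varepsilon_j(t)}^2\,dt<\ii$, and orbital stability of zero, used backward in time, forces $\widetilde\varepsilon_j\equiv0$.

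Bracketing $B_j$ between two fixed speeds $\gamma<\liminf B_j/t$ and $\gamma'>\limsup B_j/t$ also does not localize anything at $B_j(t_n)$. The paper instead builds, along a sparse subsequence, a $C^1$ path $\widetilde A_j$ with $\widetilde A_j(t_n)=B_j(t_n)$ and $\widetilde A_j'\to\gamma_j^\ii\in(c_{j-1}^\ii,c_j^\ii)$, and runs the argument on it. Your own almost-monotonicity could close this step more directly if you kept the coercive term $\kappa\int((\partial_x\eta)^2+\eta^2+v^2)\chi'$ along such a path. The error is then integrable in time, so the localized energy is too, and the weak limit vanishes by lower semicontinuity. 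The proposal, however, makes neither argument.

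The identification of $\theta_j^\ii$ also fails as stated. $\int v\,\Phi$ is not monotone: its time derivative is a flux $\int\big(f(1-\eta)-v^2-\frac{\partial_x^2\eta}{2(1-\eta)}-\frac{(\partial_x\eta)^2}{4(1-\eta)^2}\big)\Phi'$ plus a transport term, with no sign. If $\Phi$ is a transition function, it is not even finite for $v\in L^2$. The paper uses no such quantity. It takes the $H^1_{\mathrm{loc}}$ weak limit $\psi_{j,\ii}$ of $e^{-i\theta_j(t)}\psi(t,\cdot+B_j(t))$. It gets $|\psi_{j,\ii}|=1$ from $\eta(t,\cdot+B_j(t))\rightharpoonup0$ in $H^1$ and Rellich, and $\varphi_{j,\ii}'=0$ from $v(t,\cdot+B_j(t))\rightharpoonup0$ in $L^2$.

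Finally, your claim that monotonicity plus energy conservation yields the limits $c_j^\ii$ is unsupported, because the localized momenta also contain the contribution of $\varepsilon$, which tends to zero only weakly. In the paper, $\gc(t_n)\to\gc^\ii$ comes from compactness, and the full convergence is inherited from the single-soliton analysis of \cite{Bert3}.
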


The convergence~\eqref{e^-itheta(t) psibig( t, .+b(t)big)tiitendfd gu_c^*_0} is relative to the metric $d$ given in~\eqref{définition convergence faible d}, and is sharp in the sense that it could not be improved to a strong convergence, without additional regularity or localization assumptions on the initial perturbation. For instance, referring to the last item in definition~\eqref{définition convergence faible d}, the strong convergence in $L^\ii(\R)$ is precluded by the emergence of smaller solitons during the evolution, or by a slow phase winding phenomenon. Regarding both convergences in $L^2$, they could not be improved to strong convergences due to the Hamiltonian nature of the equation. Nevertheless, we think that strong convergences in refined spaces could be achieved. A reasonable conjecture is that there exists well-chosen constants $(\sigma_-,\sigma_+)\in\R^2$ such that the convergences are true when we restrict to strong convergences on the branches $\sigma_-t\leq x\leq\sigma_+ t $.

\begin{rem}
The second part of Theorem~\ref{théorème: stab asymp classical of a chain} relates to the core of the region that separates two travelling waves. It claims that there is no interaction between two well-separated solitons. 
\end{rem}

\begin{rem}
As stated before, this work summarizes the stability results regarding the large-time asymptotics of a solution to a dispersive system that is~\eqref{NLS}. Since some structure has been highlighted, it is suitable for the understanding of more complicated problems, such as the mutual interactions between travelling waves, and more precisely the nature of collisions between two solitons, or even in a $N$-soliton, such as the one constructed in Theorem~\ref{théorème: existence d'un asymptotic N soliton like solution classique variable}.
\end{rem}

This article splits into three sections. In Section~\ref{section: Hydrodynamical setting}, we introduce all the features of the hydrodynamical setting, making possible a crucial change of variables, necessary to prove the main theorems. In Section~\ref{section: Asymptotic stability of a well-prepared chain}, we prove the asymptotic stability of the well-prepared chain, and we finish by the construction of the asymptotic $N$-soliton in Section~\ref{section: Construction of an asymptotic N-soliton}.

\begin{rem}
    Throughout the article, we shall write $g=\grandOde{f}$ or $g\lesssim f$ whenever there exists a constant $K>0$ only depending on $\gc^*$ such that for any $(x,c,n)\in\R\times (0,c_s)\times\N,\left| f(x,c,n)\right|\leq K \left|g(x,c,n)\right|$.
\end{rem}

\section{Hydrodynamical setting}\label{section: Hydrodynamical setting}

If $\Psi$ does not vanish, we can lift it as $\Psi=\rho e^{i\varphi}$. Setting the new variables $\eta=1-\rho^2$ and $v=-\partial_x \varphi$, we obtain the hydrodynamical form of the equation 

\begin{equation}\label{NLShydro}\tag{$NLS_{hy}$}
    \left\{
\begin{array}{l}
    \partial_t\eta =-2\partial_x \big(v(1-\eta)\big), \\
    \partial_t v =-\partial_x \bigg( f(1-\eta)-v^2-\dfrac{\partial_x^2 \eta}{2(1-\eta)}-\dfrac{(\partial_x\eta)^2}{4(1-\eta)^2} \bigg). \\
\end{array}
\right.
\end{equation}

For $l\geq 0$, we shall write $\energysethydrok :=H^{l+1}(\R)\times H^l(\R)$ and endow this space with the associated euclidean norm 
\begin{equation*}
    \Vert (\eta,v)\Vert_{\mathcal{X}_{hy}^l}^2=\Vert \eta\Vert_{H^{l+1}}^2 + \Vert v\Vert_{H^{l}}^2.
\end{equation*}

Let us also introduce the non-vanishing associated subset

\begin{equation*}
    \Nenergysethydrok:=\big\{(\eta,v)\in \energysethydrok\big| \max_\R\eta <1\big\}.
\end{equation*}

We will label $\energysethydro:=H^1(\R)\times L^2(\R)$ and $\Nenergysethydro:=\energysethydro\cap \{\max_\R\eta <1\}$ corresponding to the set $\energysethydrok$ when $l=0$. This functional setting is related to several quantities, which are at least formally, conserved along the flow. These are the energy
\begin{equation}\label{expression de l'énergie en hydro}
    E(Q)=E(\eta,v):=\int_\R e(\eta,v):=\dfrac{1}{8}\int_\R\dfrac{(\partial_x\eta)^2}{1-\eta}+\dfrac{1}{2}\int_\R (1-\eta)v^2+\dfrac{1}{2}\int_\R F(1-\eta).
\end{equation}
and the momentum 
\begin{equation*}\label{expression du moment}
    p(\eta,v):=\dfrac{1}{2}\int_\R \eta v .
\end{equation*}

As mentioned in~\cite{Bert1}, the assumptions that we make on the nonlinearity $f$ give to $\Nenergysethydro$ the status of the energy set for~\eqref{NLShydro}. Note also that, in view of the previous expression, $p$ is smooth on $\energysethydro$. Moreover, if $f\in C^l(\R_+)$, $E$ is $ C^{l+1}$ on $\Nenergysethydro$.

\subsection{Existence of a solution in different frameworks}

Regarding the existence of solutions to~\eqref{NLShydro}, we state an upgraded version of a local well-posedness theorem due to C. Gallo, which we improved by showing the weak continuity of the flow (we refer to Appendix~B in~\cite{Bert3}). 
\begin{thm}[\cite{Gallo3}]\label{théoreme: LWP of NLShydro}
    We assume that $f\in C^3(\R_+)$ is such that for all $\rho\in\R$,
\begin{equation*}
    \dfrac{c_s^2}{4} (1-\rho)^2 \leq F(\rho).
\end{equation*}
Let $l\in\{0,1,2\}$ and let $(\eta_0,v_0)\in \Nenergysethydrok$. There exist $T_{\max} >0$ and a unique solution $(\eta,v)\in C^0\big([0,T_{\max} ),\Nenergysethydrok\big)$ to~\eqref{NLShydro} with initial datum $(\eta_0,v_0)$. The maximal time $T_{\max}$ is continuous with respect to the initial datum and is characterized by
    \begin{equation*}
        \lim_{t\rightarrow T_{\max}^-}\sup_{x\in\R}\eta(t,x) = 1.
    \end{equation*}

The flow map is continuous on $\Nenergysethydrok$, and the energy and the momentum are conserved along the flow.
\end{thm}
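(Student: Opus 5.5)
We would prove this local well-posedness statement for~\eqref{NLShydro} by transferring the problem to the original equation~\eqref{NLS}, where Theorem~\ref{théorème: local global well-posedness of cauchy problem} already provides a flow, rather than working directly on the quasilinear system. Given $(\eta_0,v_0)\in\Nenergysethydrok$, we set $\rho_0=\sqrt{1-\eta_0}$ and $\varphi_0(x)=-\int_0^x v_0$, and take $\Psi_0=\rho_0e^{i\varphi_0}$. Since $\max\eta_0<1$ and $\eta_0\in H^{l+1}\subset L^\infty$, $\rho_0$ is bounded below by a positive constant, so $\Psi_0\in\Nenergyset$. Moreover $\partial_x\Psi_0$ has the regularity $H^{l}$, because $\partial_x\rho_0\in H^l$ and $v_0\in H^l$.

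\textbf{Step 1: solution of~\eqref{NLS} with bounded modulus.} Theorem~\ref{théorème: local global well-posedness of cauchy problem} gives a solution $\Psi=\Psi_0+w$ with $w\in C^0(\R,H^1)$. For $l\ge1$ we propagate $H^{l}$ regularity of $w$ (that is, $\partial_x\Psi\in H^l$) by a standard commutator and energy argument. Here $f\in C^3$ is sufficient for $l\le2$, which explains the restriction $l\in\{0,1,2\}$. Since $w(t)\in H^1\subset C^0\cap L^\infty$ depends continuously on $t$, $\inf_\R|\Psi(t)|$ stays positive on a maximal interval $[0,T_{\max})$.

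\textbf{Step 2: lifting.} On $[0,T_{\max})$ we lift $\Psi=\rho e^{i\varphi}$ and define $\eta=1-|\Psi|^2$ and $v=-\partial_x\varphi=-\mathrm{Im}(\bar\Psi\partial_x\Psi)/|\Psi|^2$. The lower bound on $|\Psi|$ makes the map $\Psi\mapsto(\eta,v)$ locally Lipschitz from $\Psi_0+H^{l+1}$, with the metric $d$ upgraded accordingly, into $\energysethydrok$. For $\eta$ we use that $1-|\Psi|^2\in L^2$ (by \eqref{hypothèse de croissance sur F minorant intermediaire} and energy conservation), and the derivatives are controlled via $\partial_x\Psi\in H^l$. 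A direct computation, done first for smooth data and then extended by density, shows that $(\eta,v)$ solves~\eqref{NLShydro}.

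\textbf{Step 3: uniqueness, blow-up criterion and conservation laws.} Uniqueness follows by reversing the lifting. Any solution in $C^0([0,T),\Nenergysethydrok)$ rebuilds a function $\Psi$ whose phase is fixed by integrating $v$ and using the first equation of~\eqref{NLShydro} to recover $\partial_t\varphi$. This $\Psi$ solves~\eqref{NLS}, so uniqueness for~\eqref{NLS} applies. The characterization $\sup_x\eta(t,x)\to1$ at $T_{\max}$ holds because, as long as $\sup\eta$ stays below $1$ on a time interval, $\Psi$ stays away from zero there. Energy conservation and the $C^0$ flow then keep $(\eta,v)$ bounded in $\energysethydrok$, so the solution can be continued. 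Conservation of energy is inherited from~\eqref{NLS}, since the hydrodynamic energy coincides with $E(\Psi)$. Conservation of momentum is obtained by a direct computation using~\eqref{NLShydro}, first for smooth data and then by density and continuity of the flow.

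\textbf{Main obstacle: continuity of $T_{\max}$ and of the flow map.} By continuous dependence for~\eqref{NLS} in $\Psi_0+H^1$, combined with the Sobolev embedding into $L^\infty$, $\inf|\Psi|$ depends continuously on the data uniformly on compact time intervals. This gives lower semicontinuity of $T_{\max}$ and continuity of the flow on $\Nenergysethydrok$. The full continuity of $T_{\max}$ claimed in the statement requires more care, and we would rely on the upgraded argument of~\cite{Bert3} (Appendix~B) at this point.
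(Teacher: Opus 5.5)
The paper does not prove this theorem. It quotes it from \cite{Gallo3}, with the weak-continuity upgrade taken from \cite{Bert3}, so your proposal has to stand on its own. Lifting the \eqref{NLS} flow through $\Phi$ is a reasonable strategy, but it has a genuine gap exactly at the step you call the main obstacle: continuity of the flow map on $\Nenergysethydrok$. You transport continuous dependence for \eqref{NLS} \emph{within the fixed affine space} $\Psi_0+H^1$. Hydrodynamically close data do not lie in a common affine space. Take $\widetilde\eta_0=\eta_0$ and $\widetilde v_0=v_0+\delta\chi$ with $\chi\in C^\infty_c((0,+\infty))$ and $\int_\R\chi=1$. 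Then $\widetilde\Psi_0=e^{-i\delta}\Psi_0$ to the right of the support of $\chi$, so $\widetilde\Psi_0-\Psi_0\notin L^2(\R)$ however small $\delta\neq0$ is. With $\widetilde v_0=v_0+\delta(1+x^2)^{-3/8}$, which is $O(\delta)$ in every $H^l$, the phase difference grows like $\delta|x|^{1/4}$. Hence $\normLii{\widetilde\Psi_0-\Psi_0}$ does not tend to $0$ as $\delta\to0$, so the Zhidkov topology does not help either. On the \eqref{NLS} side, the topology in which $\Phi$ is a homeomorphism is the one given by the weak metric $d$ (see the remark following Proposition~\ref{prop:Mohamad_1_1_f}), and in it $\Phi^{-1}$ is not locally Lipschitz. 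Your Lipschitz claim holds only for the affine $H^{l+1}$ topology, which is the wrong one here. The missing ingredient is continuous dependence of the \eqref{NLS} flow with respect to $d$ (and a higher-order analogue of $d$ for $l=1,2$), uniformly on compact time intervals. Only with it can continuity of the hydrodynamical flow, and lower semicontinuity of $T_{\max}$, be transported through $\Phi$. Upper semicontinuity of $T_{\max}$ is left without argument. Appendix~B of \cite{Bert3}, by the paper's own description, concerns weak continuity of the flow, not $T_{\max}$.

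Three further points need fixing.
\begin{itemize}
\item Theorem~\ref{théorème: local global well-posedness of cauchy problem} requires \textup{(H0)}, and, when $\alpha_1>\frac32$, a lower bound on $F$ at infinity; neither is assumed in the statement. You must modify $f$ for large $\rho$. This is legitimate, because energy conservation together with the lower bound on $F$ gives an a priori bound on $\normLii{\Psi(t)}$, but it has to be said.
\item In the uniqueness step, the phase is not recovered from the first equation of \eqref{NLShydro}, which is the mass equation, but from the second. Integrating the second equation in $x$ gives $\partial_t\varphi=f(1-\eta)-v^2-\frac{\partial_x^2\eta}{2(1-\eta)}-\frac{(\partial_x\eta)^2}{4(1-\eta)^2}$ only up to a function of $t$. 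Fixing that function at $x=0$ requires $\partial_x^2\eta(t,0)$, which is meaningless for $l=0$. You need an averaged normalization or a density argument, which is the content of Proposition~\ref{prop:Mohamad_3_6_f}. You must also check that the rebuilt $\Psi$ belongs to $\Psi_0+C^0([0,T),H^1)$ before invoking uniqueness for \eqref{NLS}.
\item Energy conservation does not bound $(\eta,v)$ in $\energysethydrok$ for $l\geq1$. You do not need that bound, however. The \eqref{NLS} solution, with its persistent $H^{l+1}$ regularity, exists up to and beyond $T_{\max}$, so the blow-up criterion follows directly from the continuity of $t\mapsto\inf_\R|\Psi(t)|$.
\end{itemize}
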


The weak continuity of the flow map reads as follows.
\begin{prop}[\cite{Bert3}]\label{proposition: weak continuity of the flow in hydrodynamical setting}
    Let $(Q_{n,0})_n\in\big(\Nenergysethydro\big)^\N$ and $Q_{0}\in\Nenergysethydro$ such that 
    \begin{equation*}
        Q_{n,0}\ntendfX Q_0.
    \end{equation*}
Assume in addition that there exists a constant $M\in |0,1)$ such that the associated solutions $Q_n\in C\big([0,T_{\max,n}),\Nenergysethydro\big)$ and $Q\in C\big([0,T_{\max}),\Nenergysethydro\big)$ to~\eqref{NLShydro} given by Theorem~\ref{théoreme: LWP of NLShydro} satisfy for some $0<T<\liminf_{n\rightarrow+\ii} T_{\max,n}$ and any $(n,t,x)\in \N\times [-T,T]\times \R$,
\begin{equation*}
    \big|\eta_n(t,x)\big|\leq M .
\end{equation*}

Then $0 < T < T_{\max}$ and for any $t\in [-T,T]$,
\begin{equation*}
    Q_n(t)\ntendfX Q(t).
\end{equation*}
\end{prop}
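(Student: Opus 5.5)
The statement is Proposition~\ref{proposition: weak continuity of the flow in hydrodynamical setting}, the weak continuity of the hydrodynamical flow. I would prove it by a compactness argument combined with uniqueness for~\eqref{NLShydro}.

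\textbf{Uniform bounds.} Weak convergence of $Q_{n,0}$ in $\energysethydro$ gives $\sup_n \normX{Q_{n,0}}<\infty$. Since $|\eta_n|\leq M<1$ on $[-T,T]$, the energy density~\eqref{expression de l'énergie en hydro} is uniformly comparable to $(\partial_x\eta)^2+v^2+\eta^2$; here I use~\eqref{hypothèse de croissance sur F minorant intermediaire} for the potential term. Conservation of $E$ then yields
\[
\sup_n\sup_{|t|\leq T}\normX{Q_n(t)}<\infty .
\]

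\textbf{Time equicontinuity in a weak norm.} From~\eqref{NLShydro}, $\partial_t\eta_n$ and $\partial_t v_n$ are spatial derivatives of quantities bounded in $L^1+L^2$ uniformly in $n$. The only delicate term is $\partial_x^2\eta_n/(1-\eta_n)$, which I would handle by testing against smooth functions, so that the second derivative falls on the test function. This gives
\[
\sup_n \norm{\partial_t Q_n}{L^\infty([-T,T],H^{-2}\times H^{-3})}<\infty .
\]
By Arzelà--Ascoli in $C([-T,T],H^{-s}_{\loc})$, combined with the uniform $\energysethydro$ bound and a diagonal argument, a subsequence satisfies $Q_n(t)\rightharpoonup \tilde Q(t)$ weakly in $\energysethydro$ for every $t$. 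Moreover $\eta_n\to\tilde\eta$ strongly in $C([-T,T],L^\infty_{\loc})$, by interpolation with the $H^1$ bound, and $|\tilde\eta|\leq M$.

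\textbf{Limit equation and identification.} I would pass to the limit in the weak formulation of~\eqref{NLShydro}. The products $v_n(1-\eta_n)$ converge because $\eta_n$ converges locally strongly. The term $f(1-\eta_n)$ converges locally strongly. The derivative terms are treated after integrating by parts against test functions. The quadratic terms $v_n^2$ and $(\partial_x\eta_n)^2$ are the main obstacle, since weak convergence does not pass through them. To gain local strong compactness of $v_n$ and $\partial_x\eta_n$, I would first regularize: approximate the data by $H^2\times H^1$ data, using the continuity of the flow and of $T_{\max}$ in Theorem~\ref{théoreme: LWP of NLShydro}. In that setting, a local smoothing/Kato-type estimate for~\eqref{NLShydro} (the dispersive term $\partial_x^2\eta$ gives half a derivative of local gain) provides $L^2_tH^{1/2}_{\loc}$ bounds on $v_n$ and $H^{3/2}_{\loc}$ bounds on $\eta_n$, uniformly in $n$, depending only on the energy and on $M$. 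Aubin--Lions then gives strong $L^2_{t,\loc}$ convergence of $v_n$ and $\partial_x\eta_n$. Hence $\tilde Q$ is a weak solution on $[-T,T]$ with data $Q_0$ and $\sup\tilde\eta\leq M<1$.

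It remains to show that $\tilde Q$ coincides with the Gallo solution $Q$ and that $T<T_{\max}$. I would first check that $\tilde Q$ has the regularity $C([-T,T],\Nenergysethydro)$ required by the uniqueness class; weak continuity follows from the above, and strong continuity from energy conservation for the limit. On $[0,\min(T,T_{\max}))$, uniqueness gives $\tilde Q=Q$, so $\eta\leq M$ there. By the blow-up characterization of $T_{\max}$ in Theorem~\ref{théoreme: LWP of NLShydro}, this forces $T<T_{\max}$, and the same holds backward in time. Since the limit is independent of the subsequence, the whole sequence converges, which proves $Q_n(t)\ntendfX Q(t)$ for all $t\in[-T,T]$.
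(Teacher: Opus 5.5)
The paper does not prove this proposition; it imports it from Appendix~B of \cite{Bert3}. Your compactness--uniqueness strategy is the right general idea, but there is a genuine gap at the identification step, and everything after it depends on that step.

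You claim $\tilde Q\in C([-T,T],\Nenergysethydro)$, with strong continuity coming ``from energy conservation for the limit''. Nothing gives you energy conservation for $\tilde Q$. It is only a weak limit, so lower semicontinuity yields $E(\tilde Q(t))\leq\liminf_n E(Q_{n,0})$. In general this is strictly larger than $E(Q_0)$, because weak convergence can lose energy at infinity. Nor can you derive the energy identity for a weak solution at this regularity: the formal identity $\frac{d}{dt}E(\tilde Q)=\langle E'(\tilde Q),\partial_t\tilde Q\rangle$ pairs factors that do not lie in dual spaces.

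So $\tilde Q$ has not been placed in the class where Gallo's theorem gives uniqueness. What you would actually need is unconditional uniqueness for weak solutions of \eqref{NLShydro} in $L^\ii([-T,T],\Nenergysethydro)$. That system is quasilinear, with the derivative nonlinearities $v^2$, $(\partial_x\eta)^2$ and $\partial_x^2\eta/(1-\eta)$, and no such result is available. Your conclusions $T<T_{\max}$ and independence of the subsequence both rest on $\tilde Q=Q$, so they are not established either.

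The uniform local smoothing estimate for \eqref{NLShydro}, in terms of the energy and $M$, is also only asserted. It is the other crux of your argument. It can be obtained through the original variables, but there it becomes unnecessary.

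Both difficulties disappear if you run the same scheme in the original variables and use $|\eta_n|\leq M$ only to move between the two formulations.
\begin{enumerate}
\item Set $\Psi_{n,0}:=\Phi(Q_{n,0},0)$. Weak convergence in $\energysethydro$ gives $\Psi_{n,0}\to\Psi_0$ in $L^\ii_{\loc}$ and $\partial_x\Psi_{n,0}\rightharpoonup\partial_x\Psi_0$ in $L^2$.
\item Bound $\normHun{\Psi_n(t)-\Psi_{n,0}}$ uniformly for $|t|\leq T$ using the Duhamel formula. Control $(e^{it\partial_x^2}-1)\Psi_{n,0}$ in $L^2$ by $|t|^{1/2}\|\partial_x\Psi_{n,0}\|_{L^2}$, and use $\|\Psi_nf(|\Psi_n|^2)\|_{L^2}\lesssim\|\eta_n\|_{L^2}$.
\item Pass to the limit. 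The nonlinearity $\Psi f(|\Psi|^2)$ contains no derivative, so local uniform convergence of $\Psi_n$ (by Arzel\`a--Ascoli) suffices, and no smoothing estimate is needed.
\item Identify the limit. It lies in $\Psi_0+L^\ii([-T,T],H^1(\R))$ and is bounded. In this class, uniqueness of distributional solutions follows from the Duhamel formula and Gronwall's lemma in $L^2$, because $\Psi\mapsto\Psi f(|\Psi|^2)$ is locally Lipschitz on bounded functions. This identifies the limit with the solution associated with $Q_0$ wherever the latter exists.
\item Return to the hydrodynamical variables via $\eta_n(t)=1-|\Psi_n(t)|^2$ and $v_n(t)=i\partial_x\Psi_n(t)\cdot\Psi_n(t)/|\Psi_n(t)|^2$. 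The lower bound $|\Psi_n|^2\geq 1-M$ makes the division harmless and gives weak convergence of $Q_n(t)$.
\end{enumerate}
The bound $|\eta|\leq M$ then holds for the limit, and your blow-up argument for $T<T_{\max}$ goes through unchanged.
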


In the proposition just below, we give an explicit correspondence between the original and the hydrodynamical framework. The proof of this next result does not depend on the nonlinearity $f$ and can be recovered exactly as in~\cite{Mohamad1}.

\begin{prop}\label{prop:Mohamad_1_1_f}
For $l\geq 0$, we define the mapping
\[
\Phi : \mathcal{NX}_{hy}^l(\mathbb{R}) \times \mathbb{R}/(2\pi\mathbb{Z})
\longrightarrow \mathcal{X}^l(\R)
\]
with
\[
\Phi\big((\eta,v),\theta\big)(x)
=
\sqrt{1-\eta(x)}
\exp\!\left(
-i\int_0^x v(s)\,ds + i\theta
\right),
\]

Then $\Phi$ is a bijection from
$\mathcal{NX}_{hy}^l(\mathbb{R}) \times \mathbb{R}/(2\pi\mathbb{Z})$
onto $\mathcal{NX}^l(\R)$. Moreover, its inverse is given by
\[
\Phi^{-1}(\psi)
=
\left(
\Big(1-|\psi|^2,\ \dfrac{i\psi'. \psi}{|\psi|^2}\Big), \arg\big(\psi(0)\big)
\right).
\]
\end{prop}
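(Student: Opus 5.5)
We check directly that $\Phi$ maps into $\mathcal{NX}^l(\R)$, that the proposed formula sends $\mathcal{NX}^l(\R)$ into $\mathcal{NX}_{hy}^l(\R)\times\R/(2\pi\Z)$, and that the two maps are mutually inverse. No property of $f$ is used except through the comparison between $F(|\psi|^2)$ and $(1-|\psi|^2)^2$ near $|\psi|=1$.

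\textbf{Step 1: $\Phi$ is well defined.} Let $(\eta,v)\in\mathcal{NX}_{hy}^l(\R)$ and $\theta\in\R/(2\pi\Z)$. Since $\eta\in H^{l+1}(\R)$ is continuous, tends to $0$ at infinity and satisfies $\max_\R\eta<1$, we have $1-\eta\geq\delta>0$. Hence $\rho:=\sqrt{1-\eta}$ is smooth in $\eta$, bounded, and bounded below by $\sqrt\delta$. The phase $\varphi(x)=-\int_0^x v+\theta$ is in $H^{l+1}_{\loc}$ and satisfies $\varphi'=-v\in H^l$. Write $\psi=\rho e^{i\varphi}$.
\begin{itemize}
\item $\psi'=(\rho'+i\rho\varphi')e^{i\varphi}$, with $\rho'=-\eta'/(2\rho)\in H^l$ and $\rho\varphi'\in H^l$ because $\rho$ is bounded with derivatives in $H^l$. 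Higher derivatives are handled by Leibniz and Sobolev algebra estimates in dimension one.
\item $1-|\psi|^2=\eta\in L^2$, and $F(1-\eta)\lesssim\eta^2$ on the bounded range of $\eta$, since $F(1)=F'(1)=0$ and $F\in C^2$. So $F(|\psi|^2)\in L^1$.
\item $\inf|\psi|\geq\sqrt\delta>0$.
\end{itemize}
Therefore $\psi\in\mathcal{NX}^l(\R)$, where $\mathcal{X}^l$ is understood as requiring $\psi'\in H^l$.

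\textbf{Step 2: the inverse formula lands in the right space.} Let $\psi\in\mathcal{NX}^l(\R)$. Then $\psi\in H^1_{\loc}$ is continuous and nonvanishing, so on $\R$ (simply connected) it admits a continuous lifting $\psi=\rho e^{i\varphi}$ with $\rho=|\psi|\geq m>0$ and $\varphi\in H^1_{\loc}$. The phase is unique up to $2\pi\Z$, so $\arg\psi(0)\in\R/(2\pi\Z)$ is well defined.

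Set $\eta=1-\rho^2$. Then $\eta\in L^2$ follows from (H1): $\frac{c_s^2}{4}\eta^2\leq F(|\psi|^2)\in L^1$. Moreover $\eta'=-2\,\mathrm{Re}(\bar\psi\psi')$. This step is the main point: we need $\psi\in L^\infty$ to conclude $\eta'\in L^2$. Boundedness follows because $\eta\in L^2$ and $\eta'\in L^1_{\loc}$ with $|\eta'|\leq 2|\psi||\psi'|$. A standard argument then applies: on any interval where $|\psi|\geq2$, we have $|\psi|\lesssim|\eta|$ there, and $|\psi|^2$ is controlled by $\int|\eta||\psi'|\leq\|\eta\|_{L^2}\|\psi'\|_{L^2}$ plus a constant. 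Hence $\eta\in H^1$, with higher regularity again by Leibniz. Also $\max\eta=1-\min\rho^2<1$.

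For the velocity, $\psi'.\,i\psi=\mathrm{Re}(\psi'\,\overline{i\psi})$ gives $\frac{i\psi'.\psi}{|\psi|^2}=-\varphi'$ (sign convention of the real scalar product), which equals $v$. Since $|\varphi'|\leq|\psi'|/m$, we get $v\in L^2$, and $v\in H^l$ inductively.

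\textbf{Step 3: the maps are mutually inverse.} Both compositions are the identity by direct computation:
\begin{itemize}
\item From $(\eta,v,\theta)$, we have $|\Phi|^2=1-\eta$, the phase derivative equals $-v$, and $\arg\Phi(0)=\theta$.
\item Conversely, given $\psi$, the functions $\Phi(\Phi^{-1}\psi)$ and $\psi$ have the same modulus, the same phase derivative, and the same phase at $0$ modulo $2\pi$, so they coincide.
\end{itemize}
This yields bijectivity onto $\mathcal{NX}^l(\R)$. As in~\cite{Mohamad1}, the only nontrivial point is the $L^\infty$ bound on $\psi$ in Step 2, which is needed to place $\eta$ in $H^1$.
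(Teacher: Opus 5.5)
Your proof is correct and is essentially the direct verification that the paper defers to \cite{Mohamad1}: you check that both compositions are the identity, and the only nontrivial input is the boundedness of $\psi$ (deduced from $\eta\in L^2$ and $\psi'\in L^2$), which places $\eta'=-2\,\mathrm{Re}(\bar\psi\psi')$ in $L^2$. One minor correction to your opening remark: in Step 2 you use (H1) as a global lower bound on $F$ to get $\eta\in L^2$, so $f$ enters through more than the behaviour of $F$ near $|\psi|=1$.
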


\begin{rem}\label{remarque: non continuité des application Phi}
    An important difficulty due to~\eqref{nonvanishing condition à l'infini} can be highlighted in light of~\cite{Mohamad1}. Note that the mapping $\Phi$ is an homeomorphism when the domain is endowed with the metric $d$ and $\Nenergysethydro\times \R/2\pi\Z$ with the natural norm $\normX{.} + |.|_{\R/2\pi\Z}$. However, $\Phi^{-1}$ is not locally Lipschitz continuous, and counter examples are provided in the same article. These topological pitfalls have been overcome in~\cite{Wegn1}, in the sense that a bilipschitz correspondence has been found, between $\Nenergysethydro$ and a well-chosen metric space, related to the original setting.
\end{rem}

Now we give the correspondence between the solution of~\eqref{NLS} and the solution of~\eqref{NLShydro}. Compared to the Gross-Pitaevskii case, which is considered in~\cite{Mohamad1}, the next proposition depends on $f$ but the only difference lies in the replacement
of the linear term $\eta$ by the general nonlinearity $f(1-\eta)$. However, the regularity assumption $f\in C^{3}(\mathbb{R}_+)$ is enough to repeat the arguments of Section 3.2 in~\cite{Mohamad1} and then give a proper proof of the next result.

\begin{prop}\label{prop:Mohamad_3_6_f}
Let $(\eta,v)\in C^0\big((-T_{\max},T_{\max});\mathcal{NX}_{hy}(\mathbb{R})\big)$ be a solution to~\eqref{NLShydro}. Then there exists a function
\[
\Theta \in C^1\big((-T_{\max},T_{\max}),\R\big)
\]
such that the function $\Psi$ defined by
\[
\Psi(t,x)
=
\sqrt{1-\eta(t,x)}
\exp\!\left(
-i\int_0^x v(t,y)\,dy + i \Theta(t)
\right) = \Phi\big((\eta(t,x),v(t,x),\Theta(t)\big)
\]
is a solution to~\eqref{NLS}
in $C^0\big((-T_{\max},T_{\max});\mathcal{NX}(\mathbb{R})\big)$.
\end{prop}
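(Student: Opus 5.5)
The plan is to reconstruct the phase from the hydrodynamical variables and to choose the time-dependent constant $\Theta(t)$ so that the phase equation is satisfied exactly. We first work at a regular level: take $(\eta,v)\in C^0\big((-T_{\max},T_{\max}),\mathcal{NX}_{hy}^2(\R)\big)$ given by Theorem~\ref{théoreme: LWP of NLShydro} with $l=2$. At this level $\partial_t\eta$ and $\partial_t v$ are continuous in time with values in $L^2$ (respectively $H^{-1}$), and all the manipulations below are justified. Write $\rho=\sqrt{1-\eta}$ and $\varphi(t,x)=-\int_0^x v(t,y)\,dy+\Theta(t)$. Inserting $\Psi=\rho e^{i\varphi}$ into~\eqref{NLS} and splitting into imaginary and real parts gives two equations. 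The imaginary part is $\partial_t\rho^2=-2\partial_x(\rho^2\partial_x\varphi)$, which is exactly the first line of~\eqref{NLShydro}. The real part is the Bernoulli-type equation
\begin{equation*}
\partial_t\varphi=\frac{\partial_x^2\rho}{\rho}-(\partial_x\varphi)^2+f(\rho^2).
\end{equation*}

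Next we differentiate this Bernoulli equation in $x$; doing so recovers precisely the second line of~\eqref{NLShydro}, after rewriting $\partial_x^2\rho/\rho$ in terms of $\eta$. Consequently the quantity
\begin{equation*}
G(t,x):=\partial_t\varphi-\frac{\partial_x^2\rho}{\rho}+(\partial_x\varphi)^2-f(\rho^2)
\end{equation*}
is independent of $x$. We then evaluate it at $x=0$ and define $\Theta$ so that it vanishes:
\begin{equation*}
\Theta'(t)=-\int_0^0\partial_t v\,dy+\Big(\frac{\partial_x^2\rho}{\rho}-v^2+f(1-\eta)\Big)(t,0).
\end{equation*}
More cleanly, $\partial_t\int_0^x v=\int_0^x\partial_tv$, and by the second equation of~\eqref{NLShydro} this integral equals the $x=0$ value of the bracket minus its $x$ value. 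The resulting expression for $\Theta'(t)$ involves only $\eta(t,0)$, $\partial_x^2\eta(t,0)$, $\partial_x\eta(t,0)$ and $v(t,0)$, which are continuous in $t$ by the Sobolev embedding $H^3\times H^2\subset C^2\times C^1$. Hence $\Theta\in C^1$, and $\Psi$ is a classical (strong) solution of~\eqref{NLS}.

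The main obstacle is passing to the energy level $\mathcal{NX}_{hy}(\R)=H^1\times L^2$. There the point values $\partial_x^2\eta(t,0)$ and $v(t,0)$ no longer make sense, so the formula above must be replaced by a weak one. We would follow Section~3.2 of~\cite{Mohamad1}. First, test against a fixed $\chi\in C_c^\infty$ with $\int\chi=1$, which gives
\begin{equation*}
\Theta'(t)=\int_\R\chi\Big(\partial_t\textstyle\int_0^x v+\frac{\partial_x^2\rho}{\rho}-v^2+f(1-\eta)\Big)dx.
\end{equation*}
Second, integrate by parts to remove $\partial_x^2\rho$, obtaining the terms $-\int\chi'\partial_x\rho/\rho+\int\chi(\partial_x\rho)^2/\rho^2$. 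Third, express $\partial_t\int_0^xv$ through the second equation of~\eqref{NLShydro} in a similarly integrated form. Together these yield a formula that is continuous on $\mathcal{NX}_{hy}$, using that $\inf\rho>0$ locally uniformly in time. We then approximate the $H^1\times L^2$ data by $H^3\times H^2$ data and invoke the continuity of the flow map (Theorem~\ref{théoreme: LWP of NLShydro}) together with the continuity of $\Phi$ from Proposition~\ref{prop:Mohamad_1_1_f}. This shows that the regular solutions $\Psi_n$ converge locally uniformly in time in the metric $d$, and that $\Theta_n\to\Theta$ in $C^1_{\loc}$. Finally, by the uniqueness in Theorem~\ref{théorème: local global well-posedness of cauchy problem}, the limit $\Psi$ is the solution of~\eqref{NLS} in $C^0\big((-T_{\max},T_{\max}),\mathcal{NX}(\R)\big)$. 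The regularity $f\in C^3$ is used only to justify continuity of $f(1-\eta)$ and of the flow at these levels.
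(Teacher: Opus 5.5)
Your regular-level computation is correct. For $\Psi=\rho e^{i\varphi}$, the imaginary part of~\eqref{NLS} is the continuity equation, and the $x$-derivative of the Bernoulli equation is the $v$-equation. Writing $B:=f(1-\eta)-v^2+\partial_x^2\rho/\rho$, this forces $\Theta'(t)=B(t,0)$, which is continuous for $(\eta,v)\in C^0(\mathcal{NX}^2_{hy}(\R))$. Your overall strategy (regularize, then pass to the limit following Section~3.2 of~\cite{Mohamad1}) is also the one the paper invokes, since its proof consists only of that reference.

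\textbf{The gap: your third step at the energy level.} Take $\chi\in C_c^\infty(\R)$ with $\int\chi=1$. Then $\int_\R\chi(x)\int_0^x v(t,y)\,dy\,dx=\int_\R\kappa\,v(t)$, where $\kappa(y)=\int_y^{+\infty}\chi$ for $y>0$ and $\kappa(y)=-\int_{-\infty}^y\chi$ for $y<0$. This $\kappa$ has a unit jump at the origin, so $\kappa'=-\chi+\delta_0$. Substituting $\partial_t v=-\partial_x B$ gives $\frac{d}{dt}\int_\R\kappa v=B(t,0)-\int_\R\chi B$. Your tested formula therefore collapses back to $\Theta'(t)=B(t,0)$, for every choice of $\chi$.

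This is forced: $\Theta(t)$ is the argument of $\Psi(t,0)$, so $\Theta'$ must be the value of $B$ at the origin. That value is meaningless, and certainly not continuous in $t$, for $(\eta,v)\in H^1\times L^2$. No integration by parts removes a point evaluation. So there is no expression for $\Theta'$ that is continuous on $\mathcal{NX}_{hy}(\R)$, and your claim that $\Theta_n\to\Theta$ in $C^1_{\loc}$ is unsupported.

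\textbf{What your testing idea does give.} It yields the integrated identity
\[
\Theta_n(t)=\Theta_n(0)+\int_\R\kappa\big(v_n(t)-v_n(0)\big)+\int_0^t\Big\langle\chi,B_n(s)\Big\rangle\,ds .
\]
After your integration by parts in $\langle\chi,\partial_x^2\rho/\rho\rangle$, every term converges locally uniformly in time by continuity of the hydrodynamical flow. This gives three things:
\begin{itemize}
\item a continuous $\Theta$;
\item locally uniform convergence of $\Psi_n=\Phi(Q_n,\Theta_n)$ for $d$;
\item that $\Psi$ solves~\eqref{NLS} in $\mathcal{D}'$ and lies in $C^0(\mathcal{NX}(\R))$, obtained by passing to the limit directly in the equation.
\end{itemize}
Your appeal to Gallo's uniqueness is unnecessary. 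It would in any case require $\Psi(t)-\Psi(0)\in C^0(H^1)$, which does not follow from convergence in $d$.

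\textbf{Consequence for the $C^1$ claim.} $C^1$ regularity of $\Theta$ is equivalent to that of $t\mapsto\int\kappa v(t)$. Formally, its derivative is $B(t,0)-\langle\chi,B(t)\rangle$. One should not expect this at the energy level: already for the free Schr\"odinger flow, the trace $t\mapsto u(t,0)$ of $H^1$ data need not be differentiable (e.g.\ data behaving like $|x|^{0.6}$ near $0$ give $u(t,0)-u(0,0)\sim t^{0.3}$). So your argument establishes $\Theta\in C^1$ only for solutions in $C^0(\mathcal{NX}^2_{hy}(\R))$, and only $\Theta\in C^0$ in general. The paper's one-line reference does not address this point either.
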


\subsection{Estimate in a neighborhood of a chain}

The hydrodynamical and classical variables $Q_c=(\eta_c,v_c)$ are related to the travelling wave by the formula
    \begin{equation}\label{formule de gu_c classique en fonction eta_c et v_c hydrodynamique}
        \gu_c(x)=\sqrt{1-\eta_c(x)}e^{-i\int_0^x v_c(r)dr}.
    \end{equation}

By Remark~\ref{remarque: travelling wave onde progressive en 0}, we also deduce that $\max_\R\eta_c = \eta_c(0)$ and in particular that $\eta_c'(0)=0$.
From~\cite{Bert2}, we recall that there exist $a_{d},K_{d}>0$ independent of $c\in (c_{\min},c_s)$ and $x\in\R$ such that,
\begin{equation}\label{estimée décroissance exponentielle à tout ordre pour eta_c et v_c}
\sum_{\substack{0\leq k_1\leq 3\\ 0\leq k_2\leq 2}}\Big(|\partial_x^{k_1} \eta_c(x)|+c^{1+2k_2}|\partial_x^{k_2} v_c(x)|\Big)\leq K_{d}e^{-a_{d}\sqrt{c_s^2-c^2} |x|}.
\end{equation}

\begin{rem}\label{définition K_lip}
For any $\gc^*\in (c_{\min},c_s)^N$ and for any closed ball $\mathcal{B}\subset (c_{\min},c_s)^N$ centered in $\gc^*$, there exists a uniform constant denoted by $K_{lip}$, such that for any $\gc\in\mathcal{B}$ and $\ga^*,\ga\in\R^N$,
\begin{equation*}
    \normX{R_{\gc^*,\ga^*}-R_{\gc,\ga}}\leq K_{lip}\Big(|\gc^*-\gc| + |\ga^*- \ga|\Big).
\end{equation*}
\end{rem}

A direct consequence of~\eqref{estimée décroissance exponentielle à tout ordre pour eta_c et v_c} is the following lemma, which is proved in~\cite{Bert2}.
\begin{lem}[\cite{Bert2}]\label{lemme: norm Lii de eta_c^* inférieur à delta}
    Let $\gc^*\in\adm_N$. Then there exists $l_1>0$ and $M_1\in (0,1)$ such that for any $\ga\in\pos_N(l_1)$, we have \begin{equation*} \normLii{\eta_{\gc^*,\ga}}\leq M_1.
    \end{equation*} 
\end{lem}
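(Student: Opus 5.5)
The idea is to use the uniform exponential decay estimate~\eqref{estimée décroissance exponentielle à tout ordre pour eta_c et v_c} to bound the sum $\eta_{\gc^*,\ga}=\sum_{k=1}^N\eta_{c_k^*,a_k}$ pointwise. The contributions of the solitons that are far from a given point $x$ are exponentially small, and only the closest one matters. Fix $\gc^*\in\adm_N$ and set
\[
\mu:=a_d\min_{1\le k\le N}\sqrt{c_s^2-(c_k^*)^2}>0,
\]
which is positive since each $c_k^*<c_s$. By~\eqref{estimée décroissance exponentielle à tout ordre pour eta_c et v_c} with $k_1=0$, we have $|\eta_{c_k^*}(y)|\le K_d e^{-\mu|y|}$ for all $y\in\R$ and all $k$.

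Next, let $m_k:=\max_\R|\eta_{c_k^*}|$. Since $\gu_{c_k^*}$ is non-vanishing, $\eta_{c_k^*}=1-|\gu_{c_k^*}|^2<1$ everywhere. Because $\eta_{c_k^*}$ is continuous and tends to $0$ at infinity, its supremum is attained, so $\max_\R\eta_{c_k^*}<1$. To control the absolute value we also need a lower bound on $\eta_{c_k^*}$. If $\eta_{c_k^*}\ge0$ (the dark-soliton profile, with maximum $\eta_{c_k^*}(0)$ by Remark~\ref{remarque: travelling wave onde progressive en 0}), then $m_k=\eta_{c_k^*}(0)<1$. Otherwise, $\inf\eta_{c_k^*}$ is a finite negative number. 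In that case I would note that the statement is really used for the upper bound $\max\eta<1$ (non-vanishing), together with a bound on $|\eta|$ that is strictly below $1$ only where it matters. If only $m_k<1$ for the negative part is unclear, I would reduce to showing $\sup\eta_{\gc^*,\ga}\le M_1<1$ and $\inf\eta_{\gc^*,\ga}\ge -C$. I expect, however, that the known profiles satisfy $0\le\eta_c<1$, so that $m:=\max_k m_k<1$.

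Now take $x\in\R$ and $\ga\in\pos_N(l_1)$. Choose $j$ minimizing $|x-a_j|$. Since the $a_k$ are ordered with gaps larger than $l_1$, for $k\ne j$ we get $|x-a_k|\ge |k-j|\,l_1/2$. Hence
\[
|\eta_{\gc^*,\ga}(x)|\le m+\sum_{k\ne j}K_de^{-\mu|x-a_k|}\le m+2K_d\sum_{n\ge1}e^{-\mu n l_1/2}=m+\frac{2K_de^{-\mu l_1/2}}{1-e^{-\mu l_1/2}}.
\]
Choose $l_1$ large enough that the tail term is at most $(1-m)/2$, and set $M_1:=(1+m)/2\in(0,1)$. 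This gives $\normLii{\eta_{\gc^*,\ga}}\le M_1$ uniformly in $\ga\in\pos_N(l_1)$.

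The main point needing care is the single-soliton bound $m<1$, i.e., controlling $|\eta_{c_k^*}|$ and not just its maximum. This rests on the non-vanishing of $\gu_c$ from Theorem~\ref{théorème: il existe une branche C^1 de solitons proche de c_s} and on the sign and shape of the profile. The interaction estimate itself is a routine geometric sum.
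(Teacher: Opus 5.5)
\textbf{There is a genuine gap: you do not establish the single-soliton bound $m=\max_k\normLii{\eta_{c_k^*}}<1$, on which the argument rests.}

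Your overall strategy is what the paper means by calling this lemma a direct consequence of the exponential decay estimate (it cites \cite{Bert2} for the proof). That strategy is the uniform decay of each profile, the separation $\ga\in\pos_N(l_1)$, and the nearest-soliton geometric sum. The tail part is correct, including $|x-a_k|\geq |k-j|\,l_1/2$ for $k\neq j$.

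Non-vanishing of $\gu_c$ only gives the upper bound $\eta_{c_k^*}<1$. You leave the lower bound $\eta_{c_k^*}>-1$ as an expectation. The decay estimate cannot supply it, since it only gives $|\eta_c(0)|\leq K_d$, with no reason for $K_d<1$. Your fallback ($\sup\eta_{\gc^*,\ga}\leq M_1$ and $\inf\eta_{\gc^*,\ga}\geq -C$) would suffice for how the lemma is used later, since only non-vanishing matters there. It is not, however, the stated $L^\infty$ bound.

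The missing ingredient is that the profiles are dark, i.e.\ $0<\eta_c\leq\eta_c(0)<1$. You can derive this from \eqref{TWC}. Write $\gu_c=\sqrt{1-\eta_c}\,e^{i\varphi_c}$. The imaginary part integrates, using the decay at infinity, to $v_c=-\varphi_c'=\frac{c\eta_c}{2(1-\eta_c)}$. The real part then gives a second-order autonomous ODE for $|\gu_c|$, which has $|\gu_c|\equiv 1$ as an equilibrium because $f(1)=0$. It also gives the first integral
\begin{equation*}
(\eta_c')^2=4(1-\eta_c)F(1-\eta_c)-c^2\eta_c^2 .
\end{equation*}
Suppose $\eta_c(x_0)=0$ at some finite $x_0$. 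Then $\eta_c'(x_0)=0$ as well, and Cauchy--Lipschitz uniqueness forces $|\gu_c|\equiv1$. This gives $v_c\equiv 0$, so $\gu_c$ is constant, which is excluded. Hence $\eta_c$ has a constant sign.

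The normalization of Remark~\ref{remarque: travelling wave onde progressive en 0} says the maximum $\max_\R\eta_c=\eta_c(0)$ is attained. This rules out $\eta_c<0$, since then $\sup_\R\eta_c=0$ would not be attained. Therefore $0<\eta_{c_k^*}\leq\eta_{c_k^*}(0)<1$, so $m=\max_k\eta_{c_k^*}(0)<1$. Your argument then closes with $M_1=(1+m)/2$.
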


According to Remark~\ref{remarque: non continuité des application Phi}, the mapping through original and hydrodynamical frameworks is not Lipschitz-continuous. However, we show the topological correspondence of both frameworks, at least in a neighborhood of a decoupled sum of travelling waves.

\begin{lem}\label{lemme: équivalence des distances classique et hydro proche d'une chaine de soliton}
Let $\gc^*\in \adm_N$ and $\ga\in\R^N$. There exists a ball $\mathcal{B}_*\subset \mathcal{X}(\R)$ of radius $r_*$ centered in $R_{\gc^*,\ga,0}^{or}$ for the metric $d$ and two positive constants $A_2,l_2,m_2,M_2$ depending only on $\gc^*$ so that the following holds. Assume that $u\in\mathcal{B}_*$ can be lifted as $u=|u| e^{i\varphi}$ with $m_2\leq \inf_\R |u|\leq\sup_{\R}|u|\leq M_2$ and $\varphi(0)=\varphi_{\gc^*,\ga}(0)$. Then writing $Q=(1-|u|^2,-\varphi')\in \energysethydro$, we have, for any $\ga\in\pos_N(l_2)$
\begin{equation*}
    d\left(u,R_{\gc^*,\ga,0}^{or}\right)\leq A_2(1+|\ga|)\normX{Q-R_{\gc^*,\ga}^{hy}}.
\end{equation*}

Moreover, for any $\Theta\in\R$,
\begin{equation}\label{controle norme X par distance d}
    \normX{Q-R_{\gc^*,\ga}^{hy}}\leq A_2(1+|\ga|)d\left(u,R_{\gc^*,\ga,\Theta}^{or}\right).
\end{equation}

\end{lem}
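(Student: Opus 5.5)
We compare $u=|u|e^{i\varphi}$ with $R^{or}_{\gc^*,\ga,0}=\sqrt{1-\eta_{\gc^*,\ga}}\,e^{i\varphi_{\gc^*,\ga}}$, where $\varphi_{\gc^*,\ga}(x)=\varphi_{\gc^*,\ga}(0)-\int_0^x v_{\gc^*,\ga}$. Write $Q=(\eta,v)$ and $R^{hy}_{\gc^*,\ga}=(\eta_{\gc^*,\ga},v_{\gc^*,\ga})$. We treat the three pieces of the metric $d$ in \eqref{métrique d} one at a time. The term $\normLdeux{|u|^2-|R^{or}|^2}$ is exactly $\normLdeux{\eta-\eta_{\gc^*,\ga}}$, so it is trivially comparable to $\normX{Q-R^{hy}_{\gc^*,\ga}}$. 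Lemma~\ref{lemme: norm Lii de eta_c^* inférieur à delta} gives $1-\eta_{\gc^*,\ga}\geq 1-M_1>0$ for $\ga\in\pos_N(l_1)$. Together with the assumption $m_2\leq|u|\leq M_2$, this keeps every modulus involved in a compact subset of $(0,\ii)$, so $\sqrt{\cdot}$ and $1/\sqrt{\cdot}$ are Lipschitz there. We take $l_2\geq l_1$.

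For the first inequality, the derivative term is handled by writing
\begin{equation*}
u'=\Big(\dfrac{-\eta'}{2\sqrt{1-\eta}}-iv\sqrt{1-\eta}\Big)e^{i\varphi}.
\end{equation*}
We subtract the same expression for $R^{or}$ and split $u'-R^{or\,\prime}$ into an amplitude part and a phase part. The amplitude part is bounded by $\normLdeux{\eta'-\eta_{\gc^*,\ga}'}+\normLdeux{v-v_{\gc^*,\ga}}+\normLdeux{\eta-\eta_{\gc^*,\ga}}$, using the uniform bounds on $\eta_{\gc^*,\ga}'$ and $v_{\gc^*,\ga}$ from \eqref{estimée décroissance exponentielle à tout ordre pour eta_c et v_c} and $H^1\subset L^\ii$. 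The phase part is $|e^{i\varphi}-e^{i\varphi_{\gc^*,\ga}}|\,|R^{or\,\prime}|$. Here we use $|e^{i\varphi}-e^{i\varphi_{\gc^*,\ga}}|\leq|\varphi-\varphi_{\gc^*,\ga}|$ and the fact that the phases agree at $0$:
\begin{equation*}
|\varphi(x)-\varphi_{\gc^*,\ga}(x)|\leq |x|^{1/2}\normLdeux{v-v_{\gc^*,\ga}}.
\end{equation*}
We then need $|x|^{1/2}|R^{or\,\prime}(x)|\in L^2$. By \eqref{estimée décroissance exponentielle à tout ordre pour eta_c et v_c}, $|R^{or\,\prime}|\lesssim\sum_k e^{-a|x-a_k|}$, hence
\begin{equation*}
\big\||x|^{1/2}R^{or\,\prime}\big\|_{L^2}\lesssim\sum_k(1+|a_k|)^{1/2}\lesssim 1+|\ga|.
\end{equation*}
This is exactly where the factor $(1+|\ga|)$ comes from. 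The $L^\ii([-1,1])$ term is simpler: the same phase estimate with $|x|\leq1$, plus the Lipschitz bound on the moduli, gives a constant times $\normX{Q-R^{hy}_{\gc^*,\ga}}$.

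For the converse \eqref{controle norme X par distance d}, we first remove $\Theta$. Since $\normX{Q-R^{hy}_{\gc^*,\ga}}$ does not see the constant phase, and $d(u,R^{or}_{\gc^*,\ga,\Theta})$ controls $d(u,R^{or}_{\gc^*,\ga,0})$ up to the phase mismatch at $0$, this reduction is legitimate provided $\Theta$ is handled correctly. The point needing care is that $u$ is normalized by $\varphi(0)=\varphi_{\gc^*,\ga}(0)$ while $\Theta$ is arbitrary. We therefore argue directly with the derivative identity below, which involves only $|u'|$-type quantities and moduli, plus the $L^\ii([-1,1])$ term to fix the phase near $0$ when needed. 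The $\eta$ part is immediate: $\normLdeux{\eta-\eta_{\gc^*,\ga}}$ appears directly in $d$, and $\eta'=-2\,\mathrm{Re}(\bar u u')$. Hence
\begin{equation*}
\eta'-\eta_{\gc^*,\ga}'=-2\,\mathrm{Re}\big(\bar u(u'-R')+(\bar u-\bar R)R'\big),
\end{equation*}
and similarly
\begin{equation*}
v=\dfrac{\mathrm{Im}(\bar u u')}{|u|^2}\quad\text{(with the sign convention of Proposition~\ref{prop:Mohamad_1_1_f})},
\end{equation*}
with $|u|\geq m_2$.

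The term $(\bar u-\bar R)R'$ is again the obstacle, because $d$ gives no global $L^\ii$ control of $u-R$. We bound $|u-R|$ by $\big||u|-|R|\big|+|R|\,|e^{i(\varphi-\varphi_R-\Theta)}-1|$. The first piece is controlled in $L^2$ by $\normLdeux{\eta-\eta_{\gc^*,\ga}}$. For the second, we control the phase difference through $\int_0^x(v-v_{\gc^*,\ga})$, which seems circular. To break the circularity we integrate $u'-R'$ from $0$ instead: $|u(x)-R(x)|\leq|u(0)-R(0)|+|x|^{1/2}\normLdeux{u'-R'}$. The value $|u(0)-R(0)|$ is bounded by $d$ through the $L^\ii([-1,1])$ term, so the pointwise difference is controlled by $(1+|x|^{1/2})\,d$. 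Multiplying by $|R'|$, which decays exponentially around each $a_k$, and taking $L^2$ norms gives again the weight $(1+|\ga|)$. This yields \eqref{controle norme X par distance d}. The radius $r_*$ is fixed only so that these lifts and bounds are valid.
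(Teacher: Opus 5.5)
Your proposal is correct and follows the paper's route. For the first inequality you use the same modulus/phase splitting of each term of $d$: the phase mismatch is bounded by $\sqrt{|x|}\,\normLdeux{v-v_{\gc^*,\ga}}$ thanks to the normalization at $0$, and the weight $\sqrt{|x|}$ against the exponentially decaying $R'$ produces the factor $(1+|\ga|)$. For the converse, where the paper only points to Lemma A.1 of its earlier work with the same weighted term, your argument via $\eta'=-2\,\mathrm{Re}(\bar u u')$, the formula for $v$, and $|u(x)-R(x)|\leq|u(0)-R(0)|+|x|^{1/2}\normLdeux{u'-R'}$ is exactly that mechanism. It handles arbitrary $\Theta$ directly because $\eta$ and $v$ are gauge invariant, so your preliminary discussion about removing $\Theta$ is unnecessary.
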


\begin{proof}
Let us proceed with each term in $d$ separately. The term $\normLdeux{|u|^2 - |R_{\gc^*,\ga,0}^{or}|^2}$ can be directly bounded by $\normX{Q-R_{\gc^*,\ga}^{hy}}$. Regarding the local $L^{\ii}$-norm, we invoke Lemma~\ref{lemme: norm Lii de eta_c^* inférieur à delta}, which leads to $\inf_\R |\gu_{\gc^*,\ga}|\geq 1-m_1>0$, and by Lipschitz continuity of $x\mapsto e^{ix}$, and $\varphi(0)=\varphi_{\gc^*,\ga}(0)$, we deduce, for $x\in [-1,1]$, that
\begin{align*}
    \Big|u(x)-R^{or}_{\gc^*,\ga,0}(x)\Big| &\leq \Big||u(x)|-|R^{or}_{\gc^*,\ga,0}(x)|\Big| + |R_{\gc^*,\ga,0}^{or}(x)| \left|e^{i\varphi(x)}- e^{i\varphi_{\gc^*,\ga}(x)}\right|\\
    &\leq \dfrac{ \Big||u(x)|^2-|R^{or}_{\gc^*,\ga,0}(x)|^2\Big| }{ |u(x)|+|R^{or}_{\gc^*,\ga,0}(x)| } + |R^{or}_{\gc^*,\ga,0}(x)|\left(|\varphi(0)-\varphi_{\gc^*,\ga}(0)| + \int_0^x \left|v- v_{\gc^*,\ga^* }\right| \right)\\
    &\leq \dfrac{\normLii{\eta-\eta_{\gc^*,\ga}}}{1-m_1}+ |\gu_{\gc^*,\ga}(x)|\sqrt{|x|}\normLdeux{v^{(N)}-v_{\gc^*,\ga^* }}.
\end{align*}

Hence, bounding roughly $|R_{\gc^*,\ga^*}^{or}(x)\sqrt{|x|}|$ by a constant $A_2>0$, we can assume that 
\begin{equation*}
    \Vert u-R^{or}_{\gc^*,\ga^*,0}(t)\Vert_{L^\ii([-1,1])}\leq A_2\normX{Q-R^{hy}_{\gc^*,\ga^*}(t)}
\end{equation*}

Finally, using again $\varphi(0)=\varphi_{\gc^*,\ga}(0)$, we compute
\begin{align}
    \normLdeux{u' - (R_{\gc^*,\ga}^{or})'}&\leq \normLdeux{\dfrac{\eta'}{2\sqrt{1-\eta}}-\dfrac{\eta_{\gc^*,\ga}'}{2\sqrt{1-\eta_{\gc^*,\ga}}}} + \normLdeux{\varphi'\sqrt{1-\eta}-\varphi_{\gc^*,\ga}'\sqrt{1-\eta_{\gc^*,\ga}}}\notag\\
    &\quad \quad  +\normLdeux{\left(\dfrac{\eta_{\gc^*,\ga}'}{2\sqrt{1-\eta_{\gc^*,\ga}}}+i\varphi_{\gc^*,\ga}'\sqrt{1-\eta_{\gc^*,\ga}}\right)(e^{i\varphi}- e^{i\varphi_{\gc^*,\ga}})}\notag\\
    &\leq \dfrac{\normLii{\eta_{\gc^*,\ga}'}}{2}\normLdeux{\dfrac{1}{\sqrt{1-\eta_{\gc^*,\ga}}}-\dfrac{1}{\sqrt{1-\eta}}} + \normLdeux{\eta_{\gc^*,\ga}'-\eta'}\normLii{\dfrac{1}{\sqrt{1-\eta}}}\label{estimée u'}\\
    &\quad \quad +\normLii{\sqrt{1-\eta}}\normLdeux{v-v_{\gc^*,\ga}} + \normLii{v_{\gc^*,\ga}}\normLdeux{\sqrt{1-\eta} - \sqrt{1-\eta_{\gc^*,\ga}}}\notag\\
    &\quad \quad +\normLdeux{\sqrt{|x|}\left(\dfrac{\eta_{\gc^*,\ga}'}{2\sqrt{1-\eta_{\gc^*,\ga}}}+i\varphi_{\gc^*,\ga}'\sqrt{1-\eta_{\gc^*,\ga}}\right)}\normLdeux{v- v_{\gc^*,\ga}}\notag.
\end{align}

For $\iota\in\{-\frac{1}{2},\frac{1}{2}\}$, $x\mapsto|x|^\iota$ is Lipschitz continuous on any complement set of a ball centered in $0$, and more precisely, we have 
\begin{equation*}
    \left|(1-\eta_{\gc^*,\ga})^\iota-(1-\eta)^\iota\right|\leq\dfrac{|\eta-\eta_{\gc^*,\ga}|}{\min(1-m_1,m_2)^{1-\iota }},
\end{equation*}

hence, using the exponential decay~\eqref{estimée décroissance exponentielle à tout ordre pour eta_c et v_c}, and up to taking a larger constant $A_*$, we deduce from~\eqref{estimée u'} that
\begin{align*}
    \normLdeux{u' - (R_{\gc^*,\ga}^{or})'}&\leq \dfrac{K_d}{2\min(1-m_1,m_2)^\frac{3}{2}}\normLdeux{\eta_{\gc^*,\ga}-\eta} + \dfrac{1}{\sqrt{m_2}}\normLdeux{\eta_{\gc^*,\ga}'-\eta'} \\
    &\quad + M_2\normLdeux{v_{\gc^*,\ga}-v} + \dfrac{K_d}{\min(1-m_1,m_2)^\frac{1}{2}\min_{j\in\{1,...,N\}}|c_j^*|}\normLdeux{\eta_{\gc^*,\ga}-\eta}\\
    &\quad +K_d\left(\dfrac{1}{2\sqrt{m_2}}+\dfrac{\sqrt{M_2}}{\min_{j\in\{1,...,N\}}|c_j^*|}\right)\sum_{j=1}^N \normLdeux{e^{-\sqrt{c_s^2- (c_j^*)^2}|x- a_j|}\sqrt{|x|}} \normLdeux{v_{\gc^*,\ga}-v} \\  
    &\leq A_2 \left(1 + \sum_{j=1}^N \normLdeux{\sqrt{|x|}e^{-\sqrt{c_s^2- (c_j^*)^2}|x- a_j|}}\right)\normX{Q - Q_{\gc^*,\ga}}.
\end{align*}

It remains to give a precise estimate of the quantity
\begin{equation}\label{terme a controler linéaire en a}
    \int_\R |x|e^{-2\sqrt{c_s^2- (c_j^*)^2}|x- a_j|}dx,
\end{equation}
which is linearly controlled in $|\ga|=\max_{j\in\{1,...,N\}}|a_j|$, with a constant only depending on $\gc^*$.

Now, we follow step by step the proof of Lemma A.1 in~\cite{Bert3} to prove the converse inequality~\eqref{controle norme X par distance d}. The only difference lies in the estimate of the term $\normLdeux{\sqrt{|x|}R^{or}_{\gc^*,\ga,\Theta}}$, but can be controlled linearly in $\ga$, similarly to~\eqref{terme a controler linéaire en a}.

\end{proof}

\section{Asymptotic stability of a well-prepared chain}\label{section: Asymptotic stability of a well-prepared chain}

In this section, we prove the asymptotic stability of a well-prepared chain of solitons, i.e. Theorem~\ref{théorème: stab asymp classical of a chain}. The proof of this result will be partitioned into two parts, one dedicated to the region taking into account the whereabouts of the solitons, and another between each soliton. In view of the special configuration of such a well-prepared chain, the interaction of each soliton with one another will become all the more negligible, hence the first part can be reduced to the same argument as for the asymptotic stability of a single travelling wave. To do this, we switch to the hydrodynamical framework, and we prove the asymptotic stability of a well-prepared chain, in this setting, namely Theorem~\ref{théorème: stab asymp classical of a chain hydro} below. In Subsection~\ref{section: From orbital to asymptotic stability}, we state a preliminary result on a weaker form of stability of a well-prepared chain of solitons, crucial for the asymptotic stability. In Subsection~\ref{section: Proof of the intermediate results}, we state a Liouville-type result, which implies the proof of Theorem~\ref{théorème: stab asymp classical of a chain hydro} under some decay properties of the limiting profiles constructed in Subsection~\ref{section: From orbital to asymptotic stability}. Subsection~\ref{section: Proof of the algebraic decay} is dedicated to proving the previous decay properties, through a monotonicity formula. Finally in Subsection~\ref{section: Asymptotic stability in the original framework}, we show how the preceding subsections articulate and imply Theorem~\ref{théorème: stab asymp classical of a chain hydro} and then we recover the asymptotic stability of a chain in the original setting and conclude the proof of Theorem~\ref{théorème: stab asymp classical of a chain}.

\subsection{From orbital to asymptotic stability}\label{section: From orbital to asymptotic stability}

The proof of the asymptotic stability result is based on the orbital stability of a well-prepared chain of solitons in the hydrodynamical setting, that was proved in~\cite{Bert2}. This will eventually lead us to the asymptotic stability of a hydrodynamical chain of solitons.

\begin{thm}[\cite{Bert2}]\label{théorème: stabilité orbitale}
    Let $\gc^*\in\adm_N$. Assume that~\eqref{hypothèse de croissance sur F minorant intermediaire},~\eqref{hypothèse de croissance sur F majorant} and~\eqref{condition suffisante pour la stabilité orbitale sur f''(1)+6f'(1)>0} hold. There exist positive constants $\alpha_*,l_*,A_*,\tau_*,\iota_* >0$, such that the following holds. If $Q_0=(\eta_0,v_0)\in\Nenergysethydro$ is such that for some $\ga^0:=(a_1^0,...,a_N^0)\in \pos_N (L_0)$ with $L_0\geq l_*$,
\begin{equation*}
    \beta_0:=\normX{Q_0-R^{hy}_{\gc^*,\ga^0}}\leq \alpha_*,
\end{equation*}

then, the unique solution $Q(t)=\big(\eta(t),v(t)\big)$ to~\eqref{NLShydro} associated with the initial datum $(\eta_0,v_0)$ is globally defined and there exists $(\ga,\gc)\in C^1(\R_+,\R^{2N})$ such that the function $\varepsilon=(\varepsilon_\eta,\varepsilon_v)$ defined by the formula $Q(t)=R^{hy}_{\gc(t),\ga(t)} + \varepsilon(t)$ satisfies for any $t\in\R_+$,
\begin{equation}\label{borne uniforme dans stabilité orbitale}
    \normX{\varepsilon(t)}+\normR{ \gc(t)-\gc^*}\leq A_* K(\beta_0,L_0),
\end{equation}

with $K(\beta_0,L_0)= \beta_0+e^{-\tau_* L_0 }$. Regarding the modulation parameters, we have
\begin{equation}\label{controle uniforme parametre modulation stab orbitale}
    \normR{ \ga'(t)-\gc(t)}^2+\normR{ \gc'(t)}\leq A_*\Big(\normX{\varepsilon(t)}^2 +  e^{-\tau_*L_0}\Big).
\end{equation}

Moreover, for any $t\in\R$, \begin{equation}\label{proposition: borne uniforme sur les quantités nu_c etc}
    \min_{j\in\{1,...,N\}}\left\{c_j(t),\sqrt{c_s^2 - c_j(t)^2},\inf_\R(1-\eta_{c_j(t)}),\inf_\R \big(1-\eta(t)\big)\right\}\geq \iota_*.
\end{equation}

\end{thm}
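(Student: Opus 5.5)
The statement to prove is Theorem~\ref{théorème: stabilité orbitale}, the orbital stability of a well-prepared hydrodynamical chain, cited from~\cite{Bert2}. I will use the standard Martel--Merle--Tsai strategy: modulation, a localized almost-conserved functional built from energy and localized momenta, and coercivity of its quadratic part.

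\textbf{Step 1: modulation.} While $Q(t)$ stays in a small tube around $\{R^{hy}_{\gc,\ga}\}$ with $\ga\in\pos_N(L_0/2)$, I apply the implicit function theorem to obtain $C^1$ parameters $(\gc(t),\ga(t))$. They are fixed by $2N$ orthogonality conditions on $\varepsilon$, namely orthogonality to $\partial_x Q_{c_j,a_j}$ and to $\partial_c Q_{c_j,a_j}$, or to a suitable pair dual to the kernel and generalized kernel of the linearized operator. The nondegeneracy needed here comes from the $C^1$ branch of Theorem~\ref{théorème: il existe une branche C^1 de solitons proche de c_s} and from $\frac{d p(Q_c)}{dc}\neq 0$, which is where~\eqref{condition suffisante pour la stabilité orbitale sur f''(1)+6f'(1)>0} enters.

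\textbf{Step 2: modulation estimates.} I differentiate the orthogonality conditions and substitute~\eqref{NLShydro}. The interaction terms between distinct solitons are $\grandOde{e^{-\tau L_0}}$ by~\eqref{estimée décroissance exponentielle à tout ordre pour eta_c et v_c}, since the solitons separate at rates $c_{j+1}-c_j>0$. This yields $|\ga'-\gc|\lesssim\normX{\varepsilon}+e^{-\tau L_0}$. The estimate on $\gc'$ is quadratic, $|\gc'|\lesssim \normX{\varepsilon}^2+e^{-\tau L_0}$, because the linear term vanishes by the choice of orthogonality conditions. Together these give~\eqref{controle uniforme parametre modulation stab orbitale}.

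\textbf{Step 3: Lyapunov functional.} I introduce cutoffs $\chi_j$ equal to $1$ to the right of the midpoint $(a_{j-1}+a_j)/2$, moving at the intermediate speed $(c_{j-1}+c_j)/2$, and set
\[
\mathcal{G}(t)=E(Q)-\sum_j \gamma_j\,p_j(Q),\qquad p_j=\tfrac12\int\eta v\,(\chi_j-\chi_{j+1}),
\]
with suitable constants $\gamma_j$ (e.g.\ $\gamma_j=c_j^*$). The localized momenta satisfy a monotonicity formula $\frac{d}{dt}\int \eta v\chi_j\ge -Ce^{-\tau(L_0+t)}$, because the cutoff moves with a speed below $c_s$ in the region where $Q$ is close to $1$. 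Here I need the bound~\eqref{proposition: borne uniforme sur les quantités nu_c etc} to control the nonlinear flux terms. Then I expand
\[
\mathcal{G}(Q)=\sum_j\big(E-c_j p\big)(Q_{c_j})+\tfrac12\langle H\varepsilon,\varepsilon\rangle+\grandOde{\normX{\varepsilon}^3+|\gc-\gc^*|^2+e^{-\tau L_0}},
\]
where $H$ is the localized sum of the Hessians $E''(Q_{c_j})-c_jp''$. Each Hessian is coercive on the orthogonal complement, a fact coming from the single-soliton stability in~\cite{Bert2}. Gluing the pieces with a partition of unity keeps coercivity up to $\grandOde{1/L_0}$ errors.

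\textbf{Step 4: bootstrap, the main obstacle.} Coercivity gives $\normX{\varepsilon}^2\lesssim \mathcal{G}(t)-\mathcal{G}(0)+\normX{\varepsilon(0)}^2+\sum_j|c_j-c_j^*|^2$. The drift of the speeds is controlled through the conservation of $E$ and the almost-conservation of the $p_j$, using $\frac{d}{dc}(E-c^* p)(Q_c)=0$ at $c=c^*$ and $\frac{dp}{dc}\ne0$. This gives $|\gc-\gc^*|\lesssim \beta_0+e^{-\tau L_0}+\normX{\varepsilon}^2$, and closing the bootstrap yields~\eqref{borne uniforme dans stabilité orbitale}. The non-vanishing bound $\inf(1-\eta)\ge\iota_*$ follows from $H^1\subset L^\infty$ and Lemma~\ref{lemme: norm Lii de eta_c^* inférieur à delta}, and global existence then follows from Theorem~\ref{théoreme: LWP of NLShydro}. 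I expect the delicate point to be the monotonicity of the localized momentum in this nonzero-background setting. The quantity $\eta v$ is not sign-definite, so I would first try to prove a lower bound on the localized energy--momentum flux, using that its quadratic part is positive definite when the cutoff speed is below $c_s$.
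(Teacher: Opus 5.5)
The paper does not reprove this theorem; it is quoted from \cite{Bert2}. The ingredients it reuses in Section~\ref{section: Construction of an asymptotic N-soliton} (orthogonality to $\partial_xQ_{c_k}$ and $\nabla p(Q_{c_k})$, the functional $\mathcal{G}$ built on localized momenta, their monotonicity, and the coercivity of Corollary~1.13 in \cite{Bert2}) show that your Martel--Merle--Tsai scheme is the intended one.

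The first point to correct is the orthogonality conditions. Of your two options only the second works, and you should make it explicit. If $\varepsilon\perp\phi_j$, the part of $\frac{d}{dt}\langle\varepsilon,\phi_j\rangle$ that is linear in $\varepsilon$ is, after combining with the transport term, $-\langle\varepsilon,H_{c_j}J\phi_j\rangle$ with $H_c=E''(Q_c)-c\,p''(Q_c)$. This vanishes only if $J\phi_j\in\ker H_{c_j}=\mathrm{span}(\partial_xQ_{c_j})$, that is, $\phi_j\propto\nabla p(Q_{c_j})=\frac12SQ_{c_j}$. Choosing $\phi_j=\partial_cQ_{c_j}$ causes three failures:
\begin{itemize}
\item you only get $|\gc'|\lesssim\normX{\varepsilon}+e^{-\tau L_0}$, not the quadratic bound in~\eqref{controle uniforme parametre modulation stab orbitale};
\item the coercivity of $H_c$ that you import holds on $\{\partial_xQ_c,\nabla p(Q_c)\}^\perp$ (this is where $\frac{d}{dc}p(Q_c)<0$ enters), not on $\{\partial_xQ_c,\partial_cQ_c\}^\perp$;
\item the linear terms $(c_j-c_j^*)\langle\nabla p(Q_{c_j}),\varepsilon\rangle$ in the expansions of $\mathcal{G}$ and of the localized momenta vanish only when $\varepsilon\perp\nabla p(Q_{c_j})$.
\end{itemize}
Note also that for the pair $\partial_xQ_c,\partial_cQ_c$ the implicit function theorem is automatically nondegenerate. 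The condition $\frac{d}{dc}p(Q_c)\neq0$ you invoke is what is needed for the pair $\partial_xQ_c,\nabla p(Q_c)$.

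The genuine gap is in Step~4. The localized momenta are not almost conserved. For $\widetilde p_j=\frac12\int\eta v\chi_j$ the monotonicity formula only gives the one-sided bound $\widetilde p_j(t)\geq\widetilde p_j(0)-Ce^{-\tau L_0}$. The increase is a time-integrated flux that $\normX{\varepsilon(t)}$ does not control. Integrating $|\gc'|\lesssim\normX{\varepsilon}^2+\dots$ does not help either, because the bound grows linearly in $t$. Nor does $\mathcal{G}$ control the speeds, since $c\mapsto(E-c^*p)(Q_c)$ is concave at $c^*$.

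The missing idea is how the strict ordering $c_1^*<\dots<c_N^*$ upgrades one-sided information to two-sided information:
\begin{itemize}
\item Set $D_j=\widetilde p_j(t)-\widetilde p_j(0)$. Then $D_1=0$ by conservation of $p$, and $D_j\geq-Ce^{-\tau L_0}$ by monotonicity.
\item Conservation of $E$ and Abel summation give $\mathcal{G}(0)-\mathcal{G}(t)=\sum_{j\geq2}(c_j^*-c_{j-1}^*)D_j$.
\item The expansion and coercivity of $\mathcal{G}$ give $\mathcal{G}(0)-\mathcal{G}(t)\lesssim\beta_0^2+|\gc(t)-\gc^*|^2+e^{-\tau L_0}$.
\item All weights $c_j^*-c_{j-1}^*$ are positive, so each $D_j$ is bounded above by that same quantity.
\item Expand $\widetilde p_j(t)=\sum_{k\geq j}p(Q_{c_k(t)})+\grandOde{\normX{\varepsilon(t)}^2+e^{-\tau L_0}}$; the linear term is killed by $\varepsilon\perp\nabla p(Q_{c_k})$.
\item Take differences in $j$ and use $\frac{d}{dc}p(Q_c)\neq0$ to obtain $|\gc(t)-\gc(0)|\lesssim\normX{\varepsilon(t)}^2+\beta_0^2+|\gc(t)-\gc^*|^2+e^{-\tau L_0}$.
\end{itemize}
This last estimate is exactly what your bootstrap needs. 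This combination is where the assumption $\gc^*\in\adm_N$ is really used, and it is absent from your sketch.
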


\begin{rem}
    Since every norm on a finite dimensional Banach space are topologically equivalent, we use, here as in the sequel, one unique notation to designate every norm on $\R^N$ or $M_N(\R)\sim\R^{N^2}$. In fact, we write $\normR{\mathfrak{x}}$ for any $\mathfrak{x}:=(x_1,...,x_N)\in\R^N$. 
\end{rem}

We now elaborate on how orbital stabil:ity implies the asymptotic stability. We shall provide the missing proofs in the sequel. Assume that initially $\normX{Q_0-R_{\gc^*,\ga^0}^{hy}}= \beta_0\leq \beta_*$ with $\ga^0\in\pos_N(L_0)$ for some $L_0\geq L_*$ and $0<\beta_0\leq \beta_*$, with $\beta_*\leq\alpha_*$ and $L_*\geq l_*$ to be fixed later. Recall that by definition $\varepsilon(t)=\big(\varepsilon_\eta(t),\varepsilon_v(t)\big)=Q(t)-R^{hy}_{\gc(t),\ga(t)}$, then for $j\in\{1,...,N\}$,
    \begin{equation*}
        \varepsilon\big(t,.+a_j(t)\big)=Q\big(t,.+a_j(t)\big)-Q_{c_j(t)} - \sum_{k\neq j}Q_{c_k(t)}\big(. + a_j(t)-a_k(t)\big).
    \end{equation*}

Define $\xi_j(t)=Q(t)-Q_{c_j(t),a_j(t)}$.
By Theorem~\ref{théorème: stabilité orbitale} and the exponential decay property~\eqref{estimée décroissance exponentielle à tout ordre pour eta_c et v_c}, one can recover\footnote{We refer to Proposition~1.14 in~\cite{Bert2} for more details.} that for $k\neq j, |a_j(t)-a_k(t)|\longrightarrow +\ii$ as $t\rightarrow +\ii$, thus 
\begin{equation*}
    Q_{c_k(t)}\big(. + a_j(t)-a_k(t)\big)\ttendii 0\quad\text{in }\mathcal{X}_{hy}(\R).
\end{equation*}

Therefore, there exist a sequence of time $(t_n)$ tending to $+\ii$, a speeds $\gc^\ii=(c_1^\ii,...,c_N^\ii)\in\R^N$ and a limit profiles $\widetilde{\xi}_{j,0}\in\energysethydro$ such that for any $j\in\{1,...,N\}$, 
\begin{equation*}
    \xi_j\big(t_n,.+a_j(t_n)\big)\ntendfX \widetilde{\xi}_{j,0},
\end{equation*}
and
\begin{equation}\label{gc(t_n) tend vers c^ii}
    \gc(t_n)\ntend \gc^\ii.
\end{equation}

The stability of a chain then completely reduces to the case of one single travelling wave. We can in particular rely on~\cite{Bert3} and use a Liouville-type property, which yields $\widetilde{\xi}_{j,0}\equiv 0$, hence
\begin{equation*}
    \varepsilon\big(t_n,.+a_j(t_n)\big)\ntendfX 0.
\end{equation*}

For the part between the travelling waves, we consider a function $A_j$ focusing on the area between the travelling waves of speeds $c_{j-1}^\ii$ and $c_{j}^\ii$ associated with the convergence~\eqref{gc(t_n) tend vers c^ii}. Again, by~\eqref{borne uniforme dans stabilité orbitale}, there exists a sequence of time $(t_n)$ tending to $+\ii$ and a limit profile $\widetilde{\varepsilon}_{j,0}$ such that 
\begin{equation}\label{weak convergence varepsilon vers limit profile}
    \varepsilon\big(t_n,.+A_j(t_n)\big)\ntendfX \widetilde{\varepsilon}_{j,0}.
\end{equation}

Now we consider the solution $\widetilde{\varepsilon}_j=(\widetilde{\eta}_j,\widetilde{v}_j)$ associated with the initial data $\widetilde{\varepsilon}_j(0):=\widetilde{\varepsilon}_{j,0}$, which satisfies by~\eqref{borne uniforme dans stabilité orbitale} and~\eqref{weak convergence varepsilon vers limit profile}, 
\begin{equation}
    \normX{\widetilde{\varepsilon}_{j,0}}\leq \liminf_{n\rightarrow +\ii}\normX{\varepsilon(t_n)}\leq A_* K(\beta_0,L_0).
\end{equation}

Up to decreasing the values of $\alpha_*$ and up to increasing the values of $l_*$ and $A_*$ and stemming from the orbital stability of the trivial solution (see Proposition~\ref{prop: orbital stability of trivial solution hydro} below), that is $Q\equiv (0,0)$, we shall deduce the fact that $\widetilde{\varepsilon}_j$ is globally defined and for any $t\in\R$, \begin{equation*}
    \normX{\widetilde{\varepsilon_j}(t)}\leq A_*K(\beta_0,L_0).
\end{equation*}

Then, we rely on a Liouville-type property on solutions close to the trivial solution and algebraically uniformly decaying in time, implying $\widetilde{\varepsilon}_j(t)\equiv 0$.

The asymptotic stability result of a chain of well-prepared solitons in the hydrodynamical setting reads as follows.
\begin{thm}\label{théorème: stab asymp classical of a chain hydro}
     Under the same assumptions of Theorem~\ref{théorème: stab asymp classical of a chain}. Let $\gc^*\in \adm_N$. There exist $\beta_{*},L_* >0$ such that for any initial data $Q_0\in\energysethydro$ satisfying \begin{equation*}
        \normX{Q_0-\Rhy_{\gc^*,\ga^*}}\leq \beta_{*}\quad\text{with }\ga^*\in\pos_N(L_*),
    \end{equation*}
    then $Q(t)$ the solution associated with the initial condition $Q_0$ is globally defined and there exist $\gc^\ii\in (c_1,c_s)^N $ and two functions $(\gc,\ga)\in C^1(\R_+,\R^2)$ such that the following holds. Set
    \begin{equation}\label{décomposition orthogonale en hydro}
        \varepsilon(t)=Q(t)-\Rhy_{\gc(t),\ga(t)},
    \end{equation}
    then for any $j\in\{1,...,N\}$,
    \begin{equation}\label{convergence of varepsilon(t,a_j(t)) to zero}
    \varepsilon\big(t,.+a_j(t)\big)\tiitendfX 0,
    \end{equation}

    and for any $j\in\{1,...,N\}$,
    \begin{equation}\label{convergence of a_j'(t)) to c_j^ii}
        a_j'(t)\ttendii c^\ii_j\quad\quad\text{ and }\quad\quad c_j(t)\ttendii c^\ii_j.
    \end{equation}

Moreover for translation parameters $\gA=(A_1,...,A_{N+1})\in C^0(\R_+,\R^{N+1})$ satisfying for any $j\in\{1,...,N\}$ and $t\in\R$,
\begin{equation}\label{hypothèse sur A_j entre a_j et a_j+1}
    A_j(t)<a_j(t)<A_{j+1},
\end{equation}
and
\begin{equation}\label{hypothèse sur A_j'}
     \lim_{t\rightarrow +\ii}A'_j(t)=:\gamma_j\in (c_{j-1}^\ii,c_j^\ii),
\end{equation}

then there exists a sequence $(t_n)$ tending to $+\ii$ such that for any $j\in\{1,...,N+1\}$,
 \begin{equation}\label{varepsilon tend vers 0 A_j(t)}
        \varepsilon\big( t_n, .+A_j(t_n)\big)\ntendfX 0.
    \end{equation}

\end{thm}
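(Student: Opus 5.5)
We follow the scheme sketched just before the statement. First, take $\beta_*\le\alpha_*$ and $L_*\ge l_*$ and apply Theorem~\ref{théorème: stabilité orbitale}. This gives global existence, the modulation parameters $(\gc,\ga)\in C^1$, the decomposition~\eqref{décomposition orthogonale en hydro}, the uniform bound $\normX{\varepsilon(t)}+|\gc(t)-\gc^*|\le A_*K(\beta_0,L_0)$, the lower bounds~\eqref{proposition: borne uniforme sur les quantités nu_c etc}, and the control~\eqref{controle uniforme parametre modulation stab orbitale} on $\ga'-\gc$ and $\gc'$.

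\textbf{Step 1: the speeds converge.} We aim for $c_j(t)\to c_j^\ii$. The plan is to use localized energy and momentum monotonicity for a chain, built with cut-offs centred between consecutive solitons. Since $E$ and $p$ are conserved and the interaction terms are exponentially small in the distances $a_{j+1}(t)-a_j(t)$, which grow at least linearly by~\eqref{controle uniforme parametre modulation stab orbitale}, the quantities $\sum_{k\ge j}(E-c_j^*p)(Q_{c_k})$ are almost monotone. Combined with the strict monotonicity of $c\mapsto p(Q_c)$ on the admissible branch, this yields convergence of each $c_j(t)$, as in~\cite{Bert2}. An alternative route is to first extract limits along subsequences, as in~\eqref{gc(t_n) tend vers c^ii}, and then show that the limit does not depend on the subsequence via the Liouville property. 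Once the speeds converge, $a_j'(t)\to c_j^\ii$ follows from~\eqref{controle uniforme parametre modulation stab orbitale}, provided $\normX{\varepsilon(t)}\to0$ locally. That local convergence is exactly what Step 2 delivers through the modulation equations, whose right-hand side only involves $\varepsilon$ tested against exponentially localized functions.

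\textbf{Step 2: convergence near each soliton.} We argue by contradiction. Suppose~\eqref{convergence of varepsilon(t,a_j(t)) to zero} fails. Then there are a sequence $t_n\to\ii$ and a nonzero weak limit $\varepsilon(t_n,\cdot+a_j(t_n))\rightharpoonup\widetilde\varepsilon_{j,0}$, with $\gc(t_n)\to\gc^\ii$. The other solitons $Q_{c_k(t_n)}(\cdot+a_j(t_n)-a_k(t_n))$ tend to zero strongly in $\energysethydro$, so $Q(t_n,\cdot+a_j(t_n))\rightharpoonup Q_{c_j^\ii}+\widetilde\varepsilon_{j,0}$. Using Proposition~\ref{proposition: weak continuity of the flow in hydrodynamical setting}, where the uniform bound on $\eta$ comes from~\eqref{proposition: borne uniforme sur les quantités nu_c etc}, the weak limit propagates in time. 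The limit solution stays in an $A_*K(\beta_0,L_0)$-neighbourhood of the single soliton $Q_{c_j^\ii}$. Moreover, a monotonicity argument, with the exponential separation from the other solitons keeping the error terms summable, shows that it is uniformly exponentially localized around the modulated centre. The Liouville theorem for a single soliton from~\cite{Bert3} then forces $\widetilde\varepsilon_{j,0}=0$, a contradiction. Since every sequence admits such a subsequence, the full weak convergence holds.

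\textbf{Step 3: between the solitons.} For $A_j$ satisfying~\eqref{hypothèse sur A_j entre a_j et a_j+1}--\eqref{hypothèse sur A_j'}, we extract $t_n$ and a weak limit $\widetilde\varepsilon_{j,0}$ as in~\eqref{weak convergence varepsilon vers limit profile}. Since $\gamma_j$ lies strictly between the neighbouring speeds, the solitons sit at distance $\gtrsim t_n$ from $A_j(t_n)$. Hence $Q(t_n,\cdot+A_j(t_n))\rightharpoonup\widetilde\varepsilon_{j,0}$, which is a small perturbation of $(0,0)$. By orbital stability of the trivial solution (Proposition~\ref{prop: orbital stability of trivial solution hydro}), the corresponding solution $\widetilde\varepsilon_j$ is global and small for all $t\in\R$. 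The monotonicity formula of Subsection~\ref{section: Proof of the algebraic decay} is applied on the window moving at speed $\gamma_j$, using the gap between $\gamma_j$, the soliton speeds and $c_s$. This should yield uniform algebraic spatial decay of $\widetilde\varepsilon_j(t)$ around $\gamma_j t$. The Liouville property near the trivial state then gives $\widetilde\varepsilon_j\equiv0$, which is~\eqref{varepsilon tend vers 0 A_j(t)}. The cases $j=1$ and $j=N+1$ are treated with one-sided weights.

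\textbf{Main obstacle.} The hardest step should be the decay and monotonicity estimates in Step 3, together with the Liouville theorem near $(0,0)$. Without a soliton there is no coercivity from a spectral gap. Instead we must exploit that the window speed $\gamma_j$ stays away from the characteristic speeds, using the sound-speed structure of the dispersion relation~\eqref{dispersion relation}. We also have to control the fluxes coming from the adjacent solitons.
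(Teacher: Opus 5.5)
Your plan follows the same route as the paper. It uses orbital stability of the chain, then reduces the analysis near each soliton to the single-wave Liouville theorem of \cite{Bert3}. Between the solitons, it takes a weak limit $\widetilde{\varepsilon}_j$ that is global and small by Proposition~\ref{prop: orbital stability of trivial solution hydro}, gains algebraic decay around $\widetilde{A}_j(t)=\gamma_j t$ from the localized-momentum monotonicity on a subsonic window, and is then shown to vanish.

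The rigidity near $(0,0)$ that you leave open is proved in the paper with the virial $V(t)=\int_\R\big(x-\widetilde{A}_j(t)\big)\widetilde{\eta}_j\widetilde{v}_j\,dx$. Up to higher-order terms, its derivative is a positive combination of $\normLdeux{\partial_x\widetilde{\eta}_j}^2$, $\normLdeux{\widetilde{\eta}_j}^2$ and $\normLdeux{\widetilde{v}_j}^2$, minus $\widetilde{A}_j'\psLdeux{\widetilde{\eta}_j}{\widetilde{v}_j}$. It is therefore coercive exactly because $|\gamma_j|<c_s$, which is the mechanism you point to. The decay keeps $V$ bounded, so $\int^{+\infty}\normX{\widetilde{\varepsilon}_j(t)}^2\,dt<+\infty$. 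Orbital stability of $(0,0)$, run backward from times where $\normX{\widetilde{\varepsilon}_j}$ is small, then forces $\widetilde{\varepsilon}_j\equiv0$.
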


\begin{rem}
    The fact that the convergence~\eqref{varepsilon tend vers 0 A_j(t)} holds only along a subsequence of time is sufficient to prove the convergences in Theorem~\ref{théorème: stab asymp classical of a chain}. For the sake of convenience, we state the convergence along a subsequence of time in the previous result, and recover the stronger convergence along $t\rightarrow +\ii$ in Subsection~\ref{section: Asymptotic stability in the original framework}.
\end{rem}

As stipulated before Theorem~\ref{théorème: stab asymp classical of a chain hydro}, we shall only prove that a solution taken initially close to a sum of travelling wave converges weakly to zero when it is along the functions $A_j(t)$, that is when it is moving between two consecutive solitons $Q_{c_{j-1}^\ii}$ and $Q_{c_j^\ii}$. Note that in this framework, the assumptions on $A_j$, namely~\eqref{hypothèse sur A_j entre a_j et a_j+1} and~\eqref{hypothèse sur A_j'} are a refined version of conditions~\eqref{B_k(t)<b_k(t)<B_k+1(t),} and~\eqref{c_k-1^ii < liminf_trightarrow +iidfracB_k(t)}. We first prove the asymptotic stability under the former assumptions, and we prove that the result remains true under the latter. The following subsections are dedicated to the proof of the fact that $\widetilde{\varepsilon}_j\equiv 0$ and that the convergence does not depend on the subsequence $(t_n)$.

\subsection{Rigidity property between the travelling waves}\label{section: Proof of the intermediate results}

First, we state the orbital stability of the trivial solution in the hydrodynamical variables. This result relies on the fact that the distance of a solution $Q$ to the trivial solution for the topology of the energy set is almost equal to the energy of $Q$. This result was established in the original setting (see Proposition 1.2 in~\cite{Gerard2}) and an analogue statement also holds in the hydrodynamical framework.

\begin{prop}\label{prop: orbital stability of trivial solution hydro}
There exists $K,\alpha>0$ such that if $Q_0=(\eta_0,v_0)\in\mathcal{NX}_{hy}(\R)$ satisfies $\normX{Q_0}\leq \alpha$, then the solution $Q(t)$ associated with $Q_0$ is globally defined and satisfies $\normX{Q(t)}\leq K\alpha$.
\end{prop}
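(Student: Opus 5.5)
The plan is the classical energy argument: near the constant state $(\eta,v)=(0,0)$, the energy $E$ is equivalent to $\normX{Q}^2$. Combining this with conservation of energy and the blow-up criterion of Theorem~\ref{théoreme: LWP of NLShydro} should give both global existence and the uniform bound.

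\textbf{Step 1: two-sided control of $E$.} I would show that there exist $\alpha_0\in(0,1)$ and constants $0<k_1\le k_2$ such that every $Q=(\eta,v)\in\Nenergysethydro$ with $\normLii{\eta}\le \frac12$ satisfies
\begin{equation*}
k_1\normX{Q}^2\le E(Q)\le k_2\normX{Q}^2 .
\end{equation*}
\begin{itemize}
\item The upper bound is immediate from~\eqref{expression de l'énergie en hydro}. Since $1-\eta\ge\frac12$, the first two terms are bounded by $\frac14\normLdeux{\eta'}^2+\frac34\normLdeux{v}^2$. For the potential term, $f\in C^1$ and $F(1)=F'(1)=0$ give $F(1-\eta)\lesssim\eta^2$ for $|\eta|\le\frac12$.
\item The lower bound uses $1-\eta\le \frac32$ and $1-\eta\ge\frac12$ in the first two terms. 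For the last term, hypothesis~\eqref{hypothèse de croissance sur F minorant intermediaire} gives $F(1-\eta)\ge \frac{c_s^2}{4}\eta^2$.
\end{itemize}

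\textbf{Step 2: from energy to a pointwise bound on $\eta$.} In one dimension, $\normLii{\eta}^2\le\normLdeux{\eta}\normLdeux{\eta'}\le \normX{Q}^2$. Hence whenever $\normX{Q}\le\frac12$, the hypothesis $\normLii{\eta}\le\frac12$ of Step~1 holds.

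\textbf{Step 3: continuity (bootstrap) argument.} Fix $K:=\sqrt{k_2/k_1}+1$ and choose $\alpha$ small enough that $K\alpha<\frac12$. Let $\normX{Q_0}\le\alpha$ and let $Q$ be the maximal solution from Theorem~\ref{théoreme: LWP of NLShydro}, with $l=0$. Define
\begin{equation*}
T^*:=\sup\{T<T_{\max}:\ \normX{Q(t)}\le K\alpha \text{ on }[0,T]\}.
\end{equation*}
\begin{itemize}
\item $T^*>0$ by continuity of $t\mapsto Q(t)$ in $\energysethydro$, since $\normX{Q_0}\le\alpha<K\alpha$.
\item On $[0,T^*)$, Step~2 gives $\normLii{\eta(t)}\le K\alpha<\frac12$. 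Step~1 and energy conservation then give
\begin{equation*}
\normX{Q(t)}^2\le k_1^{-1}E(Q(t))=k_1^{-1}E(Q_0)\le (k_2/k_1)\alpha^2 .
\end{equation*}
\item So $\normX{Q(t)}\le\sqrt{k_2/k_1}\,\alpha<K\alpha$ strictly. If $T^*<T_{\max}$, continuity would extend the bound past $T^*$, a contradiction; hence $T^*=T_{\max}$.
\item Finally, $\sup_x\eta(t,x)\le\frac12$ on $[0,T_{\max})$. The blow-up criterion $\lim_{t\to T_{\max}^-}\sup\eta=1$ then forces $T_{\max}=+\infty$.
\end{itemize}
Negative times are handled identically by time reversibility ($t\mapsto-t$, $v\mapsto -v$).

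\textbf{Main obstacle.} The only delicate point is the lower bound on the potential term in Step~1, which needs $F(1-\eta)\gtrsim\eta^2$ uniformly. Assumption~\eqref{hypothèse de croissance sur F minorant intermediaire} supplies this directly. Without it, one would use $F(1-\eta)=-\frac{f'(1)}{2}\eta^2+\grandOde{|\eta|^3}$ with $f'(1)<0$, and absorb the cubic term using $\normLii{\eta}$ small.
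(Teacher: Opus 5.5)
Your proposal is correct and follows essentially the same route as the paper. Both arguments compare $E(Q)$ with the $\mathcal{X}_{hy}$-norm once $\normLii{\eta}$ is small (via the one-dimensional Sobolev embedding), then combine conservation of energy, a continuation argument, and the blow-up criterion $\sup_\R\eta(t)\to 1$ of the hydrodynamical well-posedness theorem; your write-up is in fact slightly more careful than the paper's sketch, since it compares $E(Q)$ with $\normX{Q}^2$ rather than with $\normX{Q}$, obtaining $\normX{Q(t)}\le\sqrt{k_2/k_1}\,\alpha$, and it treats negative times explicitly through the symmetry $(t,v)\mapsto(-t,-v)$.
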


\begin{proof}
Let $Q(t)$ the solution associated with the initial data $Q_0$ and $T_{\max}$ the maximal time of existence given by Theorem~\ref{théorème: local global well-posedness of cauchy problem}. Taking $\alpha$ small enough, so that the $L^\ii$-norm of $\eta$ is small (by the one-dimensional Sobolev embedding), and in view of the expression of the energy~\eqref{expression de l'énergie en hydro}, we can show that there exists a constant $C$ such that $Q_0\in\Nenergysethydro,\frac{1}{C}E(Q_0)\leq \normX{Q_0}\leq C E(Q_0)<1$. By conservation of the energy in the local well-posedness result, we can implement a continuation argument and show that for any $t\in [0,T_{\max}),CE\big(Q(t)\big)<1$ and $\normLii{\eta(t)}\leq C E\big(Q(t)\big)\leq C\alpha<1$. Then by characterization of the maximal time of existence $T_{\max}$, $Q$ is globally defined and $CE\big(Q(t)\big)\leq C\alpha$ is valid for any time $t\in\R$.

\end{proof}

The following rigidity property is sufficient to prove that $\widetilde{\varepsilon}_j\equiv 0$.
\begin{prop}\label{prop: rigidity property entre les soliton, version algébrique}
There exist $\beta_*,L_*>0$ such that the solution $\widetilde{\varepsilon}_j=(\widetilde{\eta}_j,\widetilde{v}_j)$ to~\eqref{NLShydro}, corresponding to the initial data $\widetilde{\varepsilon}_{j,0}$ satisfying
\begin{equation}\label{borne sur normx eps}
    \normX{\widetilde{\varepsilon}_{j,0}}\leq A_* K(\beta_0,L_0).
\end{equation}
is globally defined.\\
Furthermore, there exist an integer $l_0\geq 2$, such that the following holds. If there exist $r > 5/2, T_{\min}>0,C>0$ and a function $\widetilde{A}_j$ satisfying~\eqref{hypothèse sur A_j entre a_j et a_j+1} and~\eqref{hypothèse sur A_j'} such that for any $(t,x)\in [T_{\min},+\ii)\times \R$ and any integer $l\in\{0,...,l_0\}$, we have
\begin{equation}\label{lemme: decaying property in rigidity}
    \left|\partial_x^{l+1}\widetilde{\eta}_j\big(t,x+\widetilde{A}_j(t)\big)\right|+\left|\partial_x^{l}\widetilde{\eta}_j\big(t,x+\widetilde{A}_j(t)\big)\right|+\left|\partial_x^{l}\widetilde{v}_j\big(t,x+\widetilde{A}_j(t)\big)\right|\leq \dfrac{C}{1+|x|^r},
\end{equation}
then for any $t\in\R$,\begin{equation*}
    \widetilde{\varepsilon}_j(t)\equiv 0.
\end{equation*}
\end{prop}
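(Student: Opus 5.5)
Global existence is immediate from Proposition~\ref{prop: orbital stability of trivial solution hydro}. Decreasing $\beta_*$ and increasing $L_*$, we make $A_*K(\beta_0,L_0)=A_*(\beta_0+e^{-\tau_*L_0})$ smaller than the threshold $\alpha$ there. Then \eqref{borne sur normx eps} shows that $\widetilde{\varepsilon}_j$ is global and satisfies $\normX{\widetilde{\varepsilon}_j(t)}\leq K A_*K(\beta_0,L_0)$ for all $t$. By Sobolev embedding, $\normLii{\widetilde{\eta}_j(t)}$ is then small uniformly in time, so the hydrodynamical energy is coercive, $E(\widetilde{\varepsilon}_j)\simeq\normX{\widetilde{\varepsilon}_j}^2$, near the trivial state.

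For the rigidity part, I would use a virial/monotonicity argument in the frame moving with $\widetilde{A}_j$. The key point is that $\gamma_j=\lim\widetilde{A}_j'(t)$ lies in $(c_{j-1}^\ii,c_j^\ii)\subset(0,c_s)$, so the frame speed is strictly subsonic. Set $\gamma(t)=\widetilde{A}_j'(t)$, and for a bounded increasing weight $\chi$ (say $\chi=\tanh$, or an arctan-type function) consider
\[ \mathcal{M}(t)=\int_\R \chi\big(x-\widetilde{A}_j(t)\big)\,\widetilde{\eta}_j\widetilde{v}_j\,dx, \]
a localized momentum. The decay \eqref{lemme: decaying property in rigidity} with $r>5/2$ makes $\mathcal{M}$ well defined and uniformly bounded, and it also allows the less singular variant $\int x\,\widetilde{\eta}_j\widetilde{v}_j$ in the moving frame, which is integrable since $|x|(1+|x|)^{-2r}$ is. Differentiating in time with \eqref{NLShydro} gives
\[ \mathcal{M}'(t)=-\int\chi'\big(\cdots\big)+\text{cubic terms}, \]
where, after integration by parts, the quadratic form is
\[ \frac{3}{4}(\partial_x\widetilde{\eta}_j)^2+\frac{c_s^2}{4}\widetilde{\eta}_j^2+\widetilde{v}_j^2-\gamma\,\widetilde{\eta}_j\widetilde{v}_j \]
(up to harmless $\chi'''$ terms). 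Since $\gamma<c_s$, the discriminant condition $\gamma^2<c_s^2$ makes this form positive definite on $\widetilde{\eta}_j,\widetilde{v}_j$. The cubic and $\chi'''$ terms are absorbed by the smallness of $\normLii{\widetilde{\eta}_j}$ and by choosing the scale of $\chi$ large. This gives
\[ \int_{T}^{\infty}\!\!\int_\R \chi'\,\big(\widetilde{\eta}_j^2+(\partial_x\widetilde{\eta}_j)^2+\widetilde{v}_j^2\big)\,dx\,dt<\infty. \]

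To conclude, I would combine this with the time-translation structure. Run the same estimate on $[T,t]$, and also backwards in time (the flow is reversible and the solution is global). Then the integrated local norm is finite on all of $\R$. With the uniform polynomial decay, the weighted mass lies in $L^1_t$, and since $\chi'>0$ everywhere, this forces the whole energy to vanish along a sequence of times. Energy conservation and coercivity then give $\widetilde{\varepsilon}_j\equiv0$. The higher derivatives $l\le l_0$ control the error terms produced when differentiating the nonlinear pressure $\frac{\partial_x^2\eta}{2(1-\eta)}$. An alternative to the last step, if the backward-in-time argument is not available, is to apply the virial estimate to $\partial_x$-derivatives and use a Kato-smoothing-type bound to reach a contradiction.

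The main obstacle is positivity of the virial quadratic form together with control of the far-field contributions. The weight must be bounded, otherwise the non-decaying-in-time terms $\gamma\int\chi'\eta v$ are not controlled, and it must also not destroy coercivity. I would first try the explicit computation with $\chi(x)=\tanh(x/\lambda)$ and $\lambda$ large, checking that the resulting $2\times2$ symbol is definite exactly under $\gamma^2<c_s^2$. The polynomial decay $r>5/2$ is used to justify the integrations by parts and the finiteness of the moments.
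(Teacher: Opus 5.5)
Your proposal is correct, but its key step differs from the paper's. The paper uses the \emph{unbounded} weight $\mu=x-\widetilde{A}_j(t)$, i.e. $V(t)=\int_\R \mu\,\widetilde{\eta}_j\widetilde{v}_j$. Since $\mu'\equiv1$ and $\mu'''\equiv0$, the quadratic part of $V'$ is your form integrated against $1$ instead of $\chi'$. Hence $V'\gtrsim\normX{\widetilde{\varepsilon}_j}^2$ once $\widetilde{A}_j'$ is subsonic, and boundedness of $V$ directly gives $\int_{T_{\min}}^{\ii}\normX{\widetilde{\varepsilon}_j(t)}^2\,dt<\ii$. In that route the algebraic decay is what makes $V$ finite and uniformly bounded. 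It is also needed for the cubic term $\int_\R\mu\,\partial_x\widetilde{v}_j\,\widetilde{v}_j\,\widetilde{\eta}_j$, which is controlled through $\normLii{\mu\partial_x\widetilde{\eta}_j}$ and an interpolation involving higher derivatives.

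With your bounded $\chi$, $\mathcal{M}$ is bounded by the energy alone, and the cubic terms are absorbed by $\normLii{\widetilde{\eta}_j}$ without weights. The price is that you only get $\int_T^{\ii}\int_\R\chi'(x-\widetilde{A}_j(t))\,e\,dx\,dt<\ii$. The decay hypothesis must then serve as uniform tightness:
\begin{itemize}
\item choose $t_n$ along which the localized quantity tends to $0$;
\item bound the energy on $\{|x-\widetilde{A}_j(t_n)|\le R\}$ by $(\min_{[-R,R]}\chi')^{-1}$ times that quantity;
\item bound the energy on the complement by $O(R^{1-2r})$, uniformly in $n$.
\end{itemize}
This gives $\normX{\widetilde{\varepsilon}_j(t_n)}\to0$. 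Conservation of $E$ and coercivity then yield $\widetilde{\varepsilon}_j\equiv0$ for all $t\in\R$. Your route uses only the case $l=0$ of the decay, with any $r>1/2$. The paper's route avoids the tightness step, at the cost of weighted estimates.

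Three points in your write-up should be fixed.
\begin{itemize}
\item Positivity of $\chi'$ alone does not force the full energy to vanish, since energy could leak to spatial infinity in the moving frame. It is exactly the tightness argument above that closes this, so state it explicitly.
\item Drop the backward-in-time step. The decay hypothesis and the subsonic bound on $\widetilde{A}_j'$ are only available on $[T_{\min},+\ii)$. The step is also unnecessary, because energy conservation already carries the conclusion to all times.
\item Your claim that the weight must be bounded is wrong. The linear weight works because $\widetilde{A}_j'\int\widetilde{\eta}_j\widetilde{v}_j$ is part of the coercive quadratic form, and $V$ stays bounded thanks to the algebraic decay.
\end{itemize}
A minor point: with the sign conventions of the hydrodynamical system, $\mathcal{M}'=+\int\chi'(\cdots)$. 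The sign is irrelevant for your integrability argument.
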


\begin{proof}[Proof of Proposition~\ref{prop: rigidity property entre les soliton, version algébrique}]

First of all, by the condition~\eqref{lemme: decaying property in rigidity}, $\widetilde{\varepsilon}_{j,0}\in \mathcal{NX}_{hy}^4(\R)$, hence by Theorem~\ref{théoreme: LWP of NLShydro}, we know that the maximal time of existence does not depend on the regularity of $f$ and then $\widetilde{\varepsilon}_j$ exists locally in $\mathcal{NX}_{hy}^4(\R)$. Moreover, up to taking $\alpha_*$ and $1/L_*$ small enough, by Proposition~\ref{prop: orbital stability of trivial solution hydro} and~\eqref{borne sur normx eps}, the solution exists globally and in particular by~\eqref{NLShydro}, we deduce that $\widetilde{\varepsilon}_j\in C^0\big(\R,\mathcal{NX}_{hy}^4(\R)\big)\cap C^1\big(\R,\mathcal{NX}_{hy}^2(\R)\big)$ and satisfies for any $t\in\R$, 
\begin{equation}\label{borne sur varepsilon AK}
    \normX{\widetilde{\varepsilon_j}(t)}\leq A_*K(\beta_0,L_0).
\end{equation}

By smoothness of the previous quantities, we can linearize~\eqref{NLShydro} around the trivial solution $(\eta,v)=(0,0)$. This leads to the equation
\begin{equation}\label{expression de partial_t varepsilon}
    \partial_t\widetilde{\varepsilon}_j = JL(\widetilde{\varepsilon}_j) + JN(\widetilde{\varepsilon}_j),
\end{equation}
with
\begin{equation}\label{expression de J}
    \begin{array}{l}
    JQ=-2S\partial_x Q \\
\end{array}
\quad\text{with}\quad S:=\begin{pmatrix}
        0 & 1\\
        1 & 0
    \end{pmatrix},
\end{equation}

\begin{equation*}
    L(\widetilde{\varepsilon}_j)=\begin{pmatrix}
        \dfrac{1}{4}\left(-\partial_x^2 \widetilde{\eta}_j + c_s^2\widetilde{\eta}_j\right)\\
        \widetilde{v}_j
    \end{pmatrix},
\end{equation*}
and
\begin{equation*}
    N(\widetilde{\varepsilon}_j)=\begin{pmatrix}
        N_1(\widetilde{\varepsilon}_j)\\
        N_2(\widetilde{\varepsilon}_j)
    \end{pmatrix}=\begin{pmatrix}
        \dfrac{\widetilde{\eta}_j^2}{2}\displaystyle\int_0^1(1-s)f''(1-s\widetilde{\eta}_j)ds -\widetilde{v}_j^2 - \dfrac{(\partial_x\widetilde{\eta}_j)^2}{4(1-\widetilde{\eta}_j)^2}+\dfrac{\widetilde{\eta}_j\partial_x^2\widetilde{\eta}_j}{2}\int_0^1\dfrac{ds}{(1-s\widetilde{\eta}_j)^2}\\
        \widetilde{\eta}_j\widetilde{v}_j
    \end{pmatrix}.
\end{equation*}

We conclude the argument by considering the following virial quantity 
\begin{equation*}
    V(t)=\int_\R \mu(x)\widetilde{\eta}_j(t,x)\widetilde{v}_j(t,x)dx,
\end{equation*}
where we have set $\mu(x)=x - \widetilde{A}_j(t)$, and which shall provide the suitable estimate to control the perturbation $\widetilde{\varepsilon}_j$. We shall use the previous linearization in a proper way, taking care of the regularity of each quantity and eventually derive the next lemma.

\begin{lem}\label{lemme: borne inférieure du viriel}
The virial function $V$ is well-defined and differentiable on $[T_{\min},+\ii)$. Up to increasing the value of $T_{\min}$, $V$ is uniformly bounded and there exist two constants $C,p>0$ such that for $t\geq T_{\min}$,
    \begin{align*}
    V'(t)\geq C\normX{\widetilde{\varepsilon}_j(t)}^2-\grandOde{K(\beta_0,L_0)^p\normX{\widetilde{\varepsilon}_j(t)}^2}.
\end{align*}
\end{lem}

If we admit Lemma~\ref{lemme: borne inférieure du viriel}, then the values of $\beta_*$ and $1/L_*$ can be reduced sufficiently so that for a potentially smaller constant that we keep denoting by $C>0$, we have
$V'(t)\geq \frac{C}{2}\normX{\widetilde{\varepsilon}_j(t)}^2$. We infer from the previous lemma that $V$ is increasing and is bounded on $[T_{\min},+\ii)$, thus there exists $l_+\in\R$ such that $V(t)\longrightarrow l_+$ as $t\rightarrow +\ii$ and then 
\begin{equation*}
    \int_{T_{\min}}^\ii\normX{\widetilde{\varepsilon}_j(t)}dt = l_+ - V(T_{\min}) < +\ii.
\end{equation*}

By finiteness, we can exhibit a sequence $(t_n)$ tending to $+\ii$ such that $\normX{\widetilde{\varepsilon}_j(t_n)}$ tends to $0$. Thus, using the orbital stability result conversely in time, there is no other choice than $\widetilde{\varepsilon}_j(t)\equiv 0$.
\end{proof}

It remains to prove the virial lemma.
\begin{proof}[Proof of Lemma~\ref{lemme: borne inférieure du viriel}]

We verify that $V$ is well-defined and differentiable. Using the bound~\eqref{lemme: decaying property in rigidity}, $V$ is well-defined. Now, writing $V(t)=\frac{1}{2}\psLdeuxLdeux{\mu S\widetilde{\varepsilon}_j}{\widetilde{\varepsilon}_j}$, we must check that each term in the right-hand side of the expression of $\partial_t\widetilde{\varepsilon}_j$, that is~\eqref{expression de partial_t varepsilon}, can be integrated against the function $\mu S \widetilde{\varepsilon}_j$. We have for instance
\begin{align*}
    \left|\psLdeuxLdeux{\mu S JL\widetilde{\varepsilon}_j}{\widetilde{\varepsilon}_j}\right| =\left|2\psLdeuxLdeux{ \partial_x L\widetilde{\varepsilon}_j}{\mu\widetilde{\varepsilon}_j}\right| \leq 2\norm{\widetilde{\varepsilon}_j}{\mathcal{X}_{hy}^2}\normLdeuxLdeux{\mu\widetilde{\varepsilon}_j}.
\end{align*}

Using~\eqref{lemme: decaying property in rigidity}, we infer that the previous term is bounded. As for the term involving $N(\widetilde{\varepsilon}_j)$, we bound uniformly the term involving the nonlinearity $f$ by a constant and we similarly deduce that
\begin{align*}
    \left|\psLdeuxLdeux{\mu S JN\widetilde{\varepsilon}_j}{\widetilde{\varepsilon}_j}\right| \lesssim\norm{\widetilde{\varepsilon}_j}{\mathcal{X}_{hy}^2}\norm{\widetilde{\varepsilon}_j}{\mathcal{X}_{hy}}\normLdeuxLdeux{\mu\widetilde{\varepsilon}_j}.
\end{align*}

As a consequence, we deduce that $V$ is differentiable, so that we can write
\begin{align*}
    V'(t) &= W_1(t) + W_2(t),
\end{align*}
with 
\begin{align*}
    W_1(t) = \psLdeuxLdeux{\mu S JL\widetilde{\varepsilon}_j}{\widetilde{\varepsilon}_j} -\widetilde{A}_j'(t)\psLdeux{\widetilde{\eta}_j}{\widetilde{v}_j}\quad\text{and}\quad W_2(t) = \psLdeuxLdeux{\mu S JN\widetilde{\varepsilon}_j}{\widetilde{\varepsilon}_j}.
\end{align*}

Proceeding some integrations by part, and by the standard Young inequality, we obtain
\begin{align*}
    W_1(t) &=-2\psLdeuxLdeux{\partial_x L\widetilde{\varepsilon}_j}{\widetilde{\varepsilon}_j}-\widetilde{A}_j'(t)\psLdeux{\widetilde{\eta}_j}{\widetilde{v}_j}\\
    &=\dfrac{1}{2}\int_\R\big(\partial_x\widetilde{\eta}_j(t)\big)^2 +  \int_\R \big(\widetilde{v}_j(t)\big)^2 +\dfrac{c_s^2}{2}\int_\R\big(\widetilde{\eta}_j(t)\big)^2 - \widetilde{A}_j'(t)  \int\widetilde{\eta}_j (t)\widetilde{v}_j(t)\\
    &\geq \dfrac{1}{2}\int_\R\big(\partial_x\widetilde{\eta}_j(t)\big)^2 +\dfrac{c_s^2-\big(\widetilde{A}_j'(t)\big)^2}{2}\int_\R\big(\widetilde{\eta}_j(t)\big)^2  + \dfrac{1}{2}\int\big(\widetilde{v}_j(t)\big)^2,
\end{align*}

Now, we use~\eqref{hypothèse sur A_j'} and infer that for any $j\in\{1,...,N\},\ \gamma_j< c_s$, so that for $T_{\min}$ larger, there exists a constant $C\in (0,\frac{1}{2})$, only depending on $\gc^\ii$, such that for $t\geq T_{\min},\ c_s^2-\big(\widetilde{A}_j'(t)\big)^2\geq C$. As a consequence, we deduce that for $t\geq T_{\min},\ W_1(t)\geq C\normX{\widetilde{\varepsilon}_j(t)}^2$.

On the other hand,
\begin{align*}
    W_2(t) &= 2\int_\R \mu\partial_x\widetilde{v}_j \widetilde{v}_j \widetilde{\eta}_j + 2\int_\R\widetilde{\eta}_j\widetilde{v}_j^2 + 2\int_\R N_1(\widetilde{\varepsilon}_j)\widetilde{\eta}_j + 2\int_\R \mu N_1(\widetilde{\varepsilon}_j)\partial_x\widetilde{\eta}_j .
\end{align*}

We have $\int_\R \mu\partial_x\widetilde{v}_j \widetilde{v}_j \widetilde{\eta}_j=\grandOde{\normLii{\mu\partial_x\widetilde{\eta}_j}\normLdeux{\widetilde{v}_j}^2}$ and $2\int_\R\widetilde{\eta}_j\widetilde{v}_j^2 = \grandOde{\normLii{\widetilde{\eta}_j}\normLdeux{\widetilde{v}_j}^2}$. Using the one-dimensional Sobolev inequality, and~\eqref{borne sur varepsilon AK}, we derive $2\int_\R\widetilde{\eta}_j\widetilde{v}_j^2=\grandOde{K(\beta_0,L_0)\normX{\widetilde{\varepsilon}_j}^2}$. Regarding $\normLii{\mu\partial_x\widetilde{\eta}_j}$, we obtain, using~\eqref{lemme: decaying property in rigidity}, that
\begin{align*}
    \normLii{\mu\partial_x\widetilde{\eta}_j}\leq \normLii{\mu\sqrt{|\partial_x\widetilde{\eta}_j|}}\sqrt{\normLii{\partial_x\widetilde{\eta}_j}}=\grandOde{\sqrt{\Vert\widetilde{\eta}_j\Vert_{H^2}}}.
\end{align*}

We can integrate by part and deduce $\Vert\widetilde{\eta}_j\Vert_{H^2}^2\leq \normX{\widetilde{\varepsilon}_j}\Vert\widetilde{\eta}_j\Vert_{H^3}$. Therefore, using~\eqref{lemme: decaying property in rigidity} and~\eqref{borne sur varepsilon AK}, we derive
\begin{equation*}
    \int_\R \mu\partial_x\widetilde{v}_j \widetilde{v}_j \widetilde{\eta}_j=\grandOde{\big(K(\beta_0,L_0)\big)^{\frac{1}{4}}\normX{\widetilde{\varepsilon}_j}^2}.
\end{equation*}

Next, we proceed to an integration by parts with the term involving a derivative to the second order in $\widetilde{\eta}_j$ in $N_1(\widetilde{\varepsilon}_j)$. Then, by potentially reducing the value of $K(\beta_0,L_0)$ in~\eqref{borne sur varepsilon AK}, so that the terms involving the nonlinearity $f$ are bounded, we deduce that $2\int_\R N_2(\widetilde{\varepsilon}_j)\widetilde{\eta}_j = \grandOde{K(\beta_0,L_0)\normX{\widetilde{\varepsilon}_j}^2}$. In addition, by another integration by part on the higher order term, we have $2\int_\R \mu N_1(\widetilde{\varepsilon}_j)\partial_x\widetilde{\eta}_j =\grandOde{\normLii{\mu\partial_x\widetilde{\eta}_j}\normX{\widetilde{\varepsilon}_j}^2}$, that can be dealt with as previously.

\end{proof}

\subsection{Proof of the algebraic decay}\label{section: Proof of the algebraic decay}

To derive the algebraic decay \eqref{lemme: decaying property in rigidity} on the limiting profile $\widetilde{\varepsilon}_j=(\widetilde{\eta}_j,\widetilde{v}_j)$, we shall first extract some estimate on $Q(t)$ from a monotonicity formula (see Proposition \ref{proposition: monotonie formula} below). Once we proved the monotonicity formula on $Q$ and obtained estimate of the form~\eqref{monotonie troisieme}, the fact that estimates of the type \eqref{lemme: decaying property in rigidity} hold for $\widetilde{\varepsilon}_j$ is reminiscent from the method in \cite{Bert3} and due to the weak continuity of the flow of~\eqref{NLShydro} (see Proposition~\ref{proposition: weak continuity of the flow in hydrodynamical setting} in~\cite{Bert3}).

First, we prove that a very generic solution to~\eqref{NLShydro}, taken in the vicinity of travelling waves, or of the constant solutions of modulus one, namely the trivial solutions, enjoys decaying properties, that are sufficient for fulfilling the assumption~\eqref{lemme: decaying property in rigidity} in the rigidity theorem.

We consider a cut-off version of the momentum. For general $Q=(\eta,v)\in\energysethydro$, we define
\begin{equation}\label{définition localized momentum}
    p_{R}(Q)=\dfrac{1}{2}\int_\R \eta v  \chi_{R},
\end{equation}
where $\chi_{R}(x)=\chi(x-R)$ and
\begin{equation}\label{définition fonction chi}
    \chi(x)=\dfrac{1}{2}\Big( 1 + \tanh\left(\frac{\tau}{2} x\right)\Big),
\end{equation}

for a constant $\tau>0$ to be fixed later. We write
\begin{equation}\label{définition gamma_*}
    \gamma_* = \min_{j\in\{1,...,N\}}\big(\gamma_j,c_j^\ii-\gamma_j\big),
\end{equation}

\begin{comment}

\begin{rem}\label{remarque: widetilede a_j ou a_j dans la preuve de la monotonie}

\textcolor{red}{DIRE AUSSI UN MOT sur le fait que les parametres sont $\widetilde{a}_j(t)$ dans la decomposition orthogonale, et qu'on peut le faire comme si c'était les $a_j(t)$, respectant ainsi les conditions~\ref{convergence of a_j'(t)) to c_j^ii} et \eqref{hypothèse sur A_j entre a_j et a_j+1}, peut etre besoin d'une proposition comme Proposition 3.2 dans Yakine}
\end{rem}
\end{comment}

where $\gamma_j$ is given in~\eqref{hypothèse sur A_j'}. To obtain a uniform algebraic control on the original solution $Q$, we rely on the monotonicity formula and then recover the desired estimate on $\widetilde{\varepsilon}_j$ in a second time.
\begin{prop}\label{proposition: monotonie formula}
There exist positive constants $\kappa_{\min},\tau_{\min},\sigma_{\min},T_{\min}$ only depending on $\gc^*$ such that for any $t\in [T_{\min},+\ii)$, any $j\in\{1,...,N+1\}$ and any $(R,\sigma)\in\R\times  [-\sigma_{\min},\sigma_{\min}]$,
\begin{equation}\label{monotonie principal}
    \dfrac{d}{dt}\Big[p_{R+\sigma t + A_j(t)}\big(Q(t)\big)\Big]\geq \kappa_{\min}\int_\R\Big( (\partial_x\eta(t))^2 + \eta(t)^2 + v(t)^2\Big)\chi_{R+\sigma t + A_j(t)}' + \grandOde{e^{-\tau_{\min} |R+\sigma t|}}.
\end{equation}

In particular, there exists a constant that we also denote by $\kappa_{\min}>0$ such that for $T_{\min}\leq t_1\leq t_2$ and $R'\in\R$, we have 
\begin{equation}\label{monotonie secondaire}
    p_{j,R' + A_j(t_2)}(t_2)\geq p_{j,R'+A_j(t_1)}(t_1) -\kappa_{\min} e^{-\tau_{\min}|R'|} .
\end{equation}

and 
\begin{equation}\label{monotonie troisieme}
    \int_{t}^{t+1}\int_\R \Big((\partial_x\eta)^2 + \eta^2 + v^2\Big)\big(s,x+A_j(s)\big)\chi'_R(x) dx ds = \grandOde{e^{-\tau_{\min} R}}.
\end{equation}
\end{prop}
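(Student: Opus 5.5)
We plan to differentiate the localized momentum along the moving frame $y(t):=R+\sigma t+A_j(t)$ and read off a coercive quadratic form plus error terms. Using \eqref{NLShydro} and integrating by parts, the derivative of $p_{y(t)}(Q(t))$ is
\begin{equation*}
\frac{d}{dt}p_{y(t)}(Q)=-\frac{y'(t)}{2}\int_\R \eta v\chi_{y}'+\int_\R\Big(v^2(1-\eta)+\frac{(\partial_x\eta)^2}{8(1-\eta)}\cdots+\mathcal{F}(\eta)\Big)\chi_{y}'+\int_\R(\text{terms with }\chi_y''')\eta\,\cdots,
\end{equation*}
where $\mathcal F(\eta)=\frac12\eta f(1-\eta)-\frac12F(1-\eta)\cdots$ has quadratic part $\frac{c_s^2}{4}\eta^2$ (this is the standard flux computation for the momentum; it is rigorous for $\mathcal{X}_{hy}^2$ data and extended by density and continuity of the flow, Theorem~\ref{théoreme: LWP of NLShydro}). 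The leading quadratic density is
\begin{equation*}
\frac{1}{2}\Big(\frac{(\partial_x\eta)^2}{4}\cdot\tfrac{3}{1}\cdots+v^2+\frac{c_s^2}{2}\eta^2\Big)-\frac{y'}{2}\eta v ,
\end{equation*}
and, since $y'(t)\to\gamma_j+\sigma$ with $|\gamma_j|<c_s$, choosing $\sigma_{\min}$ small and $T_{\min}$ large we get $c_s^2-y'(t)^2\geq \delta>0$, so that by Young's inequality this density dominates $\kappa(\,(\partial_x\eta)^2+\eta^2+v^2)\chi'_y$. The terms with $\chi_y'''$ are absorbed because $|\chi'''|\leq \tau^2\chi'$, so taking $\tau\leq\tau_{\min}$ small makes them a small fraction of the coercive part.

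The main obstacle is that $\eta$ and $v$ are not small everywhere: only $\varepsilon=Q-R^{hy}_{\gc(t),\ga(t)}$ is small (Theorem~\ref{théorème: stabilité orbitale}), so the cubic and higher terms (e.g. $\eta v^2$, $\eta(\partial_x\eta)^2$, $\eta^3\int f''$) cannot be bounded by $\normLii{\eta}$ times the quadratic form near the solitons. We split $\R$ into the region within distance $\frac{\gamma_*}{2}t+|R+\sigma t|/2$ of some soliton center $a_k(t)$ and its complement. On the complement, $|Q|\lesssim |\varepsilon|+e^{-a_d\iota_*\,\mathrm{dist}}$, so $\normLii{\eta}$ is bounded by $A_*K(\beta_0,L_0)$ plus exponentially small terms, and cubic terms are absorbed in the coercive part after shrinking $\beta_*$, $1/L_*$. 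Near each soliton, since $a_k'(t)\to c_k^\ii$ by \eqref{controle uniforme parametre modulation stab orbitale} and \eqref{hypothèse sur A_j'} gives $\gamma_j\in(c_{j-1}^\ii,c_j^\ii)$, the distance between $a_k(t)$ and $y(t)$ is at least $\gamma_*t/2+|R+\sigma t|/2$ (up to shrinking $\sigma_{\min}$ relative to $\gamma_*$). Hence $\chi'_y\lesssim e^{-\tau|a_k(t)-y(t)|}$ there, and all terms, bounded using the uniform $\mathcal X_{hy}$ bound and $L^\ii$ bounds \eqref{proposition: borne uniforme sur les quantités nu_c etc}, contribute $\grandOde{e^{-\tau_{\min}|R+\sigma t|}}$ after lowering $\tau_{\min}$. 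This gives \eqref{monotonie principal}.

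For \eqref{monotonie secondaire}, we integrate \eqref{monotonie principal} with $\sigma=0$ between $t_1$ and $t_2$. Since the error $e^{-\tau|R'|}$ is not integrable in time for $\sigma=0$, we instead apply \eqref{monotonie principal} with $R=R'\mp\sigma_{\min}t_1$ and $\sigma=\pm\sigma_{\min}$, which makes the error integrable and sums to $\lesssim e^{-\tau_{\min}|R'|}$. We then transfer the shift back using that $p_{y}$ changes little under an extra shift of size $\sigma_{\min}(t_2-t_1)$, since $\chi$ is monotone, the sign being chosen according to $R'$; this is the standard Martel--Merle device. Finally, for \eqref{monotonie troisieme}, we integrate \eqref{monotonie principal} on $[t,t+1]$ with $\sigma=0$ and $R$. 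The left side is the difference $p_{R+A_j(t+1)}-p_{R+A_j(t)}$, and we show it is $\grandOde{e^{-\tau_{\min}R}}$ by bounding $|p_{R+A_j(s)}(Q(s))|$: the parts near solitons ahead of $A_j$ are handled by \eqref{monotonie secondaire}, comparing with a large time and with the conserved total momentum, in the usual way.
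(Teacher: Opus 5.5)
Your overall scheme (flux identity for $p_y$ with $y(t)=R+\sigma t+A_j(t)$, coercivity from $|y'(t)|<c_s$, absorbing the $\chi'''$ terms for small $\tau$, and splitting $\R$ into a region where $\eta$ is small and a region near the solitons) is the same as the paper's. It breaks at the claim that near each soliton $|a_k(t)-y(t)|\geq\frac{\gamma_*}{2}t+\frac{|R+\sigma t|}{2}$.

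Since $y(t)-a_k(t)=\big(A_j(t)-a_k(t)\big)+(R+\sigma t)$, the triangle inequality only gives $|a_k(t)-y(t)|\geq|a_k(t)-A_j(t)|-|R+\sigma t|$. The first term has a fixed sign and grows linearly, but $R\in\R$ is arbitrary, so the two can cancel; shrinking $\sigma_{\min}$ does not help.

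This is not a technicality: for $j\geq 2$, \eqref{monotonie principal} cannot hold uniformly in $R$. Take $k<j$ and $R=a_k(t_0)-A_j(t_0)-\sigma t_0$, so the cut-off is centred on the $k$-th soliton at time $t_0$. For a travelling wave $c\eta_c=2(1-\eta_c)v_c$, so $\eta_cv_c>0$. This soliton then contributes to $\frac{d}{dt}p_y$ approximately $\frac{c_k-y'}{2}\int\eta_{c_k}v_{c_k}(\cdot-a_k)\chi'_y$. That is negative and of order $\tau$, since $y'\approx\gamma_j+\sigma>c_k^\infty$: its momentum is leaving $\{x>y\}$. The terms involving $\varepsilon$ and the other solitons are of lower order once $\beta_*$ is small. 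Yet the right-hand side of \eqref{monotonie principal} is at least $-Ce^{-\tau_{\min}|a_k(t_0)-A_j(t_0)|}$, which tends to $0$.

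So the error must be measured by the distance from $y(t)$ to the solitons, e.g. $\sum_k e^{-\tau|y(t)-a_k(t)|}$, or $R$ must be restricted. The paper's proof has the same defect: its bound $\chi'_{R+\sigma t+A_j(t)}\leq\tau e^{\tau R_0/2}e^{-\tau|R+\sigma t|}$ holds on $I_j(t)$, not on $I_j(t)^c$ where it is used.

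Your splitting does prove a restricted version. Let $\gamma_*$ be the minimal gap between $\gamma_j$ and $c_{j-1}^\infty,c_j^\infty$, and assume $|R+\sigma t|\leq\frac{\gamma_*}{4}t$. Then for $t$ large every soliton is at distance at least $\frac{\gamma_*}{2}t$ from $y(t)$. Also $\eta$ is small on $\{|x-y(t)|\leq\frac{\gamma_*}{4}t\}$, and the complement costs $\mathcal{O}(e^{-\tau\gamma_*t/4})$. This error is integrable in time already for $\sigma=0$, so no $\sigma$-device is needed. It is also all the limit-profile argument requires. For fixed $R$ and $t\geq 4|R|/\gamma_*$, the map $t\mapsto p_{R+A_j(t)}(Q(t))$ is almost increasing and bounded, hence convergent. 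Therefore $\int_t^{t+1}\int_\R\big((\partial_x\eta)^2+\eta^2+v^2\big)(s,x+A_j(s))\chi'_R(x)\,dx\,ds\to 0$ as $t\to+\infty$.

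Your derivations of \eqref{monotonie secondaire} and \eqref{monotonie troisieme} contain further gaps:
\begin{itemize}
\item $\eta v$ has no sign, so the monotonicity of $\chi$ does not make $p_y-p_{y+\sigma_{\min}(t_2-t_1)}$ small. The strip of width $\sigma_{\min}(t_2-t_1)$ may even contain a soliton.
\item A constant bound on $|p_{R+A_j(s)}|$ cannot produce the factor $e^{-\tau_{\min}R}$.
\end{itemize}
In fact \eqref{monotonie troisieme} is false for all $t$ as stated. At a time where $a_j(t)-A_j(t)=R$, the weight $\chi'_R$ sits on the $j$-th soliton and the left-hand side is of order $\tau$. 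It only holds asymptotically, for fixed $R$ and $t\to+\infty$.
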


\begin{rem}\label{remarque: relaxation des hypotheses pour monotonie}

A similar monotonicity formula has been proven in~\cite{Bert3} (see Proposition~1.25). However in the latter framework, the solution decomposes as a single travelling wave plus an extra perturbation, and the estimates hold for the momentum, while focusing \textit{around} the soliton. Now, instead of considering general functions $A_j$ located between the solitons, we can specifically restrict ourselves to $X_j(t) = \frac{a_{j+1}(t)+a_j(t)}{2}$ for $j\in\{1,...,N-1\}$. In this case, provided that the solution $Q$ is initially close to a well-prepared chain of solitons, then an analogous monotonicity formula still holds. Indeed, an adaptation of the proof of the previous monotonicity formula inspired from the one in~\cite{Bert2} leads us to 
\begin{equation*}
    \dfrac{d}{dt}\Big[p_{ X_j(t)}\big(Q(t)\big)\Big]\geq -\kappa_{\min} e^{-\tau_{\min}t}.
\end{equation*}
\end{rem}

Due to the weak continuity of the flow under~\eqref{NLShydro}, we obtain improved convergence properties for the flow of~\eqref{NLShydro}, as well as for the modulation parameters. 
\begin{lem}\label{lemme: continuité faible du flow le long de la suite t_n}
Let $t\in\R$ and $j\in\{1,...,N\}$ be fixed. There exists $\widetilde{A}_j$ satisfying~\eqref{hypothèse sur A_j entre a_j et a_j+1} and~\eqref{hypothèse sur A_j'} such that 

    \begin{equation*}
        Q\big(t_n+t,.+A_j(t_n)\big)\ntendfX\widetilde{\varepsilon}_j(t),
    \end{equation*}

    \begin{equation*}
        Q\big(t_n+t,.+A_j(t_n+t)\big)\ntendfX\widetilde{\varepsilon}_j\big(t,.+\widetilde{A}_j(t)\big),
    \end{equation*}
    and
    \begin{equation*}
        A_j(t_n+t)-A_j(t_n)\ntend \widetilde{A}_j(t)
    \end{equation*}
\end{lem}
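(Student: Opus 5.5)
We first extract a convergent sequence of shifts, then identify the limit of the shifted solution through the weak continuity of the flow (Proposition~\ref{proposition: weak continuity of the flow in hydrodynamical setting}). For fixed $t$, the function $s\mapsto A_j(t_n+s)-A_j(t_n)$ is continuous. By~\eqref{hypothèse sur A_j'}, $A_j'$ tends to $\gamma_j$, so this family is equicontinuous and locally uniformly bounded on compact time intervals, with $|A_j(t_n+s)-A_j(t_n)|\le (|\gamma_j|+1)|s|$ for $n$ large. Arzelà--Ascoli on $[-T,T]$ for each $T\in\N$, combined with a diagonal extraction, gives a subsequence of $(t_n)$, which we do not relabel, and a continuous function $\widetilde{A}_j$ such that $A_j(t_n+s)-A_j(t_n)\to\widetilde{A}_j(s)$ locally uniformly. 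This is the third claim. Moreover $\widetilde{A}_j(0)=0$ and $\widetilde{A}_j(s)-\widetilde{A}_j(s')=\lim_n\int_{t_n+s'}^{t_n+s}A_j'$, and since $A_j'(t_n+\cdot)\to\gamma_j$ uniformly on compacts, $\widetilde{A}_j(s)=\gamma_j s$. Hence $\widetilde{A}_j'\equiv\gamma_j\in(c_{j-1}^\ii,c_j^\ii)$, which gives~\eqref{hypothèse sur A_j'}. The ordering~\eqref{hypothèse sur A_j entre a_j et a_j+1} is only meaningful for the limiting object, where the neighbouring solitons have escaped to $\pm\ii$ in the translated frame, so it is satisfied in the limit.

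Next we identify the first limit. Set $Q_n(s):=Q(t_n+s,\cdot+A_j(t_n))$; it solves~\eqref{NLShydro} by translation invariance. By construction of $\widetilde{\varepsilon}_{j,0}$, we have $\varepsilon(t_n,\cdot+A_j(t_n))\rightharpoonup\widetilde{\varepsilon}_{j,0}$. In addition, $R^{hy}_{\gc(t_n),\ga(t_n)}(\cdot+A_j(t_n))\to0$ weakly, even locally strongly, because $a_k(t_n)-A_j(t_n)\to\pm\ii$ for every $k$: by~\eqref{controle uniforme parametre modulation stab orbitale}, $a_k'$ stays close to $c_k(t)$, whereas $A_j'\to\gamma_j$, which is strictly separated from every $c_k^\ii$. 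Therefore $Q_n(0)\rightharpoonup\widetilde{\varepsilon}_{j,0}$ in $\energysethydro$.

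To apply Proposition~\ref{proposition: weak continuity of the flow in hydrodynamical setting}, we need a uniform bound $|\eta_n|\le M<1$ on $[-T,T]$. This follows from~\eqref{borne uniforme dans stabilité orbitale}, Lemma~\ref{lemme: norm Lii de eta_c^* inférieur à delta} and the Sobolev embedding, in the form $\|\eta(t)\|_{L^\ii}\le M_1+CA_*K(\beta_0,L_0)<1$, or directly from~\eqref{proposition: borne uniforme sur les quantités nu_c etc}. The same proposition ensures that $\widetilde{\varepsilon}_j$ exists on $[-T,T]$, and in fact globally by Proposition~\ref{prop: orbital stability of trivial solution hydro}. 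It then yields $Q_n(t)\rightharpoonup\widetilde{\varepsilon}_j(t)$, which is the first claim. The negative times $t_n+t$ with $t<0$ are harmless since $t_n\to+\ii$.

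For the second claim, write $Q(t_n+t,\cdot+A_j(t_n+t))=Q_n(t)(\cdot+h_n)$ with $h_n:=A_j(t_n+t)-A_j(t_n)\to\widetilde{A}_j(t)$. Translation by a convergent sequence preserves weak convergence in $\energysethydro$. Indeed, testing against $\phi\in C_c^\ii$ gives $\langle Q_n(\cdot+h_n),\phi\rangle=\langle Q_n,\phi(\cdot-h_n)\rangle$, and $\phi(\cdot-h_n)\to\phi(\cdot-\widetilde{A}_j(t))$ strongly, while $Q_n$ is bounded. Density of $C_c^\ii$ then gives the convergence, and $\widetilde{\varepsilon}_j(t,\cdot+\widetilde{A}_j(t))$ is the limit.

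The main obstacle is the uniform non-vanishing bound required by the weak continuity proposition, together with justifying that the shifted soliton sum vanishes weakly. Both rest on the separation between $\gamma_j$ and the limiting speeds, so we expect the second to be the point requiring most care: using only~\eqref{gc(t_n) tend vers c^ii} and~\eqref{controle uniforme parametre modulation stab orbitale}, we have to show $|a_k(t_n+s)-A_j(t_n+s)|\to\ii$ uniformly for $s\in[-T,T]$.
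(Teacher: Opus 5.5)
Your route is the paper's. It also takes $\widetilde{A}_j(t)=\gamma_j t$ and gets both weak limits from the weak continuity of the hydrodynamical flow (Proposition~\ref{proposition: weak continuity of the flow in hydrodynamical setting}) applied to the translated solutions, deferring the details to \cite{BetGrSm2}; your write-up is more explicit. The Arzel\`a--Ascoli and diagonal extraction are unnecessary: the mean value theorem together with $A_j'\to\gamma_j$ already gives $A_j(t_n+t)-A_j(t_n)\to\gamma_j t$ along the whole sequence.

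The one step you leave open is $|a_k(t_n)-A_j(t_n)|\to\infty$ for every $k$, and it would fail as you sketch it. You need it to identify the weak limit of $Q(t_n,\cdot+A_j(t_n))$ with $\widetilde{\varepsilon}_{j,0}$. The bound \eqref{controle uniforme parametre modulation stab orbitale} only gives $|a_k'-c_k|\lesssim \normX{\varepsilon}+e^{-\tau_*L_0/2}=O(K(\beta_0,L_0))$, which does not tend to zero. Moreover, \eqref{gc(t_n) tend vers c^ii} only gives $c_k\to c_k^\infty$ along $(t_n)$. The gap $\min_k|\gamma_j-c_k^\infty|$ is positive but may be smaller than that error, so these two facts do not prevent $a_k-A_j$ from staying bounded. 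The missing ingredient is the full-time convergence \eqref{convergence of a_j'(t)) to c_j^ii}, namely $a_k'(t)\to c_k^\infty$ as $t\to+\infty$. The paper takes this from the single-soliton reduction in Subsection~\ref{section: From orbital to asymptotic stability} (via \cite{Bert3}) before this lemma. With it, $(a_k-A_j)'(t)\to c_k^\infty-\gamma_j\neq 0$ for every $k$, since the $c_k^\infty$ are strictly ordered and $\gamma_j\in(c_{j-1}^\infty,c_j^\infty)$. Hence $|a_k(t)-A_j(t)|$ grows linearly, and the uniform exponential decay of $(\eta_c,v_c)$ makes the shifted soliton sum tend to zero weakly. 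You only need this divergence at $s=0$, not uniformly on $[-T,T]$. The bound $|\eta_n|\le M<1$ on $[-T,T]$ follows from the orbital stability estimate, Lemma~\ref{lemme: norm Lii de eta_c^* inférieur à delta} and the mutual separation of the $a_k$, independently of $A_j$.
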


\begin{proof}[Proof of Lemma~\ref{lemme: continuité faible du flow le long de la suite t_n}]
The convergences above were already proven in Subsection A.1.1 in~\cite{BetGrSm2}, in the Gross-Pitaevskii case. Nonetheless, it does not depend on the nonlinearity considered but only on the weak-continuity of the hydrodynamic flow, i.e. Proposition~\ref{proposition: weak continuity of the flow in hydrodynamical setting}. It can just be adapted to this framework, by setting $\widetilde{A}_j(t):=\gamma_j t$, which also satisfies~\eqref{hypothèse sur A_j entre a_j et a_j+1} and~\eqref{hypothèse sur A_j'}. We refer to~\cite{BetGrSm2} for more details.

\end{proof}

A straightforward consequence of Proposition~\ref{proposition: monotonie formula} and Lemma~\ref{lemme: continuité faible du flow le long de la suite t_n} is that if we pass to the weak limit (and then integrate in time), we recover a similar estimate for the limit profile, that is
\begin{equation*}
    \int_{t}^{t+1}\int_\R \Big((\partial_x\widetilde{\eta}_j)^2 + \widetilde{\eta}_j^2 + \widetilde{v}_j^2\Big)\big(s,x+\widetilde{A}_j(s)\big)\chi'_R(x) dx ds = \grandOde{e^{-\tau_* R}}.
\end{equation*}

In view of the assumptions on the nonlinearity $f$, we can argue as in the proof of Proposition 1.27 in~\cite{Bert3}, so that we obtain the desired estimate
\begin{equation*}
    \left|\partial_x^{l+1}\widetilde{\eta}_j\big(t,x+\widetilde{A}_j(t)\big)\right|+\left|\partial_x^{l}\widetilde{\eta}_j\big(t,x+\widetilde{A}_j(t)\big)\right|+\left|\partial_x^{l}\widetilde{v}_j\big(t,x+\widetilde{A}_j(t)\big)\right|\leq \dfrac{C}{1+|x|^r}.
\end{equation*}

~~\\

It remains to prove the monotonicity formula.
\begin{proof}[Proof of Proposition~\ref{proposition: monotonie formula}]
The proof can be adapted from the proof of Proposition~1.25 in~\cite{Bert3}. As in the latter, we derive the expression
\begin{align*}
    \dfrac{d}{dt}\left(p_{R+\sigma t+A_j(t)}(t)\right) = &\int_\R \pi_{R+\sigma t+A_j(t)}(\eta(t),v(t)),
\end{align*}

where we set $\widetilde{F}(\rho)=\rho f(1-\rho) - F(1-\rho)$ and
\begin{align*}
    \pi_{R+\sigma t+A_j(t)}(\eta,v)&=\dfrac{1}{2}\chi'_{R+\sigma t+A_j(t)}\bigg((1-2\eta)v^2 + \widetilde{F}(\eta)+\dfrac{(3-2\eta)(\partial_x\eta)^2}{4(1-\eta)^2} -\big(A_j'(t)+\sigma\big) \eta v\bigg)\\
    &\quad +\dfrac{1}{4}\chi'''_{R+\sigma t+A_j(t)}\big(\eta+\ln(1-\eta)\big).
\end{align*}

Similarly to the method used in~\cite{Bert3}, we show using assumption~\eqref{hypothèse de croissance sur F minorant intermediaire}, that there exists a constant $C>0$, only depending on $\gc^*$, such that
\begin{equation}\label{controle sur la quantité pi_R}
    \dfrac{1}{C}\chi_R'\Big((\partial_x\eta)^2+  q(\eta,v)\Big)\leq \pi_R(\eta,v),
\end{equation}

where \begin{align*}
    q(\eta,v)&=(1-2\eta)v^2-\mu_*|\eta v| +\left(-\int_0^1 rf'(1-r\eta)dr -\tau^2 C_{\ln}\right)\eta^2 .
\end{align*}

In fact, the only difference with the proof in~\cite{Bert3} lies in the estimate of the coefficient associated with the cross term $\eta v$. From~\eqref{hypothèse sur A_j'}, we infer that
\begin{equation*}
    \big| A_j'(t)+\sigma|\leq \big|A_j'(t)-\gamma_j\big| + |\gamma_j| + |\sigma|\leq \big|A_j'(t)-\gamma_j\big| + c_j^\ii + \sigma_{\min}.
\end{equation*}

Using~\eqref{convergence of a_j'(t)) to c_j^ii}, we can enlarge the value of $T_{\min}>0$ and take $\sigma_{\min}>0$ small enough so that for $t\geq T_{\min}$, \begin{equation*}
    \mu_*:=\sup_{t\geq T_{\min}}\big\{\big|A_j'(t)-l_j\big| + c_j^\ii+ \sigma_{\min}\big\} <c_s.
\end{equation*}

We now separate the real line into two disjoint parts that are an interval $I_j(t)$ and its complement. The interval $I_j(t)$, centered in $A_j(t)$, focuses away from the solitons, where the solution $\varepsilon$ is small and the quadratic form $q$ is positive. On the other hand, $I_j(t)^c$ takes into account the whereabout of the solitons but it will be controlled by using the exponential decay of the cut-off function $\chi$. We set 
\begin{equation*}
    I_j(t):=\left[A_j(t)-\dfrac{R_0}{2},A_j(t) + \dfrac{R_0}{2}\right],
\end{equation*}

where $R_0>0$ is to be fixed later in the following claim.

\begin{claim}\label{coercivité dans la forme quad dans la monotonie du moment}
There exists $\tau_{\min}>0$ such that for any $\tau\leq\tau_{\min}$ and possibly shrinking the value of $\beta_*$ and $1/L_*$, and up to increasing the values of $R_0,T_{\min},\kappa_{\min}$ such that for $(t,\sigma)\in [T_{\min},+\ii)\times [-\sigma_{\min},\sigma_{\min}]$, we have on $I_j(t)$:
    \begin{equation*}
        q(\eta,v)\geq \kappa_{\min} \big(\eta^2+v^2\big).
    \end{equation*}
\end{claim}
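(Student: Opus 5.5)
The claim reduces to showing that, on $I_j(t)$, the coefficients of $q$ define a uniformly positive definite quadratic form in $(\eta,v)$. The key inputs are two facts: $\eta$ is uniformly small on $I_j(t)$, and $\mu_*<c_s$. I would treat $q$ as a $2\times 2$ quadratic form $a v^2 - \mu_*|\eta v| + b\eta^2$ with variable coefficients
\begin{equation*}
a=1-2\eta,\qquad b=-\int_0^1 rf'(1-r\eta)\,dr-\tau^2C_{\ln},
\end{equation*}
and use the elementary bound $\mu_*|\eta v|\le \frac{\mu_*}{2}(\lambda v^2+\lambda^{-1}\eta^2)$. Positivity with a margin then follows once $4ab-\mu_*^2\ge \delta>0$ and $a\ge\delta$, since the smallest eigenvalue of the symmetric matrix with entries $a$, $b$ and off-diagonal $-\mu_*/2$ is bounded below in terms of $\delta$ and an upper bound on $a+b$.

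The first step is a uniform smallness of $\eta$ on $I_j(t)$. We write $\eta=\eta_{\gc(t),\ga(t)}+\varepsilon_\eta$. The bound~\eqref{borne uniforme dans stabilité orbitale}, together with the Sobolev embedding $H^1\subset L^\ii$, gives $\normLii{\varepsilon_\eta(t)}\lesssim \beta_0+e^{-\tau_*L_0}\le \beta_*+e^{-\tau_*L_*}$. For the soliton part, the interval $I_j(t)$ has fixed length $R_0$ and is centred at $A_j(t)$. By~\eqref{hypothèse sur A_j'} and~\eqref{convergence of a_j'(t)) to c_j^ii}, $A_j(t)-a_k(t)\sim(\gamma_j-c_k^\ii)t$, so for $T_{\min}$ large enough $\mathrm{dist}(I_j(t),\{a_k(t)\})\ge \gamma_* t/2 - R_0\ge \gamma_*t/4$. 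The exponential decay~\eqref{estimée décroissance exponentielle à tout ordre pour eta_c et v_c}, combined with the uniform lower bound $\sqrt{c_s^2-c_k(t)^2}\ge\iota_*$ from~\eqref{proposition: borne uniforme sur les quantités nu_c etc}, then gives $\sup_{I_j(t)}|\eta_{\gc(t),\ga(t)}|\lesssim e^{-a_d\iota_*\gamma_* t/4}$. Hence, after shrinking $\beta_*$ and $1/L_*$ and enlarging $T_{\min}$, we obtain $\sup_{I_j(t)}|\eta|\le \epsilon_0$ for any prescribed $\epsilon_0>0$. (The role of $R_0$ here is harmless; it is fixed and absorbed by $T_{\min}$.)

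The second step handles the coefficients. Since $f\in C^3$, we have $-\int_0^1 rf'(1-r\eta)\,dr = -\frac{f'(1)}{2}+\grandOde{|\eta|}=\frac{c_s^2}{4}+\grandOde{\epsilon_0}$, and also $a=1+\grandOde{\epsilon_0}$. Therefore
\begin{equation*}
4ab-\mu_*^2 = c_s^2-\mu_*^2 - 4\tau^2C_{\ln}+\grandOde{\epsilon_0}.
\end{equation*}
Set $\delta_0:=c_s^2-\mu_*^2>0$, which is guaranteed by the choice of $T_{\min},\sigma_{\min}$ made just before the claim. Now choose $\tau_{\min}$ with $4\tau_{\min}^2C_{\ln}\le\delta_0/4$, and $\epsilon_0$ small enough that the $\grandOde{\epsilon_0}$ term is at most $\delta_0/4$. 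This gives $4ab-\mu_*^2\ge\delta_0/2$, $a\ge 1/2$, and $b\le c_s^2$. The eigenvalue bound then yields $q\ge\kappa_{\min}(\eta^2+v^2)$ with $\kappa_{\min}$ depending only on $\delta_0$ and $c_s$, hence only on $\gc^*$ through $\gc^\ii$ and the $\gamma_j$.

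The main obstacle is the first step: it needs the quantitative separation of $A_j(t)$ from every soliton centre $a_k(t)$, uniformly in $t\ge T_{\min}$. Here I rely on~\eqref{convergence of a_j'(t)) to c_j^ii} and on $\gamma_j\in(c_{j-1}^\ii,c_j^\ii)$. If one wants to avoid using the a priori convergence of $a_j'$, one can replace it with the modulation estimate~\eqref{controle uniforme parametre modulation stab orbitale}, which gives $|a_k'(t)-c_k(t)|\lesssim\beta_0+e^{-\tau_*L_0/2}$, together with $c_k(t)$ staying close to $c_k^\ii$. Either way, a linear-in-$t$ gap results, and the rest is routine.
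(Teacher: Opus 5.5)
Your proposal is correct and follows essentially the same route as the paper. Both arguments get smallness of $\eta$ on $I_j(t)$ from three ingredients: the exponential decay of the solitons, the linear-in-time separation of $A_j(t)$ from every $a_k(t)$, and the orbital-stability bound on $\varepsilon_\eta$. Both then get positivity of the quadratic form from $\mu_*<c_s$ and small $\tau$; the paper completes the square explicitly, while you use the discriminant, which is equivalent. Your bookkeeping ($\delta_1(0)=c_s^2/4$, margin proportional to $c_s^2-\mu_*^2$) is the consistent one: the paper's choice $\widetilde{\delta}_{1,1}=\frac{\mu_*^2+c_s^2}{4}$ has to be halved for its completion of squares to go through.
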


We split the integral as announced previously, and up to enlarging the value of $C$ we obtain
\begin{align*}
    \dfrac{d}{dt}\big(\widetilde{p}_{R+\sigma t+A_j(t)}(t)\big)&\geq\dfrac{1}{C}\int_{I_j(t)}\Big((\partial_x\eta)^2+\eta^2 + v^2\Big)\big(t,.+A_j(t)\big)\chi'_{R+\sigma t}\\
    &\quad\quad\quad+ \int_{I_j(t)^c}\pi_{R+\sigma t+A_j(t)}(\eta(t),v(t))\\
    &\geq \dfrac{1}{C}\int_{\R}\Big((\partial_x\eta)^2+\eta^2 + v^2\Big)\big(t,.+A_j(t)\big)\chi'_{R+\sigma t} \\
    &\quad\quad\quad+ \int_{I_j(t)^c}\left(\pi_{R+\sigma t+A_j(t)}(\eta(t),v(t)) -\dfrac{\big((\partial_x\eta)^2+\eta^2 + v^2\big)\chi'_{R+\sigma t+A_j(t)}}{C}\right) 
\end{align*}

Furthermore, using~\eqref{hypothèse de croissance sur F minorant intermediaire} and~\eqref{proposition: borne uniforme sur les quantités nu_c etc}, we have the useful bounds
\begin{equation}\label{controle des termes du moment par l'énergie}
    (\partial_x\eta)^2\leq \dfrac{2(\partial_x\eta)^2}{(1-\eta)},\quad \eta^2\leq \dfrac{4}{c_s^2}\dfrac{F(1-\eta)}{2}\quad\text{and}\quad v^2\leq \dfrac{(1-\eta)v^2}{\iota_*},
\end{equation}

hence, taking $\kappa_{\min}$ even larger, we can assume that on $\R$,
\begin{equation*}
    \big|\pi_{R + \sigma t+A_j(t)}(\eta,v) - \dfrac{1}{C}\big((\partial_x\eta)^2+\eta^2 + v^2\big)\chi'_{R+\sigma t+A_j(t)}\big|\leq \kappa_{\min}e(\eta,v)\big|\chi'_{R+\sigma t+A_j(t)}\big|,
\end{equation*}

where $e(\eta,v)$ is the energy density defined in~\eqref{expression de l'énergie en hydro}. Since we have for any $x\in I_j(t)^c$,
\begin{equation*}
    \chi'_{R+\sigma t+A_j(t)}(x)\leq \tau_{\min} e^{-\tau_{\min}|x-(R+\sigma t+A_j(t))|}\leq \tau_{\min} e^{\frac{\tau_{\min} R_0}{2}}e^{-\tau_{\min}|R+\sigma t|}=\grandOde{e^{-\tau_{\min}|R+\sigma t|}},
\end{equation*}

as well as $\chi'''_{R+\sigma t+A_j(t)}(x)\leq \grandOde{e^{-\tau_{\min}|R+\sigma t|}}$, we obtain the control
\begin{align*}
    \dfrac{d}{dt}\big(\widetilde{p}_{R+\sigma t+A_j(t)}(t)\big)\geq -\kappa_{\min} E(\eta,v)e^{-\tau_{\min}|R+\sigma t|} + \kappa_{\min}\int_\R \Big((\partial_x\eta)^2+\eta^2 + v^2\Big)\big(t,.+A_j(t)\big)\chi'_{R+\sigma t},
\end{align*}
which concludes the proof of the main estimate by conservation of the energy.

To obtain the almost monotonicity formula for times $t_1\leq t_2$, we just apply the previous estimate. For $R\geq 0$, we first integrate between $t_1$ and $(t_1+t_2)/2$ with a special choice of parameters $\sigma =\sigma_{\min}$ and $R=R'-\sigma_{\min} t_2$. Then, we integrate between $(t_1+t_2)/2$ and $t_2$ with $\sigma =-\sigma_{\min}$ and $R=R'+\sigma_{\min} t_2$. The case $R\leq 0$ can be obtained similarly.

\end{proof}

It remains to prove the claim.
\begin{proof}[Proof of Claim~\ref{coercivité dans la forme quad dans la monotonie du moment}]

We write \begin{align*}
    q(\eta,v)&=\delta_1(\eta) \eta^2 - \delta_2|\eta v| + \delta_3 (\eta)v^2,
\end{align*}
with 
\begin{align*}
    & \delta_1(\eta) = -\int_0^1 rf'(1-r\eta)dr -\tau^2 C_{\ln},\\
    &\delta_2=\mu_* ,\\
    &\delta_3(\eta)=1-2\eta,
\end{align*}

and obtain the expression
\begin{equation*}
    q(\eta,v)=\widetilde{\delta}_{1,1}\left(|\eta|-\dfrac{\delta_2}{2\widetilde{\delta}_{1,1}}|v|\right)^2 + \widetilde{\delta}_{1,2}(\eta)\eta^2 + \left(\delta_3(\eta)-\dfrac{\delta_2^2}{4\widetilde{\delta}_{1,1}}\right)v^2,
\end{equation*}
where we have set $\widetilde{\delta}_{1,1}=\frac{\mu_*^2+c_s^2}{4}$, and $\widetilde{\delta}_{1,2}(\eta):=\delta_1(\eta)-\widetilde{\delta}_{1,1}$. We recall that $\mu_*<c_s$, so that $\widetilde{\delta}_{1,1}<\frac{c_s^2}{2}$. In addition, we set $\gd:=\frac{c_s^2}{2(\mu_*^2+c_s^2)}$. As a consequence of the preceding choices, by the orthogonal decomposition of $\varepsilon$ in~\eqref{décomposition orthogonale en hydro}, then the exponential decay~\eqref{estimée décroissance exponentielle à tout ordre pour eta_c et v_c} and~\eqref{proposition: borne uniforme sur les quantités nu_c etc}, we have for any $(t,x)\in\R\times I_j(t)$,
\begin{equation*}
    |\eta(t,x)|\leq \sum_{k=1}^N|\eta_{c_k(t),a_k(t)}(x)|+|\varepsilon_\eta(t,x)|\leq K_d \sum_{k=1}^Ne^{-a_d \iota_* |x-a_k(t)|}+\normHun{\varepsilon_\eta}.
\end{equation*}

Furthermore, for $x\in I_j(t)$, and any $k\in\{1,...,N\}$, we have
\begin{align*}
    |x-a_k(t)|&\geq |a_k(t)-A_j(t)| - |x-A_j(t)|\geq a_j(t)-A_j(t) - \dfrac{R_0}{2}\\
    &\geq \int_{T_{\min}}^t\big(a_j'(s)-A_j'(s)\big)ds  +a_j(T_{\min})-A_j(T_{\min})-\dfrac{R_0}{2},
\end{align*}

and
\begin{align*}
    \int_{T_{\min}}^{t}\big(a_j'(s)-A_j'(s)\big)ds &=\int_{T_{\min}}^{t}\big(a_j'(s)-c_j(s)\big)ds +\int_{T_{\min}}^{t}\big(c_j(s)-c_j^\ii\big)ds + (t-T_{\min})(c_j^\ii - \gamma_j)\\
    &\quad\quad+ \int_{T_{\min}}^{t}\big(\gamma_j-A_j'(s)\big)ds .
\end{align*}

We fix the value $T_{\min}$ large enough and and shrink sufficiently the values of $\beta_*$ and $1/L_*$, so that we have by~\eqref{borne uniforme dans stabilité orbitale},~\eqref{controle uniforme parametre modulation stab orbitale},~\eqref{hypothèse sur A_j'} and~\eqref{définition gamma_*},
\begin{equation*}
    \int_{T_{\min}}^{t}\big(a_j'(s)-A_j'(s)\big)ds \geq \dfrac{\gamma_*}{2}(t-T_{\min}).
\end{equation*}

Finally, we set $R_0 = a_j(T_{\min})-A_j(T_{\min})$, and we have shown that for any $(t,x)\in [T_{\min},+\ii)\times I_j(t)$,
\begin{equation*}
    |x - a_k(t)|\geq \dfrac{\gamma_*}{2}(t-T_{\min}) + \dfrac{R_0}{2}.
\end{equation*}

hence,
\begin{equation*}
    \normLii{\eta(t)}\leq K_d e^{-\iota_*\big(\frac{\gamma_*}{2}(t-T_{\min}) + \frac{R_0}{2}\big)}+A_*K(\beta_0,L_0)
\end{equation*}

Then we can assume that $|\eta| \leq \frac{\gd}{2}$ and since $-2f'(1-\xi)$ tends to $c_s^2$ as $\xi$ tends to $0$, there exists $\tau_1>0$ such that if $\tau\leq\tau_1 $, we have
\begin{equation*}
    \quad\widetilde{\delta}_{1,1}< \delta_1 (\eta)<\dfrac{c_s^2}{2}.
\end{equation*}

This implies that $\delta_3(\eta)-\frac{\delta_2^2}{4\widetilde{\delta}_{1,1}}\geq \gd$ and $\widetilde{\delta}_{1,2} \geq \frac{c_s^2}{8}$ which provides the suitable constant $\kappa_*$.
\end{proof}

\subsection{Asymptotic stability in the original framework}\label{section: Asymptotic stability in the original framework}

Using the notations of Subsection~\ref{section: From orbital to asymptotic stability}, we finish the proof of Theorem~\ref{théorème: stab asymp classical of a chain hydro}.

\begin{proof}[Proof of Theorem~\ref{théorème: stab asymp classical of a chain hydro}]
Regarding properties~\eqref{décomposition orthogonale en hydro},~\eqref{convergence of varepsilon(t,a_j(t)) to zero} and~\eqref{convergence of a_j'(t)) to c_j^ii}, we refer to the discussion in Subsection~\ref{section: From orbital to asymptotic stability}. As for the part between each soliton, Subsection~\ref{section: Proof of the algebraic decay} is dedicated to showing that the assumptions in the rigidity property, namely Proposition~\ref{prop: rigidity property entre les soliton, version algébrique}, are achieved. We take $\beta_*$ and $1/L_*$ small enough such that the results of the latter subsection are true, then Proposition~\ref{lemme: decaying property in rigidity} applies and the profile limit is identically equal to zero which concludes the proof of Theorem~\ref{théorème: stab asymp classical of a chain hydro} under assumptions~\eqref{hypothèse sur A_j entre a_j et a_j+1} and~\eqref{hypothèse sur A_j'}.
\end{proof}

Now, we proceed to the proof of Theorem~\ref{théorème: stab asymp classical of a chain} in several parts. First, we show that the convergence~\eqref{varepsilon tend vers 0 A_j(t)} in Theorem~\ref{théorème: stab asymp classical of a chain hydro} still holds under looser assumptions on the translation parameters $A_j$. In a second part, we prove that the convergences actually do not depend on the extraction and is true as $t\rightarrow +\ii$. Finally, we recover the asymptotic stability in the original framework.

\begin{proof}[Proof of Theorem~\ref{théorème: stab asymp classical of a chain}]

\underline{Step 1: Relaxation of the translation parameter $A_j$.} We assume that $A_j$ satisfies~\eqref{hypothèse sur A_j entre a_j et a_j+1} and
\begin{equation}\label{hypothèse sur A_j des liminf}
    c_{j-1}^\ii < \liminf_{t\rightarrow +\ii}\dfrac{A_j(t)}{t}\leq \limsup_{t\rightarrow +\ii}\dfrac{A_j(t)}{t}<c_j^\ii,
\end{equation}
instead. Therefore, using~\eqref{hypothèse sur A_j des liminf} and up to extracting a subsequence, we have
\begin{equation*}
    \lim_{n\rightarrow +\ii}\dfrac{A_j(t_n)}{t_n}=:\gamma_j^\ii\in (c_{j-1}^\ii,c_j^\ii).
\end{equation*}

Since we consider only the limit along the sequence $(t_n)$ and since $A_j$ satisfies~\eqref{hypothèse sur A_j entre a_j et a_j+1} and~\eqref{hypothèse sur A_j des liminf}, we shall construct for any $j$ a smooth function satisfying~\eqref{hypothèse sur A_j entre a_j et a_j+1} and tending to $\gamma_j^\ii$ as $t\rightarrow +\ii$. 

Since $(t_n)$ tends to $+\ii$, we can assume that for any integer $n$, we have $t_{n+1}\geq 2t_n $. It is sufficient to set a new function $\widetilde{A}_j$ defined in the following way. For any integer $n$, we prescribe the value of $\widetilde{A}_j$ in the vicinity of $t_n$, by imposing $\widetilde{A}_j(t)=A_j(t_n) + \dfrac{A_j(t_{n+1})-A_j(t_n)}{t_{n+1}-t_n}(t-t_n)$. Then we glue all parts of this function in a smooth way so that $\widetilde{A}_j\in C^1(\R_+)$. We verify that for any integer $n$, $\widetilde{A}_j(t_n)=A_j(t_n)$ and
\begin{align*}
    \widetilde{A}_j'(t_ n)=\dfrac{A_j(t_{n+1})-A_j(t_n)}{t_{n+1}-t_n} = \dfrac{A_j(t_{n+1})}{t_{n+1}}+ \left(\dfrac{A_j(t_{n+1})}{t_{n+1}}-\dfrac{A_j(t_{n})}{t_{n}}\right)\dfrac{t_n}{t_{n+1}-t_n}\ntend \gamma_j^\ii.
\end{align*}

Therefore, by~\eqref{hypothèse sur A_j des liminf}, the function $\widetilde{A}_j$ can be set to satisfy
\begin{equation*}
    \lim_{t\rightarrow +\ii}\widetilde{A}'_j(t)=\lim_{n\rightarrow +\ii}\dfrac{A_j(t_n)}{t_n}=\gamma_j^\ii.
\end{equation*}
as well as the assumptions~\eqref{hypothèse sur A_j entre a_j et a_j+1} and~\eqref{hypothèse sur A_j'}, thus the results of the previous sections apply. Namely, the limit profiles $\widetilde{\varepsilon}_j$ fulfills the assumptions in Proposition~\ref{lemme: decaying property in rigidity} and thus by~\eqref{weak convergence varepsilon vers limit profile}, 
\begin{equation}\label{weak convergence varepsilon vers limit profile en widetile A_j}
    \varepsilon\big(t_n,.+\widetilde{A}_j(t_n)\big)\ntendfX 0.
\end{equation}

Then by construction of $\widetilde{A}_j$,
\begin{equation}
\varepsilon\big(t_n,.+A_j(t_n)\big)\ntendfX 0.
\end{equation}

\underline{Step 2: The convergence does not depend on the subsequence.} 

We can show that for any another subsequence $(s_n)$ tending to $+\ii$, the sequence $\big(\varepsilon(s_n,.+A_j(s_n))\big)_n$ will always be bounded in the hydrodynamical energy set, by orbital stability. Assume that, by contradiction
\begin{equation}
\varepsilon\big(s_n,.+A_j(s_n)\big)\ntendfX \varepsilon_{j},
\end{equation}
with $\varepsilon_{j} \neq 0$. The arguments of Step 1 can be implemented along the sequence $(s_n)$ and then the profile would still be compliant with the previous properties. This profile is then necessarily identically equal to $0$. This shows that the convergence does not depend on the subsequence and concludes this step.

\underline{Step 3: Convergences in the original setting.}
By Lemma~\ref{lemme: équivalence des distances classique et hydro proche d'une chaine de soliton}, we can fix $\delta_*>0$ small enough so that if $d(\psi_0,R_{\gc^*,\ga^*,\Theta^*}^{or})\leq \delta_*$, then $\normX{Q_0-R_{\gc^*,\ga^*}^{hy}}\leq \beta_*$. Hence Theorem~\ref{théorème: stab asymp classical of a chain hydro} applies and we have the decomposition~\eqref{décomposition orthogonale en hydro} in the hydrodynamical variables. Furthermore, since $c_j(t)$ remains uniformly close to $c_j^\ii$, then by Remark~\ref{définition K_lip}, and convergences~\eqref{convergence of varepsilon(t,a_j(t)) to zero} and~\eqref{convergence of a_j'(t)) to c_j^ii}, we deduce
\begin{align*}
    Q\big(t,.+a_j(t)\big)\tiitendfX Q_{c_j^\ii}.
\end{align*}

Now, for any $j\in\{1,...,N\}$, we implement the same procedure that the one used in Subsection 1.3.4 in~\cite{Bert3}, and this provides a smooth phase modulation $\Theta=(\theta_1,...,\theta_N)$ and a refined translation parameter $b=(b_1,...,b_N)$ such that for any $j\in\{1,...,N\}$,
\begin{equation*}
        e^{-i\theta_j(t)} \psi\big( t, .+b_j(t)\big)\tiitendfd \gu_{c^\ii_j},
    \end{equation*}

and
    \begin{equation*}
        b_j'(t)\ttendii c^\ii_j\quad\quad\text{ and }\quad\quad \theta_j'(t)\ttendii 0,
    \end{equation*}

Finally, for any function $\mathfrak{B}$ satisfying~\eqref{B_k(t)<b_k(t)<B_k+1(t),} and~\eqref{c_k-1^ii < liminf_trightarrow +iidfracB_k(t)}, we can proceed similarly and exhibit a limit profile $\psi_{j,\ii}$ such that,
\begin{equation}
         e^{-i\theta_j(t)}\psi\big( t, .+B_j(t)\big)\tiitendf \psi_{j,\ii}\quad\text{in }H^1_{\loc}(\R).
    \end{equation}

Thus by~\eqref{varepsilon tend vers 0 A_j(t)}, $\eta\big(t,.+B_j(t)\big)$ converges weakly to zero in $H^1(\R)$ as $t$ tends to $+\ii$, and by the Rellich theorem, we deduce that we have the pointwise equality $|\psi_{j,\ii}|=1$. Thus there must exist a function $\varphi_{j,\ii}$ such that $\psi_{j,\ii}=e^{i\varphi_{j,\ii}}$. Then~\eqref{varepsilon tend vers 0 A_j(t)}, we also have that $v\big(t,.+B_j(t)\big)$ converges weakly to zero in $L^2(\R)$ as $t$ tends to $+\ii$. Moreover,
\begin{equation*}
    v\big(t,.+B_j(t)\big) = \dfrac{i\psi\big( t, .+B_j(t)\big).\partial_x \psi\big( t, .+B_j(t)\big)}{\big|\psi\big( t, .+B_j(t)\big)\big|}\tiitendf \dfrac{i\psi_{j,\ii}.\partial_x\psi_{j,\ii}}{|\psi_{j,\ii}|^2} = \varphi_{j,\ii}'\text{ in }L^2_{\loc}(\R).
\end{equation*}

This implies that $\varphi_{j,\ii}$ is equal to a constant that we denote by $e^{i\theta_j^\ii}$, which ends the proof.

\end{proof}

\section{Construction of an asymptotic $N$-soliton}\label{section: Construction of an asymptotic N-soliton}

In this section, we construct the asymptotic $N$-soliton in the hydrodynamical framework, and this will imply Theorem~\ref{théorème: existence d'un asymptotic N soliton like solution classique variable}. In particular, we rely on the orbital stability result stated in Subsection~\ref{section: From orbital to asymptotic stability}, as we shall see in Subsection~\ref{section: Proof of the uniform estimate}. First we state the existence of an hydrodynamical $N$-soliton, and then we prove Theorem~\ref{théorème: existence d'un asymptotic N soliton like solution classique variable} by admitting that the following result is true.

\begin{prop}\label{théorème: existence d'un asymptotic N soliton like solution}
    Let $\gc^*\in (c_{\min},c_s)^N$ and $\ga^*\in\R^N$. There exist a global solution $\mathcal{Q}^{(N)}(t,x)=\big(\eta^{(N)}(t,x),v^{(N)}(t,x)\big)\in C^0\big(\R,\mathcal{NX}_{hy}(\R)\big)$ to~\eqref{NLShydro}, and positive constants $T_0,\tau_0,K_0$ such that for any $t\geq T_0$,
\begin{equation*}
    \normX{\mathcal{Q}^{(N)}(t)-\sum_{k=1}^N Q_{c_k^*}(.-c_k^*t-a_k^*)}\leq K_0 e^{-\tau_0t}.
\end{equation*}

\end{prop}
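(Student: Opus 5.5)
We follow the Martel--Merle compactness scheme (\cite{Martel1,MartMer5}) in the hydrodynamical variables. Up to relabelling, order the speeds so that $c_1^*<\dots<c_N^*$. If two speeds coincide, the two waves never separate and the statement cannot be obtained this way, so we assume $\gc^*\in\adm_N$, as in the remark after Theorem~\ref{théorème: existence d'un asymptotic N soliton like solution classique variable}. Set $R(t):=R^{hy}_{\gc^*,\ga^*+\gc^*t}$. For $t$ large, $\ga^*+\gc^*t\in\pos_N(\sigma t)$ with $\sigma:=\min_k(c_{k+1}^*-c_k^*)>0$. By Lemma~\ref{lemme: norm Lii de eta_c^* inférieur à delta} we then have $\normLii{\eta_{R(t)}}\le M_1<1$, so $R(t)\in\Nenergysethydro$.

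Take an increasing sequence $S_n\to+\ii$ and let $Q_n$ be the solution of~\eqref{NLShydro} with final datum $Q_n(S_n)=R(S_n)$, solved backward in time via Theorem~\ref{théoreme: LWP of NLShydro}; the equation is time-reversible under $(\eta,v)(t,x)\mapsto(\eta,-v)(-t,x)$. The core step is the \emph{uniform backward estimate}: there exist $T_0,\tau_0,K_0$ independent of $n$ such that $Q_n$ exists on $[T_0,S_n]$ and
\begin{equation*}
\normX{Q_n(t)-R(t)}\le K_0e^{-\tau_0 t}\quad\text{for } t\in[T_0,S_n].
\end{equation*}

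The estimate is proved by a bootstrap. Let $t^*_n$ be the smallest time in $[T_0,S_n]$ on which the bound with constant $2K_0$ holds. On $[t^*_n,S_n]$ we modulate as in Theorem~\ref{théorème: stabilité orbitale}: write $Q_n=R^{hy}_{\gc_n(t),\ga_n(t)}+\varepsilon_n$, with orthogonality conditions on $\varepsilon_n$ that kill the translation and speed directions. We then use the Weinstein-type functional from \cite{Bert2},
\begin{equation*}
\mathcal{H}(t)=E(Q_n)+\sum_{k}c_k^*\,\big(p_{\Lambda_{k+1}(t)}-p_{\Lambda_k(t)}\big)(Q_n),
\end{equation*}
built from the localized momenta $p_R$ of~\eqref{définition localized momentum}. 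Here the cut-offs $\Lambda_k(t)$ sit midway between consecutive solitons, with the convention that the outermost cut-offs are at $\mp\infty$. This functional is coercive in $\varepsilon_n$ modulo the speed directions, with coercivity constant depending only on $\gc^*$.

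The key quantities are controlled as follows.
\begin{itemize}
\item Energy and total momentum are conserved, and $\varepsilon_n(S_n)=0$.
\item The monotonicity formula, Proposition~\ref{proposition: monotonie formula}/\eqref{monotonie secondaire}, applied \emph{backward} between $t$ and $S_n$, shows that each localized momentum changes by at most $\grandOde{e^{-\tau t}}$. The soliton interactions are $\grandOde{e^{-\tau\sigma t}}$ by~\eqref{estimée décroissance exponentielle à tout ordre pour eta_c et v_c}.
\item Coercivity then gives $\normX{\varepsilon_n(t)}^2\lesssim\sum_k|c_{n,k}(t)-c_k^*|^2\cdot(\text{small})+e^{-2\tau_0t}$. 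More precisely, the speed deviations enter only at second order after using the localized momentum identities. Indeed, the momentum of each soliton, together with the almost conservation of the localized momenta and the non-degeneracy $\frac{d}{dc}p(Q_c)\neq0$ granted by~\eqref{condition suffisante pour la stabilité orbitale sur f''(1)+6f'(1)>0}, gives $|\gc_n(t)-\gc^*|\lesssim\normX{\varepsilon_n}^2+e^{-\tau_0t}$.
\item Integrating~\eqref{controle uniforme parametre modulation stab orbitale} backward from $S_n$ gives $|\ga_n(t)-\ga^*-\gc^*t|\lesssim e^{-\tau_0t}$.
\end{itemize}
Combining these with Remark~\ref{définition K_lip} yields the bound with constant $K_0$, which contradicts the definition of $t^*_n$ if $T_0$ is large. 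Hence $t^*_n=T_0$. The bound $\max\eta_n<1$ follows from smallness and Lemma~\ref{lemme: norm Lii de eta_c^* inférieur à delta}, so there is no blow-up by the characterization of $T_{\max}$.

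With the uniform estimate in hand, the sequence $Q_n(T_0)$ is bounded in $\energysethydro$, and $\eta_n$ stays uniformly away from $1$. Extract a subsequence with $Q_n(T_0)\ntendfX \mathcal{Q}_0$. Let $\mathcal{Q}^{(N)}$ be the solution with $\mathcal{Q}^{(N)}(T_0)=\mathcal{Q}_0$. By the weak continuity of the flow, Proposition~\ref{proposition: weak continuity of the flow in hydrodynamical setting}, which applies thanks to the uniform bound $|\eta_n|\le M<1$, we get $Q_n(t)\ntendfX\mathcal{Q}^{(N)}(t)$ for every $t\ge T_0$. Weak lower semicontinuity of the norm then transfers the uniform estimate to $\mathcal{Q}^{(N)}$.

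For $t\ge T_0$, global existence follows from the fact that $\eta^{(N)}$ stays bounded away from $1$. Backward global existence for $t<T_0$ comes from energy conservation in the original variables: via Proposition~\ref{prop:Mohamad_3_6_f} the solution lives in $\mathcal X(\R)$, where Theorem~\ref{théorème: local global well-posedness of cauchy problem} gives a global solution. One must check that it does not vanish, or state the solution in $C^0(\R,\mathcal{NX})$ only for $t\ge T_0$ and extend as far as non-vanishing permits; this point deserves care.

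The main obstacle is the bootstrap: closing the estimate uniformly in $n$ with an exponential rate. This requires both the backward use of the monotonicity formula with cut-offs moving between solitons of distinct speeds, and the quadratic control of the speed parameters. I would first try to copy the energy--momentum argument of \cite{Bert2} on the time interval reversed, since it already establishes the coercive structure.
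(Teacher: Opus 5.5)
Your proposal follows essentially the same route as the paper: backward solutions issued from the exact chain at time $s_n$, a bootstrap combining modulation with the localized-momentum functional $\mathcal{G}$ (coercivity from \cite{Bert2}, backward monotonicity with the Abel rearrangement), then weak compactness, weak continuity of the flow and lower semicontinuity; the paper gives no more argument than you do for non-vanishing before $s_0$. The one variation is the speed control: the paper integrates $|\gc_n'|\lesssim\normX{\varepsilon^n}^2+e^{-\tau t}$ backward from $\gc_n(s_n)=\gc^*$ and absorbs by a Gronwall-type argument, whereas your pointwise bound $|\gc_n(t)-\gc^*|\lesssim\normX{\varepsilon_n(t)}^2+e^{-\tau t}$ is also valid but, since the monotonicity formula is only one-sided, it must come from combining these one-sided bounds with energy conservation through an Abel summation with positive weights $c_j^*-c_{j-1}^*$ (with $c_0^*:=0$), not from a two-sided ``almost conservation'' of each localized momentum.
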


We deduce directly the existence of the asymptotic $N$-soliton in the original setting, by applying Proposition~\ref{prop:Mohamad_3_6_f}.

\begin{proof}[Proof of Theorem~\ref{théorème: existence d'un asymptotic N soliton like solution classique variable}]
From the existence of an asymptotic $N$-soliton in the hydrodynamical variables stated in Proposition~\ref{théorème: existence d'un asymptotic N soliton like solution}, we construct a solution of~\eqref{NLS} by setting 
\begin{equation*}
    \mathcal{U}^{(N)}(t):=\Phi\big(\mathcal{Q}^{(N)}(t),\Theta(t)\big),
\end{equation*}
where $\Phi$ and $\Theta$ are defined in Proposition~\ref{prop:Mohamad_1_1_f} and \ref{prop:Mohamad_3_6_f}. Furthermore, we define the map $\widetilde{\Phi}:\mathcal{NX}_{hy}(\R)\longrightarrow \mathcal{NX}(\R)$ by $\widetilde{\Phi}(\eta,v):=\sqrt{1-\eta}\ e^{-i\int_0^xv}.$

By the one-dimensional Sobolev inequality we can assume, up to increasing the value of $T_0$, that for $t\geq T_0,\ \ga^*+\gc^*t\in\pos_N(l_1)$ and by Proposition~\ref{théorème: existence d'un asymptotic N soliton like solution}, $$\normLii{\eta^{(N)}(t)-\eta_{\gc^*,\ga^*+\gc^*t}}\lesssim \normX{\mathcal{Q}^{(N)}(t)-R_{\gc^*,\ga^*+\gc^*t}^{hy}}< 1 - M_1,$$
with $l_1$ and $M_1$ from Lemma~\ref{lemme: norm Lii de eta_c^* inférieur à delta}. Thus, by the triangular inequality, we derive $\normLii{\eta^{(N)}(t)} < 1$ for any $t\geq T_0$. As a consequence, 
\begin{equation}
    \inf_{(t,x)\in[T_0,+\ii)\times \R}|\mathcal{U}^{(N)}(t,x)| =m_2>0.
\end{equation}

We can also assume that for any $t\geq T_0, \ga^* + \gc^*t\in\pos_N(l_2)$, so that we can apply Lemma~\ref{lemme: équivalence des distances classique et hydro proche d'une chaine de soliton} with $\ga=\ga^* + \gc^*t,u=\widetilde{\Phi}\big(\mathcal{Q}(t)\big)=\mathcal{U}^{(N)}(t)e^{-i\Theta(t)}$ the phase of which vanishes at $x=0$, as well as the phase of $\widetilde{\Phi}(R^{hy}_{\gc^*,\ga^*+\gc^*t})$. Therefore 
\begin{equation*}
    d\left(\widetilde{\Phi}\big(\mathcal{Q}(t)\big),\widetilde{\Phi}(R_{\gc^*,\ga^* + \gc^*t}^{hy})\right)\leq A_*|\ga^* + \gc^*t|\normX{\mathcal{Q}(t) -R_{\gc^*,\ga^* + \gc^*t}^{hy} }\leq A_*K_0|\ga^* + \gc^*t|e^{-\tau_0t}.
\end{equation*}

Hence, there exists $C_*>0$ such that
\begin{align*}
    d\big(\mathcal{U}^{(N)}(t),R^{or}_{\gc^*,\ga^* + \gc^*t ,\Theta(t)}\big)& = d\big(\mathcal{U}^{(N)}(t)e^{-i\Theta(t)},R^{or}_{\gc^*,\ga^* + \gc^*t ,0}\big) = d\left(\widetilde{\Phi}\big(\mathcal{Q}(t)\big),\widetilde{\Phi}(R_{\gc^*,\ga^* + \gc^*t}^{hy})\right)\\
    &\leq A_* (|\gc^*|t + |\ga^*|) \normX{\mathcal{Q}(t) -R_{\gc^*,\ga^* + \gc^*t}^{hy} } \leq  C_*  e^{-\frac{\tau_*}{2} t}\ttendii 0.
\end{align*}

\end{proof}

\subsection{Reduction of the problem}

We now proceed to the proof of Proposition~\ref{théorème: existence d'un asymptotic N soliton like solution}. Take a sequence $(s_n)$ of time tending to $+\ii$. By Theorem~\ref{théoreme: LWP of NLShydro}, we define for any $n$ the unique solution $Q^n\in C^0\big((s_n-T_{\max,n},s_n+T_{\max,n}),\Nenergysethydro\big)$ of~\eqref{NLShydro} with initial data 
\begin{equation*}
    Q^n(s_n,x)=R^{hy}_{\gc^*,\ga^*+\gc^
* s_n}(x)=\sum_{k=1}^N Q_{c_k^*}(x-c_k^*s_n-a_k^*).
\end{equation*}

Up to rearranging the speeds, we can always assume that $c_1^* < ... <c_N^*$ and then define
\begin{equation}\label{définition sigma_*}
    \sigma_0:=\min\left\{ c_{k+1}^*-c_k^*\ \big| \ k\in\{ 1,...,N-1\}\right\}>0.
\end{equation}
    
We shall prove that there exists a time $s_0$ such that sequence of solutions $\big(Q^n(t)\big)_n$ stays uniformly close to $R^{hy}_{\gc^*,\ga^*+\gc^*t}$ with respect to the time variable $t\in [s_0,s_n]$ and the variable $n$ (taken large enough so that $s_n\geq s_0$). This property is the point of the following proposition.

\begin{prop}\label{proposition: controle uniforme en n de Q^n moins chaine de solitons}
    There exist $s_0\in\R$ and $K_0,\tau_0>0$, such that for any integer $n$ such that $s_n\geq s_0$, the solution $Q^n$ to~\eqref{NLShydro} with prescribed data $Q^n(s_n)=R^{hy}_{\gc^*,\ga^* + \gc^*s_n}$ is in $C^0\big([s_0,s_n],\mathcal{NX}_{hy}(\R)\big)$ and for any $t\in [s_0,s_n]$, we have
    \begin{equation*}
        \Big\Vert Q^n(t)-R^{hy}_{\gc^*,\ga^*+\gc^* t}\Big\Vert_{\mathcal{X}_{hy}}\leq K_0 e^{-\tau_0 t}.
    \end{equation*}
\end{prop}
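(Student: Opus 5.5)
The plan is the standard backward-in-time bootstrap of Martel / Martel--Merle, run in the hydrodynamical variables. Fix $K_0,\tau_0$ (to be chosen) and, for $n$ with $s_n\ge s_0$, let
\[
t_n^*:=\inf\Big\{t\in[s_0,s_n]\ \Big|\ Q^n \text{ is defined on }[t,s_n]\text{ and }\normX{Q^n(s)-R^{hy}_{\gc^*,\ga^*+\gc^*s}}\le K_0e^{-\tau_0 s}\ \forall s\in[t,s_n]\Big\}.
\]
At $s_n$ the difference vanishes, so by continuity of the flow (Theorem~\ref{théoreme: LWP of NLShydro}) we get $t_n^*<s_n$. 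Note that on $[t_n^*,s_n]$ we have $\normLii{\eta^n}\le M_1+CK_0e^{-\tau_0 s_0}<1$, by Lemma~\ref{lemme: norm Lii de eta_c^* inférieur à delta} and Sobolev, and $\ga^*+\gc^*s\in\pos_N(\sigma_0 s/2)$ once $s_0$ is large. So the solution stays in $\Nenergysethydro$ and cannot blow up there. The goal is to show that the bootstrap bound improves to $\frac{K_0}{2}e^{-\tau_0 t}$ on $[t_n^*,s_n]$; continuity then forces $t_n^*=s_0$.

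\textbf{Step 1 (modulation).} On $[t_n^*,s_n]$, I would apply the modulation underlying Theorem~\ref{théorème: stabilité orbitale} (implicit function theorem around the chain), now backwards in time. This writes
\[
Q^n(t)=R^{hy}_{\gc(t),\ga(t)}+\varepsilon(t),
\]
with the orthogonality conditions of~\cite{Bert2}, $\gc(s_n)=\gc^*$, $\ga(s_n)=\ga^*+\gc^*s_n$, $\varepsilon(s_n)=0$. The modulation equations~\eqref{controle uniforme parametre modulation stab orbitale} should then take the refined form
\[
|\ga'-\gc|+|\gc'|\lesssim\normX{\varepsilon}+e^{-\tau\sigma_0 t}.
\]
For $\gc'$ one expects the sharper bound $|\gc'|\lesssim\normX{\varepsilon}^2+e^{-\tau\sigma_0t}$. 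Here the interaction error $e^{-\tau\sigma_0 t}$ comes from the exponential decay~\eqref{estimée décroissance exponentielle à tout ordre pour eta_c et v_c} together with the separation $\ga(t)\in\pos_N(\sigma_0t/2)$.

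\textbf{Step 2 (energy/Weinstein functional, the main obstacle).} I would use the localized functional
\[
\mathcal{G}(t)=E(Q^n)+\sum_k c_k^*\,\big(p_{\Phi_k}\text{-localized momenta}\big),
\]
with $\Phi_k$ cutoffs built from $\chi$ centered at the midpoints between consecutive solitons, exactly as in the proof of orbital stability in~\cite{Bert2}. This functional has two properties.

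(i) It is coercive in $\varepsilon$ modulo the $\gc$ directions: $\mathcal{G}(t)-\mathcal{G}_{\rm chain}(\gc(t))\ge \kappa\normX{\varepsilon}^2-C|\gc-\gc^*|^2-Ce^{-\tau\sigma_0 t}$. This uses the coercivity of each linearized operator under the orthogonality conditions, which relies on~\eqref{condition suffisante pour la stabilité orbitale sur f''(1)+6f'(1)>0}.

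(ii) It is almost conserved. Energy conservation holds, and each localized momentum obeys the monotonicity of Remark~\ref{remarque: relaxation des hypotheses pour monotonie}, namely $\frac{d}{dt}p\ge -Ce^{-\tau t}$. Since we integrate backward from $s_n$, where all errors vanish, this gives $|\mathcal{G}(t)-\mathcal{G}(s_n)|\lesssim e^{-\tau\sigma_0 t}$. The direction of the monotonicity must be checked carefully: integrating from $t$ to $s_n$, the sign must be the favorable one, which is the reason for ordering $c_1^*<\dots<c_N^*$.

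The delicate point is controlling $|\gc(t)-\gc^*|$ \emph{linearly}, rather than only quadratically as the energy argument would give. For this I would use the localized momenta directly. Each $p_{\Phi_k}$ is almost monotone, with value $p(Q_{c_k})$ at $s_n$ up to errors, and $c\mapsto p(Q_c)$ has nonzero derivative on the branch (this is also where~\eqref{condition suffisante pour la stabilité orbitale sur f''(1)+6f'(1)>0} enters). This yields
\[
|\gc(t)-\gc^*|\lesssim\normX{\varepsilon(t)}^2+e^{-\tau\sigma_0 t}.
\]
Combining with (i) gives $\normX{\varepsilon(t)}^2\lesssim e^{-\tau\sigma_0t}+\normX{\varepsilon}^4$. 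Hence $\normX{\varepsilon(t)}\le Ce^{-\tau\sigma_0t/2}$ with $C$ independent of $K_0$, and then also $|\gc-\gc^*|\lesssim e^{-\tau\sigma_0 t}$.

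\textbf{Step 3 (closing).} Integrating $|\ga'-\gc^*|\le|\ga'-\gc|+|\gc-\gc^*|\lesssim e^{-\tau\sigma_0t/2}$ from $t$ to $s_n$ gives $|\ga(t)-\ga^*-\gc^*t|\lesssim e^{-\tau\sigma_0t/2}$. By Remark~\ref{définition K_lip},
\[
\normX{Q^n(t)-R^{hy}_{\gc^*,\ga^*+\gc^*t}}\le\normX{\varepsilon}+K_{lip}\big(|\gc-\gc^*|+|\ga-\ga^*-\gc^*t|\big)\le C_1e^{-\tau\sigma_0 t/2}.
\]
Choosing $\tau_0=\tau\sigma_0/2$ and $K_0=2C_1$ improves the bootstrap bound, provided $s_0$ is large enough that all smallness requirements ($\alpha_*$, separation $\ge l_*$, absorption of the $\normX{\varepsilon}^4$ term) hold uniformly in $n$. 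This closes the argument.
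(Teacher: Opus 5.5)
Your overall architecture is the same as the paper's: a backward bootstrap from $s_n$ with $\varepsilon(s_n)=0$, modulation, a Weinstein-type functional built from localized momenta, coercivity, then integration of the modulation equations and the $K_{lip}$ bound. However, Step~2(ii) does not hold as claimed.

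The monotonicity of Proposition~\ref{proposition: monotonie formula} and Remark~\ref{remarque: relaxation des hypotheses pour monotonie} is one-sided: the momentum $\widetilde p_k=\frac12\int\eta v\,\widetilde\chi_k$ to the right of a subsonic cut is almost nondecreasing in time. Since $\nabla E(Q_c)=c\,\nabla p(Q_c)$, the coercive functional must be $E-\sum_k c_k^*P_k$, where $P_k$ is the momentum localized near the $k$-th soliton. The Abel rearrangement then gives
\[
\mathcal{G}=E-c_1^*\,p-\sum_{k=2}^{N}(c_k^*-c_{k-1}^*)\,\widetilde p_k ,\qquad c_k^*-c_{k-1}^*>0 .
\]
Hence $\mathcal{G}$ is almost nonincreasing forward in time. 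On $[t,s_n]$ the monotonicity only yields $\mathcal{G}(t)\ge\mathcal{G}(s_n)-Ce^{-\tau t}$, whereas your coercivity step (i) needs $\mathcal{G}(t)\le\mathcal{G}(s_n)+Ce^{-\tau t}$. The ordering $c_1^*<\dots<c_N^*$ makes the sign favorable forward in time, as in the orbital stability of~\cite{Bert2}, and unfavorable in your backward problem. Moreover, no two-sided bound $|\mathcal{G}(t)-\mathcal{G}(s_n)|\lesssim e^{-\tau\sigma_0 t}$ can come from a one-sided inequality.

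Your linear control of $|\gc-\gc^*|$ does not repair this. First, it also needs energy conservation, because monotonicity alone only bounds the partial sums $\sum_{j\ge k}\big(p(Q_{c_j(t)})-p(Q_{c_j^*})\big)$ from one side. Second, even once established, it leaves the coercive quadratic form equal, up to admissible errors, to $\mathcal{G}(t)-\mathcal{G}(s_n)$, for which only the lower bound is available.

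The paper's own argument passes through exactly the same point, so following it does not help. Its claim giving the upper control of $\mathcal{G}$ is stated as a bound on $\mathcal{G}(Q^n(s_n))-\mathcal{G}(Q^n(t))$. The rearranged identity it uses to reach the reverse bound has, as far as I can check from the definitions of $\mathcal{G}$, $q_k$ and $\widetilde\chi_k$, the opposite sign.

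A standard way to close without monotonicity is as follows.
\begin{itemize}
\item Use cut-offs $\phi\big((x-m_kt)/\sqrt t\big)$ with $m_k=\frac{c_k^*+c_{k+1}^*}{2}$ and $\phi'$ compactly supported. The transition layers then stay at distance $\gtrsim\sigma_0t$ from the solitons.
\item Every term of $\frac{d}{dt}\mathcal{G}$ carries a derivative of the cut-off. This gives the two-sided bound $\big|\frac{d}{dt}\mathcal{G}(t)\big|\lesssim t^{-1/2}\normX{\varepsilon(t)}^2+e^{-\gamma t}$ for some $\gamma>0$ depending on $\gc^*$.
\item Integrating on $[t,s_n]$ under your bootstrap hypothesis gives $|\mathcal{G}(t)-\mathcal{G}(s_n)|\lesssim K_0^2\tau_0^{-1}s_0^{-1/2}e^{-2\tau_0t}+e^{-\gamma t}$.
\item Choosing $s_0$ after $K_0$ produces a bound on $\normX{\varepsilon(t)}$ with a constant independent of $K_0$.
\end{itemize}
Your Step~3 then closes. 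You can control $|\gc-\gc^*|$, for instance, by integrating $|\gc'|\lesssim\normX{\varepsilon}^2+e^{-\tau\sigma_0t}$ backward from $\gc(s_n)=\gc^*$, as the paper does.
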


Now, we admit that Proposition~\ref{proposition: controle uniforme en n de Q^n moins chaine de solitons} holds true and derive a proof of Proposition~\ref{théorème: existence d'un asymptotic N soliton like solution}. We refer to Subsection~\ref{section: Proof of the uniform estimate} for the proof of Proposition~\ref{proposition: controle uniforme en n de Q^n moins chaine de solitons}.

\begin{proof}[Proof of Proposition~\ref{théorème: existence d'un asymptotic N soliton like solution}]
The integer $n$ is fixed such that $s_n\geq s_0$ here. Proposition~\ref{proposition: controle uniforme en n de Q^n moins chaine de solitons} implies that $\big(Q^n(s_0)\big)$ is bounded in $\mathcal{X}_{hy}(\R)$ and there exists a limit profile $\mathcal{Q}_0^{(N)}\in \mathcal{X}(\R)$ such that, up to a subsequence,
\begin{equation*}
    Q^n(s_0)\ntendfX \mathcal{Q}_0^{(N)}.
\end{equation*}

By Theorem~\ref{théoreme: LWP of NLShydro}, we define $\QN=(\eta^{(N)},v^{(N)})\in C^0\big((s_0-T,s_0+T),\Nenergysethydro\big)$ the solution associated with the initial data $\mathcal{Q}^{(N)}(s_0,x)=\QN_0(x)=\big(\eta^{(N)}_0(x),v^{(N)}_0(x)\big)$. 

We write $\ga=\ga^* + \gc^* t$. Up to increasing the value of $s_0$, we have, 
\begin{equation}\label{b_k+1(t) -b_k(t) geq L_0}
    a_{k+1}(t)-a_k(t)\geq\sigma_0 t + a_{k+1}^*-a_k^*:=L_0\geq l_2.
\end{equation}

Using Lemma~\ref{lemme: norm Lii de eta_c^* inférieur à delta} with $\ga\in \pos_N(l_2)$ and Proposition~\ref{proposition: controle uniforme en n de Q^n moins chaine de solitons}, that for any $t\in (s_0-T,s_0+T)\cap [s_0,s_n]$,
\begin{align*}
    \normLii{\eta_n(t)}\leq \normLii{\eta_n(t)-\eta_{\gc^*,\ga^*+\gc^*t}} + \normLii{\eta_{\gc^*,\ga^*+\gc^*t}}<1.
\end{align*}

Thus, we obtain that $Q_n$ is globally defined by Theorem~\ref{théoreme: LWP of NLShydro} and using Proposition~\ref{proposition: weak continuity of the flow in hydrodynamical setting}, we have for any $t\in (s_0-T,s_0+T)$,
\begin{equation*}
    Q^n(t)\ntendfX \mathcal{Q}^{(N)}(t).
\end{equation*}

Therefore, we can also infer that
\begin{equation*}
    Q^n(t)-R^{hy}_{\gc^*,\ga^*+\gc^*t}\ntendfX \mathcal{Q}^{(N)}(t)-R^{hy}_{\gc^*,\ga^*+\gc^*t},
\end{equation*}

hence, by Proposition~\ref{proposition: controle uniforme en n de Q^n moins chaine de solitons}, on $(s_0-T,s_0+T)\cap [s_0,s_n]$,
\begin{equation*}
    \normX{\mathcal{Q}^{(N)}(t)-R^{hy}_{\gc^*,\ga^*+\gc^*t}}\leq \liminf_{n\rightarrow +\ii}\normX{Q^n(t)-R^{hy}_{\gc^*,\ga^*+\gc^*t}}\leq K_0e^{-\tau_0 t}.
\end{equation*}

Thus, up to increasing the value of $s_0$ once more, we can infer similarly that $\mathcal{Q}^{(N)}$ is globally defined, and thus we have constructed an asymptotic $N$-soliton in the hydrodynamical framework.

\end{proof}

\subsection{Proof of the uniform estimate}\label{section: Proof of the uniform estimate}

To prove Proposition~\ref{proposition: controle uniforme en n de Q^n moins chaine de solitons}, we shall implement a continuity method. Relying on localized quantities around the solitons, we prove suitable estimates to draw the argument to a close. This reduction of Proposition~\ref{proposition: controle uniforme en n de Q^n moins chaine de solitons} can be summarized in the following result.

\begin{lem}\label{proposition: reduction proposition uniform estimates}
     There exist $s_0>0$ and $K_0,\tau_0,\alpha_0>0$ satisfying $K_0e^{-\tau_0s_0}\leq\frac{\alpha_0}{2}$ such that the following holds. If for a time $t^*\in [s_0,s_n]$, and any $t\in[t^*,s_n]$, we have
     \begin{equation}\label{proof of uniform estimate: inégalité à améliorer}
         \normX{Q^n(t)-R^{hy}_{\gc^*,\ga^*+\gc^*t}}\leq \alpha_0,
     \end{equation}
     
     then for $t\in[t^*,s_n]$, \begin{equation}\label{inégalité avec A_0/2}
         \normX{Q^n(t)-R^{hy}_{\gc^*,\ga^*+\gc^*t}}\leq K_0e^{-\tau_0t}.
     \end{equation}
\end{lem}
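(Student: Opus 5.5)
This is the standard Martel--Merle backward bootstrap, run on the interval $[t^*,s_n]$. At the final time $s_n$ the error vanishes exactly, and we integrate the estimates backward in time.

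\textbf{Step 1: modulation.} Under~\eqref{proof of uniform estimate: inégalité à améliorer}, with $\alpha_0$ small and $s_0$ large, we have $\ga^*+\gc^*t\in\pos_N(\sigma_0 t+\min_k(a^*_{k+1}-a^*_k))$. The implicit function theorem, as used in the proof of Theorem~\ref{théorème: stabilité orbitale} from~\cite{Bert2}, then yields $C^1$ parameters $(\gc(t),\ga(t))$ on $[t^*,s_n]$ such that $\varepsilon(t)=Q^n(t)-R^{hy}_{\gc(t),\ga(t)}$ satisfies the usual orthogonality conditions against $\partial_x Q_{c_k,a_k}$ and $\partial_c Q_{c_k,a_k}$. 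These parameters satisfy $\gc(s_n)=\gc^*$, $\ga(s_n)=\ga^*+\gc^*s_n$ and $\varepsilon(s_n)=0$. Differentiating the orthogonality relations gives
\[
|\ga'-\gc|+|\gc'|\lesssim \normX{\varepsilon}+e^{-\tau t}
\]
for some $\tau>0$, where the exponential term comes from the interactions of separated solitons, estimated via~\eqref{estimée décroissance exponentielle à tout ordre pour eta_c et v_c} and $a_{k+1}-a_k\gtrsim \sigma_0 t$.

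\textbf{Step 2: localized Weinstein functional.} Let $\Phi_k(t,x)$ be a partition of unity built from $\chi$ in~\eqref{définition fonction chi}, with the cutoffs centered at the moving midpoints $\frac{(c_k^*+c_{k+1}^*)t+a_k^*+a_{k+1}^*}{2}$. Define
\[
\mathcal{H}(t)=E(Q^n)+\sum_k c_k(t)\,p_k(Q^n),\qquad p_k(Q)=\tfrac12\int \eta v\,\Phi_k .
\]
Expanding $\mathcal{H}$ around $R^{hy}_{\gc,\ga}$ gives
\[
\mathcal{H}(t)=\sum_k\big(E+c_kp\big)(Q_{c_k})+\tfrac12\sum_k\langle H_{c_k}\varepsilon,\varepsilon\rangle_{\Phi_k}+\grandOde{\normX{\varepsilon}^3+e^{-\tau t}}.
\]
The localized coercivity of each Hessian $H_{c_k}$ under the orthogonality conditions, as in~\cite{Bert2}, which requires~\eqref{condition suffisante pour la stabilité orbitale sur f''(1)+6f'(1)>0}, gives the bound $\gtrsim\normX{\varepsilon}^2-\sum_k|\langle\varepsilon,Q_{c_k,a_k}\rangle|^2$.

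\textbf{Step 3: time variation.} Energy is conserved. By the monotonicity formula of Proposition~\ref{proposition: monotonie formula}, more precisely its between-solitons version in Remark~\ref{remarque: relaxation des hypotheses pour monotonie}, each $\sum_{l\ge k}p_l$ is almost monotone, with error $\grandOde{e^{-\tau t}}$. Integrating from $t$ to $s_n$ and using the initial data gives $\mathcal{H}(t)-\mathcal{H}(s_n)\gtrsim -e^{-\tau t}$, up to the terms $c_k'(t)\,p_k$. To handle those terms I will instead write $\sum_k c_k p_k=\sum_k (c_k-c_{k-1})\sum_{l\ge k}p_l$, freezing the speeds at $\gc^*$ and paying a cost of $|\gc(t)-\gc^*|\cdot$(first-order terms). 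Combined with the expansion and $\mathcal{H}(s_n)=\sum_k(E+c_k^*p)(Q_{c_k^*})$, this yields
\[
\normX{\varepsilon(t)}^2\lesssim e^{-\tau t}+\sum_k|p(Q_{c_k(t)})-p(Q_{c_k^*})|^2+\sum_k|\langle\varepsilon,Q_{c_k,a_k}\rangle|^2+\normX{\varepsilon}^3.
\]
Conservation of the localized momenta up to exponentially small errors, which again follows from the monotonicity formula applied in both directions, together with $\frac{d}{dc}p(Q_c)\neq0$, gives $|\gc(t)-\gc^*|\lesssim\normX{\varepsilon}^2+e^{-\tau t}$. This closes the bound $\normX{\varepsilon(t)}\lesssim e^{-\tau t/2}$.

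\textbf{Step 4: conclusion.} Integrating $|\ga'-\gc^*|\lesssim e^{-\tau t/2}$ backward from $s_n$ gives $|\ga(t)-\ga^*-\gc^*t|\lesssim e^{-\tau t/2}$. Remark~\ref{définition K_lip} then gives~\eqref{inégalité avec A_0/2} with $\tau_0=\tau/2$ and a $K_0$ independent of $n$. Finally, $s_0$ is chosen so that $K_0e^{-\tau_0s_0}\le\alpha_0/2$.

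The main obstacle is Step 3. One has to control the parameter drift without losing the exponential rate, which means handling the linear term in $\varepsilon$ and the direction $\partial_c Q_c$ through the almost-conservation of the localized momenta, rather than through the cruder estimate~\eqref{controle uniforme parametre modulation stab orbitale}. That estimate only gives $|\ga'-\gc|^2\lesssim\normX{\varepsilon}^2$, and I expect it to be insufficient on its own for this purpose.
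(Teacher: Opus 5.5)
\paragraph{Gap 1: the monotonicity runs the wrong way backward in time.} Your Step~3 has a genuine gap. With the paper's conventions $v_c=c\eta_c/(2(1-\eta_c))$, so $\nabla E(Q_c)=c\,\nabla p(Q_c)$. The coercive frozen-speed functional is therefore $E-\sum_k c_k^*\times(\text{localized momentum})_k$, i.e.\ the paper's $\mathcal{G}$; your $+$ sign is off.

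An Abel rearrangement writes this functional as $E$, minus a multiple of the conserved total momentum, minus $\sum_k(c_{k+1}^*-c_k^*)\widetilde p_k$, with positive weights. By Remark~\ref{remarque: relaxation des hypotheses pour monotonie}, each $\widetilde p_k$ is almost \emph{increasing}, so the functional is almost decreasing. Integrating over $[t,s_n]$ gives exactly what you wrote, $\mathcal{H}(t)-\mathcal{H}(s_n)\gtrsim -e^{-\tau t}$. That is a lower bound, and so is coercivity. Two lower bounds give no upper bound on $\normX{\varepsilon(t)}^2$; the backward bootstrap needs $\mathcal{H}(t)-\mathcal{H}(s_n)\lesssim e^{-\tau t}$. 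A one-sided monotonicity formula cannot supply this backward in time. Nor can it be ``applied in both directions'' to give conservation of the $\widetilde p_k$ up to exponentially small errors.

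The paper's proof has the same hole. Its second claim is stated in the direction monotonicity actually gives, $\mathcal{G}(Q^n(s_n))-\mathcal{G}(Q^n(t))\le K_2e^{-\tau_2 t}$. The combination with the coercivity claim, however, uses the reverse inequality. That reverse inequality comes from an Abel identity whose overall sign is wrong, as one checks by recomputing from $\mathcal{G}=E-\sum_k c_k^*q_k$ with increasing cut-offs.

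One way to close the argument is to give up the sign and use two-sided almost-conservation:
$\big|\tfrac{d}{dt}\widetilde p_k\big|\lesssim\|\widetilde\chi_k'\|_{L^\infty}\normX{\varepsilon}^2+e^{-\gamma t}$. Choose cut-offs, for instance transitions of large width with compactly supported derivative, for which $\|\widetilde\chi_k'\|_{L^\infty}$ is small independently of the interaction rate $\gamma$. The flux term integrated over $[t,s_n]$ is then absorbed by a backward Gronwall argument. With $\tanh$ profiles of slope $\widetilde\tau$ this needs care, since the interaction rate is then itself of order $\widetilde\tau$.

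\paragraph{Gap 2: the speed control.} With your orthogonality $\varepsilon\perp\partial_cQ_{c_k,a_k}$, the expansion of $\widetilde p_k$ keeps the linear term $\langle\nabla p(Q_{c_k}),\varepsilon\rangle$. Even exact conservation then only gives
$p(Q_{c_k(t)})-p(Q_{c_k^*})+\langle\nabla p(Q_{c_k}),\varepsilon\rangle=\grandOde{\normX{\varepsilon}^2+e^{-\tau t}}$.
That is one relation for two unknowns, so $|\gc(t)-\gc^*|\lesssim\normX{\varepsilon}^2$ does not follow. Your linear bound $|\gc'|\lesssim\normX{\varepsilon}+e^{-\tau t}$, integrated from $t$ to $s_n$, loses a factor $1/\tau$ and does not close either.

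The paper instead imposes $\psLdeuxLdeux{\varepsilon^n}{\nabla p(Q_{c_{k,n}})}=0$. With $H_c=E''(Q_c)-c\,p''(Q_c)$ one has $\langle JH_c\varepsilon,\nabla p(Q_c)\rangle=\langle\varepsilon,H_c\partial_xQ_c\rangle=0$. So the linear term drops out of the equation for $c_k'$, which gives the quadratic bound \eqref{controle des parametres de modulation n}. Integrate it backward from $\gc_n(s_n)=\gc^*$ and absorb the $|\gc_n-\gc^*|^2$ contribution coming from coercivity; this yields $|\gc_n(t)-\gc^*|\lesssim e^{-\tau_2 t}$.

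Your expectation that \eqref{controle uniforme parametre modulation stab orbitale} is insufficient is therefore mistaken. Its quadratic control of $\gc'$ is exactly the mechanism that handles the speed drift. The weaker control of $\gb_n'-\gc_n$ suffices for the positions once $\normX{\varepsilon^n}\lesssim e^{-\tau_2 t/2}$.
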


Now, we proceed to the proof of Lemma~\ref{proposition: reduction proposition uniform estimates}.

\begin{proof}[Proof of Lemma~\ref{proposition: reduction proposition uniform estimates}]

Assume that for $t\in[t^*,s_n]$,~\eqref{proof of uniform estimate: inégalité à améliorer} holds. Provided that $\alpha_0\leq \beta_*$ and $s_0$ is large enough so that for any $t\in [t^*,s_n],\ga(t):=\big(a_1(t),...,a_N(t)\big)=\gc^* t +\ga^*\in\pos_N(l_2)$. Therefore, we can construct two modulation parameters $\gc_n(t)=\big(c_{1,n}(t),...,c_{N,n}(t)\big),\gb_n(t)=\big(b_{1,n}(t),...,b_{N,n}(t)\big)\in\pos_N(L_0-1)$ such that for $t\in [s_0,s_n]$,
\begin{equation}\label{orthogonal decomposition en n}
    Q^n(t,x)=\sum_{k=1}^N Q_{c_{k,n}(t)}\big(x-b_{k,n}(t)\big)+\varepsilon^n(t,x),
\end{equation}
and for $k\in\{1,...,N\}$,
\begin{equation}
    \psLdeuxLdeux{\varepsilon^{n}(t)}{\partial_x Q_{c_{k,n}(t)}}=\grad p (Q_{c_{k,n}(t)}).\varepsilon^n(t)=0.
\end{equation}

Moreover, by~\eqref{proof of uniform estimate: inégalité à améliorer} and the fact that $\ga\in\pos_N(l_2)$, we have
\begin{equation}\label{controle que je veux}
    \normX{\varepsilon^n(t)}+\normR{\gc_n(t)-\gc^*}\leq A_*K(\alpha_0,L_0),
\end{equation}
and
\begin{equation}\label{controle des parametres de modulation n}
    \normR{\gb_n'(t)-\gc_n(t)}^2+\normR{\gc_n'(t)}\leq A_*\Big(\normX{\varepsilon^n(t)}^2 + e^{-\tau_*L_0} \Big).
\end{equation}

By uniqueness of the modulation parameters, and in view of the choice $Q^n(s_n)=R^{hy}_{\gc^*,\ga^*+\gc^*s_n}$, we obtain 
\begin{equation}\label{modulation parameters en s_n}
    c_{k,n}(s_n)=c_k^*,\quad a_{k,n}(s_n)=0,\text{ and }\varepsilon^n(s_n)=0.
\end{equation}

To prove~\eqref{proof of uniform estimate: inégalité à améliorer}, we rely on the introduction of the functional
\begin{equation}
    \mathcal{G}(Q):=E(Q)-\sum_{k=1}^N c_k^* q_k(Q), 
\end{equation}
where \begin{equation}\label{définition momentum localisé pour construction}
    q_k\big(Q\big)=q_k( \eta,v)=\int_\R \eta v(\widetilde{\chi}_{k}-\widetilde{\chi}_{k+1}).
\end{equation}

Here we used the following partition of unity. Let $\chi$ be the cut-off function defined in~\eqref{définition fonction chi}, and set
\begin{equation*}
    \widetilde{\chi}_{k}(x)=\left\{
\begin{array}{l}
    1\hspace{28mm}\text{if }k=1, \\
    \chi\big(\widetilde{\tau}(x-X_k(t)\big)\quad\text{if }k\in\{2,...,N\},\quad\text{with }X_k(t):=\dfrac{b_{k+1,n}(t)+b_{k,n}(t)}{2}, \\
    0\hspace{28mm}\text{if }k=N+1.\\
\end{array}
\right.
\end{equation*}

We also define 
\begin{equation*}\label{définition de p_k}
    \widetilde{p}_k(Q):=\widetilde{p}_k(\eta,v):=\dfrac{1}{2}\int_\R \eta v\widetilde{\chi}_k,
\end{equation*}

for some $\widetilde{\tau}>0$, the value of which is fixed later in Claim~\ref{controle de la dérivée de G(t) par exponentielle décroissante}. On the one hand, by construction of the modulation parameters, we obtain some coercivity property for the functional $\mathcal{G}$.
\begin{claim}\label{coercivité pour fonctionnelle de Lyapunov}
    There exist $s_1,\tau_1,K_1,\alpha_1>0$ such that if $s_0\geq s_1,\alpha_0\leq\alpha_1$ and $\widetilde{\tau}+\tau_0\leq \tau_1$, then for $t\in [t^*,s_n]$, we have
\begin{equation*}
    \normX{\varepsilon^n(t)}^2\leq K_1\Big(\mathcal{G}\big(Q^n(t)\big)-\mathcal{G}\big(Q^n(s_n)\big) +\normR{\gc_n(t)-\gc^*}^2+ e^{-\tau_1 t}\Big).
\end{equation*}
\end{claim}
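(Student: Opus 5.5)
We expand $\mathcal{G}(Q^n(t))$ to second order around the modulated chain $R_n(t):=\sum_k Q_{c_{k,n}(t)}(\cdot-b_{k,n}(t))$, and compare it with the value of $\mathcal{G}$ at the chain with frozen speeds $\gc^*$, which equals $\mathcal{G}(Q^n(s_n))$ up to exponentially small errors.

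\emph{Step 1: expansion of $\mathcal{G}(Q^n(t))$.} Write $Q^n(t)=R_n(t)+\varepsilon^n(t)$ using~\eqref{orthogonal decomposition en n}. Since $E$ is $C^2$ near $R_n$ (as $\normLii{\eta}$ stays below $1$ by Lemma~\ref{lemme: norm Lii de eta_c^* inférieur à delta} and~\eqref{controle que je veux}) and $q_k$ is quadratic, we have
\begin{equation*}
\mathcal{G}(Q^n)=\mathcal{G}(R_n)+\langle \nabla\mathcal{G}(R_n),\varepsilon^n\rangle+\tfrac12\langle \mathcal{G}''(R_n)\varepsilon^n,\varepsilon^n\rangle+o\big(\normX{\varepsilon^n}^2\big).
\end{equation*}
Near $b_{k,n}$ the weight $\widetilde\chi_k-\widetilde\chi_{k+1}$ equals $1$ up to $\mathcal{O}(e^{-\widetilde\tau \sigma_0 t/4})$, because the centres satisfy $\gb_n\in\pos_N(L_0-1)$ with $L_0\gtrsim\sigma_0 t$. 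Combining this with the exponential decay~\eqref{estimée décroissance exponentielle à tout ordre pour eta_c et v_c}, the linear term reduces to
\begin{equation*}
\sum_k\langle E'(Q_{c_{k,n}})-c_k^*p'(Q_{c_{k,n}}),\varepsilon^n\rangle+\mathcal{O}(e^{-\tau t}\normX{\varepsilon^n}).
\end{equation*}
Since $Q_{c}$ solves $E'(Q_c)=c\,p'(Q_c)$, this equals
\begin{equation*}
\sum_k (c_{k,n}-c_k^*)\langle p'(Q_{c_{k,n}}),\varepsilon^n\rangle+\mathcal{O}(e^{-\tau t}\normX{\varepsilon^n}),
\end{equation*}
and Young's inequality bounds it by $\delta\normX{\varepsilon^n}^2+C_\delta|\gc_n-\gc^*|^2+e^{-2\tau t}$. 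In the quadratic term we may replace $c_k^*$ by $c_{k,n}$ at cost $\mathcal{O}(|\gc_n-\gc^*|\normX{\varepsilon^n}^2)$.

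\emph{Step 2: localized coercivity.} We use a partition of unity adapted to the solitons and split the quadratic form into pieces localized near each $b_{k,n}$, plus the regions between them. On the $k$-th piece the form is $\langle (E''-c_{k,n}p'')(Q_{c_{k,n}})\varepsilon,\varepsilon\rangle$. Under the orthogonality $\langle\varepsilon^n,\partial_x Q_{c_{k,n}}\rangle=0$ and the second orthogonality fixed by the choice of $c_{k,n}$ (the one used in~\cite{Bert2}, which makes the decomposition unique), this form is coercive. This is the single-soliton coercivity under~\eqref{condition suffisante pour la stabilité orbitale sur f''(1)+6f'(1)>0} from~\cite{Bert2}, already used to prove Theorem~\ref{théorème: stabilité orbitale}. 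Between the solitons, $\eta$ is small and the weight $c_k^*$ appearing there is $<c_s$, so the form is bounded below by $\kappa(\eta_x^2+\eta^2+v^2)$, by the same computation as Claim~\ref{coercivité dans la forme quad dans la monotonie du moment}. The localization errors are $\mathcal{O}(\widetilde\tau+\tau_0)\normX{\varepsilon^n}^2$ from the derivatives of the cutoffs, which is why we need $\widetilde\tau+\tau_0\le\tau_1$. Absorbing these errors, together with $\delta$ and $\alpha_0$ small, we obtain
\begin{equation*}
\mathcal{G}(Q^n)-\mathcal{G}(R_n)\ge \kappa\normX{\varepsilon^n}^2-C|\gc_n-\gc^*|^2-Ce^{-\tau_1 t}.
\end{equation*}

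\emph{Step 3: comparison at $s_n$.} By~\eqref{modulation parameters en s_n} and the exponential decoupling,
\begin{equation*}
\mathcal{G}(Q^n(s_n))=\sum_k\big(E(Q_{c_k^*})-c_k^*p(Q_{c_k^*})\big)+\mathcal{O}(e^{-\tau s_n}),
\end{equation*}
and likewise
\begin{equation*}
\mathcal{G}(R_n(t))=\sum_k\big(E(Q_{c_{k,n}})-c_k^*p(Q_{c_{k,n}})\big)+\mathcal{O}(e^{-\tau t}).
\end{equation*}
The map $c\mapsto E(Q_c)-c^*p(Q_c)$ has derivative $(c-c^*)\tfrac{d}{dc}p(Q_c)$, which vanishes at $c^*$. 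Hence
\begin{equation*}
\mathcal{G}(R_n(t))-\mathcal{G}(Q^n(s_n))=\mathcal{O}(|\gc_n-\gc^*|^2+e^{-\tau t}),
\end{equation*}
using $s_n\ge t$. Combining this with Step 2 yields the claim.

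\emph{Main obstacle.} The hard part is Step 2: the coercivity must hold uniformly in the between-soliton regions, where the cutoffs $\widetilde\chi_k$ transition. The difficulty is that the cross term coefficient $c_k^*\eta v$ there must satisfy $c_k^*<c_s$ while $\eta$ is exponentially small, and we must check that the cutoff-derivative errors close with the chosen $\widetilde\tau$.
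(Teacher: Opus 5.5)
Your proposal is correct and follows the same route as the paper. The paper simply invokes the lower and upper bounds of Corollary~1.13 in~\cite{Bert2}: a Taylor expansion around the modulated chain, coercivity under the orthogonality conditions, and a comparison of $\mathcal{G}(Q^n(s_n))$ with $\sum_k\big(E(Q_{c_k^*})-c_k^*p(Q_{c_k^*})\big)$ using $\varepsilon^n(s_n)=0$. Your Steps~1--3 unpack exactly that citation.

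Two minor remarks. First, the orthogonality $\nabla p\big(Q_{c_{k,n}}(\cdot-b_{k,n})\big)\cdot\varepsilon^n=0$ imposed in the paper makes your linear term vanish up to exponentially small errors, so the Young step is unnecessary. Second, as your own bound $\mathcal{O}(e^{-\widetilde\tau\sigma_0 t/4})$ shows, the rate $\tau_1$ must in practice be chosen $\lesssim\widetilde\tau\sigma_0$. The hypothesis $\widetilde\tau+\tau_0\le\tau_1$, as written in the paper's claim, glosses over this dependence.
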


\begin{proof}[Proof of Claim~\ref{coercivité pour fonctionnelle de Lyapunov}]
We gather the estimates from Corollary~1.13 in~\cite{Bert2}, which we adapt to the orthogonal decomposition~\eqref{orthogonal decomposition en n}. Indeed, the estimates of the latter corollary come from Taylor expansions of the energy and a modified momentum with a slightly different cut-off function, however the intrinsic computation do not change in substance and the same type of estimates can be recovered. We impose $\alpha_1$ small enough and $s_1$ large enough to be compliant with the conditions of the latter corollary. We further impose that $\tau_1$ is small enough so that it takes into account the constant $\widetilde{\tau}$ in the corresponding cut-off functions, and so that it absorbs the constants that multiply $t$ in the decreasing exponential term. 

Therefore, using the lower bound in Corollary 1.13 in~\cite{Bert2} for $\mathcal{G}\big(Q^n(t)\big)$, there exists $K_{1,1}$ such that 
\begin{equation*}
    \mathcal{G}\big(Q^n(t)\big) - \sum_{k=1}^N\big(E(Q_{c_k^*})-c_k^*p(Q_{c_k^*})\big) \geq \dfrac{1}{K_{1,1}}\normX{\varepsilon^n(t)}^2 -K_{1,1}\left( \normX{\varepsilon^n(t)}^3 + \normR{\gc_n(t)-\gc^*}^2 + e^{-\tau_1 t}\right) .
\end{equation*}

Moreover, we use the upper bound in Corollary 1.13 in~\cite{Bert2} for $\mathcal{G}\big(Q^n(s_n)\big)$, taking~\eqref{modulation parameters en s_n} into account. Thus, there exists a constant $K_{1,2}$ such that
\begin{equation*}
    \mathcal{G}\big(Q^n(s_n)\big)-\sum_{k=1}^N\big(E(Q_{c_k^*})-c_k^*p(Q_{c_k^*})\big)\leq K_{1,2}e^{-\tau_1 t}
\end{equation*}

Combining both previous estimates, and possibly reducing the value of $\alpha_1$ so that $\normX{\varepsilon^n(t)}$ is small enough, we deduce the existence of a constant $K_1$ such that for any $t\in [t^*,s_n]$,
\begin{equation}
    \normX{\varepsilon^n(t)}^2\leq K_1\Big(\mathcal{G}\big(Q(t)\big)-\mathcal{G}\big(Q(s_n)+\normR{\gc_n(t)-\gc^*}^2+e^{-\tau_1 t} \Big).   
\end{equation}

\end{proof}

On the other hand, refining the monotonicity formula obtained in Proposition~\ref{proposition: monotonie formula}, we have an upper control.
\begin{claim}\label{controle de la dérivée de G(t) par exponentielle décroissante}
    There exist $s_2\geq s_1,0<\tau_2\leq\min(\tau_1,\tau_*\sigma_*),K_2\geq K_1,0<\alpha_2\leq\alpha_1$ such that if $s_0\geq s_2,\alpha_0\leq\alpha_2$ and $\widetilde{\tau}+\tau_0\leq \tau_2$, then we have, for $t\in [t^*,s_n]$,
\begin{equation*}
    \mathcal{G}\big(Q^n(s_n)\big)-\mathcal{G}\big(Q^n(t)\big)\leq K_2 e^{-\tau_2 t}.
\end{equation*}
\end{claim}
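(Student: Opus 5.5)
The plan is the standard Martel--Merle argument. Since $E$ is conserved along the flow of~\eqref{NLShydro}, we have $\mathcal{G}(Q^n(s_n))-\mathcal{G}(Q^n(t))=\sum_{k=1}^N c_k^*\big(q_k(Q^n(t))-q_k(Q^n(s_n))\big)$. We rewrite $q_k=2(\widetilde{p}_k-\widetilde{p}_{k+1})$ and use Abel summation:
\begin{equation*}
\sum_{k=1}^N c_k^* q_k = 2\sum_{k=1}^N c_k^*(\widetilde{p}_k-\widetilde{p}_{k+1}) = 2c_1^*\widetilde{p}_1 + 2\sum_{k=2}^N (c_k^*-c_{k-1}^*)\widetilde{p}_k .
\end{equation*}
Here $\widetilde{p}_1=p$ is the full momentum, which is conserved, and $c_k^*-c_{k-1}^*\geq\sigma_0>0$. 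It therefore suffices to prove, for each $k\in\{2,\dots,N\}$ and $t\in[t^*,s_n]$,
\begin{equation*}
\widetilde{p}_k\big(Q^n(s_n)\big)-\widetilde{p}_k\big(Q^n(t)\big)\geq -Ke^{-\tau_2 t}.
\end{equation*}
This is an almost-monotonicity of the localized momentum centred at the midpoints $X_k(s)$, i.e. the situation of Remark~\ref{remarque: relaxation des hypotheses pour monotonie}. Summing these bounds with positive weights gives the claim.

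To prove the monotonicity, we differentiate $\widetilde{p}_k(Q^n(s))$ exactly as in the proof of Proposition~\ref{proposition: monotonie formula}, with $\chi$ replaced by $\chi(\widetilde{\tau}(\cdot-X_k(s)))$. The result is the integral of a density $\pi$ weighted by $\widetilde{\tau}\chi'$ and $\widetilde{\tau}^3\chi'''$, with cross-term coefficient $X_k'(s)\eta v$.

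By~\eqref{controle des parametres de modulation n} and~\eqref{controle que je veux}, $X_k'(s)=\frac{1}{2}(c_{k,n}+c_{k-1,n})+\grandOde{\alpha_0+e^{-\tau_*L_0/2}}$, so for $\alpha_0$ small and $s_0$ large we get $|X_k'|\leq\mu_*<c_s$. We then split $\R$ into a window $I_k(s)=[X_k(s)-\frac{\sigma_0 s}{4},X_k(s)+\frac{\sigma_0 s}{4}]$ and its complement. The window is admissible because $b_{k,n}-b_{k-1,n}\geq\frac{\sigma_0}{2}s$, which follows from $L_0\sim\sigma_0 s$ and the modulation bounds.

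On $I_k(s)$ every soliton lies at distance $\gtrsim s$. By~\eqref{estimée décroissance exponentielle à tout ordre pour eta_c et v_c}, $|\eta|\lesssim \normHun{\varepsilon^n}+e^{-a_d\iota_* s}$ there, so Claim~\ref{coercivité dans la forme quad dans la monotonie du moment} applies for $\widetilde{\tau}$ small and yields $\pi\geq0$. On the complement we bound $|\pi|\lesssim e(\eta,v)\,\widetilde{\tau}|\chi'|$ using~\eqref{controle des termes du moment par l'énergie}. Since $|\chi'|\lesssim e^{-\widetilde{\tau}\sigma_0 s/4}$ there, the contribution is at most $C E(Q^n) e^{-\widetilde{\tau}\sigma_0 s/4}$, where $E(Q^n)$ is bounded uniformly in $n$ by conservation of energy. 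This gives
\begin{equation*}
\frac{d}{ds}\widetilde{p}_k\big(Q^n(s)\big)\geq -Ce^{-\widetilde{\tau}\sigma_0 s/4}.
\end{equation*}
Integrating over $[t,s_n]$ yields the required bound with $\tau_2=\widetilde{\tau}\sigma_0/4$, after shrinking constants so that $\tau_2\leq\tau_1$.

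The main obstacle is the coercivity step on $I_k(s)$: it needs a uniform smallness of $\eta^n$ away from the solitons. This is exactly where the bootstrap hypothesis~\eqref{proof of uniform estimate: inégalité à améliorer} and the separation of the $b_{k,n}$ are used, together with choosing $\widetilde{\tau}$ small enough to absorb the $\widetilde{\tau}^2 C_{\ln}$ term coming from $\chi'''$. One must also check that the modulation bounds~\eqref{controle des parametres de modulation n} keep $|X_k'|$ strictly below $c_s$ uniformly in $n$ and $s\in[t^*,s_n]$. If instead one tries to control $|X_k'|$ directly from $\gc^*$, this fails for $k$ near $N$ unless $\alpha_0$ is small, so the choice $\alpha_0\leq\alpha_2$ is essential.
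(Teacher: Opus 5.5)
Your proposal is correct and is the paper's own argument: energy and total-momentum conservation, the Abel rearrangement $\sum_k c_k^*q_k=2c_1^*p+2\sum_{k\geq 2}(c_k^*-c_{k-1}^*)\widetilde p_k$, and the almost-monotonicity of the momentum to the right of the midpoints (Remark~\ref{remarque: relaxation des hypotheses pour monotonie}) integrated over $[t,s_n]$. Your signs are the consistent ones; the paper's final display has $\mathcal{G}(Q^n(t))-\mathcal{G}(Q^n(s_n))$ on the left, which is off by a sign, since monotonicity of the $\widetilde p_k$ yields exactly the stated bound on $\mathcal{G}(Q^n(s_n))-\mathcal{G}(Q^n(t))$.

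There are two small repairs to make.
\begin{itemize}
\item With only $b_{k,n}-b_{k-1,n}\geq\frac{\sigma_0}{2}s$, a window of half-width $\frac{\sigma_0 s}{4}$ around the midpoint reaches the solitons. Use instead the separation $\sigma_0 s-O(1)$ that the bootstrap provides, or take a narrower window.
\item Your rate $\widetilde\tau\sigma_0/4$ depends on $\widetilde\tau$, which does not fit the literal hypothesis $\widetilde\tau+\tau_0\leq\tau_2$. Some such dependence is unavoidable, however: the cut-off tails at the solitons already make $\widetilde p_k$ of the exact chain vary, in general, by amounts of order $e^{-c\widetilde\tau t}$. This is therefore a flaw in the statement's bookkeeping, and in the paper's $\widetilde\tau$-independent rate $\tau_{\min}$, rather than in your argument.
\end{itemize}
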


\begin{proof}[Proof of Claim~\ref{controle de la dérivée de G(t) par exponentielle décroissante}]

As explained in Remark~\ref{remarque: relaxation des hypotheses pour monotonie}, this result is reminiscent from Proposition~\ref{proposition: monotonie formula}. The momentum $\widetilde{p}_k$ that we currently use differs from the one defined in~\eqref{définition localized momentum} only by the constant $\widetilde{\tau}$. Taking $\alpha_2$ and $1/s_2$ small enough implies by~\eqref{proof of uniform estimate: inégalité à améliorer} and the fact that $\gb_n(t)\in\pos_N(L_0-1)$ that the monotonicity formula in Remark~\ref{remarque: relaxation des hypotheses pour monotonie} holds true. Then, integrating the formula in Remark~\ref{remarque: relaxation des hypotheses pour monotonie} between $t$ and $s_n$ yields
\begin{equation}\label{estimate monotonie sur p_k}
    -\Big(\widetilde{p}_k\big(Q^n(s_n)\big)-\widetilde{p}_k\big(Q^n(t)\big)\Big)\leq \dfrac{\kappa_{\min}}{\tau_{\min}}e^{-\tau_{\min}t},
\end{equation}

where the parameter $\widetilde{\tau}$ is very small compared to $\tau_{\min}$. We set $\tau_2\leq \tau_{\min}$ and $$K_2=\frac{(N-1)\kappa_{\min}\max_{k\in\{1,...,N-1\}}(c_{k+1}^*-c_k^*)}{\tau_{\min}}.$$

By proceeding to some Abel-type rearrangement and by conservation of the energy along the flow of~\eqref{NLShydro}, we infer 
\begin{equation}\label{développement de G entre Q(t) et Q(s_n)}
    \mathcal{G}\big(Q^n(t)\big)-\mathcal{G}\big(Q^n(s_n)\big)=-\sum_{k=1}^{N-1}(c_{k+1}^*-c_k^*)\Big(\widetilde{p}_k\big(Q^n(s_n)\big)-\widetilde{p}_k\big(Q^n(t)\big)\Big)\leq K_2e^{-\tau_2t}.
\end{equation}

\end{proof}

Combining Claim~\ref{controle de la dérivée de G(t) par exponentielle décroissante} and Claim~\ref{coercivité pour fonctionnelle de Lyapunov}, we derive
\begin{equation}
    \normX{\varepsilon^n(t)}^2\leq K_1(1+K_2)e^{-\tau_2t} + K_1\normR{\gc_n(t)-\gc^*}^2.
\end{equation}

In view of~\eqref{controle des parametres de modulation n}, we obtain
\begin{align*}
    |\gc_n(t)-\gc^*|&=\int^{s_n}_t|\gc_n'(s)|ds\leq A_*\int^{s_n}_t\big(\normX{\varepsilon^n(s)}^2 + e^{-\tau_*\sigma_*s}\big)ds\\
    &\leq \dfrac{A_*\big(K_1(K_2+1)+1\big)}{\tau_2}e^{-\tau_2t} + A_*K_1\int_t^{s_n}\normR{\gc_ n(s)-\gc^*}^2ds.
\end{align*}

Reducing possibly the value of $\alpha_2$ and $1/s_2$ so that $K_1A_*^2 K(\alpha_0,L_0)^2\leq 1/2$, and due to~\eqref{controle que je veux}, we infer
\begin{equation*}
    |\gc_n(t)-\gc^*|\leq 2K_3e^{-\tau_2t}\quad\text{with }K_3=\dfrac{A_*\big(K_1(K_2+1)+1\big)}{\tau_2}.
\end{equation*}

We write  $\ga_n(t)=\big(a_{1,n}(t),...,a_{N,n}(t)\big)=\gb_n(t)-\gb(t)$. Now, up to increasing the value of $s_2$, $|\gc_n(t)-\gc^*|$ stays uniformly small by~\eqref{controle que je veux}, up to a proper choice of parameters $\alpha_*$ and $L_*$, so that by definition of the constant $K_{lip}$ in Remark~\ref{définition K_lip}, we have 
\begin{align*}
    \normX{Q^n(t)-R^{hy}_{\gc^*,\ga^*+\gc^*t}}&\leq \normX{\varepsilon^n(t)} +\normX{\sum_{k=1}^N Q_{c_{k,n}(t)}\big(.-c_k^*t-a_k^*-a_{k,n}(t)\big)-R^{hy}_{\gc^*,\ga^*+\gc^*t}}\\
    &\leq \normX{\varepsilon^n(t)} + K_{lip}\big(\normR{\gc_n(t)-\gc^*}+|\ga_n(t)|\big).
\end{align*}

Using in addition~\eqref{controle des parametres de modulation n} yields
\begin{align*}
    \normR{\ga_n(t)}&=\int_t^{s_n}|\ga'_n(s)|ds \leq \int_t^{s_n}\left|\ga'_n(s)+\gb'(s)-\gc_n(s)\right|ds+\int_t^{s_n}\left|\gb'(s)-\gc_n(s)\right|ds\\
    &\leq \int_t^{s_n}\normR{\ga'_n(t)+\gc^*-\gc_n(t)} + \int_t^{s_n}\left|\gc^*-\gc_n(s)\right|ds\\
    &\leq \sqrt{\dfrac{A_*\big(K_1(K_2+1)+1\big)}{\tau_2}} e^{-\frac{\tau_2}{2}t} + \dfrac{A_*\big(K_1(K_2+1)+1\big)}{\tau_2^2}e^{-\tau_2t} .
\end{align*}

If we further impose that $\tau_0\leq \frac{\tau_2}{2}$, 
\begin{equation*}
    K_0\geq  \sqrt{K_1(1+K_2)} +  K_{lip}\left(2K_3+\sqrt{K_3} + \dfrac{K_3}{\tau_2} \right),
\end{equation*}
and $s_0$ such that $K_0e^{-\tau_0s_0}\leq\frac{\alpha_0}{2}$,
then for $t\in [t^*,s_n]$,~\eqref{inégalité avec A_0/2} holds and the lemma is proved.
\end{proof}

We conclude by the proof of Proposition~\ref{proposition: controle uniforme en n de Q^n moins chaine de solitons}.

\begin{proof}[Proof of the Proposition~\ref{proposition: controle uniforme en n de Q^n moins chaine de solitons}]
We first define 
    \begin{equation}
        t^*:= \inf J_n,\quad\text{where } J_n:=\left\{ t\in [s_0,s_n]\Big|\forall s\in [t,s_n], \normX{Q^n(s)-R^{hy}_{\gc^*,\ga^*+\gc^*s}}\leq \alpha_0\right\}.
    \end{equation}

Thus $s_n\in J_n$ and by continuity of the flow, there exists $\tau_n$ such that $[s_n-\tau_n,s_n]\subset J_n$, thus $t^*\in J_n$. Assume by contradiction that $t^*>s_0$, then by Lemma~\ref{proposition: reduction proposition uniform estimates}, for any $t\in [t^*,s_n]$
\begin{equation}
         \normX{Q^n(t)-R^{hy}_{\gc^*,\ga^*+\gc^*t}}\leq K_0 e^{-\tau_0 t}\leq K_0 e^{-\tau_0 s_0}\leq \dfrac{\alpha_0}{2}.
     \end{equation}

Once more, by continuity of the flow, there exists $t_1^*<t^*$ such that for $t\in [t_1^*,t^*]$, we have
\begin{equation}
         \normX{Q^n(t)-R^{hy}_{\gc^*,\ga^*+\gc^*t}}\leq \dfrac{3\alpha_0}{4},
     \end{equation}
     which contradicts the minimality of $t^*$. Thus $t^*=s_0$ and $ \Vert Q^n(t)-R^{hy}_{\gc^*,\ga^*+\gc^* t}\Vert_{\mathcal{X}_{hy}}\leq K_0 e^{-\tau_0 t}$ for any $t\in [s_0,s_n]$.

\end{proof}

\begin{merci}
I am very grateful to Philippe Gravejat for his continued interest in my work and for the many pieces of advice he has given me. I would also like to thank Raphaël Côte for helping me with various technical aspects and for being such a great discussion partner.
\end{merci}

\appendix

\bibliographystyle{plain}
\bibliography{bibliography}

\end{document}